\newcommand{\F}{\mathbb{F}}
\newcommand{\G}{\mathbb{G}}
\newcommand{\PP}{\mathbb{P}}
\newcommand{\Q}{\mathbb{Q}}
\newcommand{\Z}{\mathbb{Z}}
\newcommand{\eps}{\varepsilon}
\newcommand{\cZ}{\mathcal{Z}}
\newcommand{\fp}{\mathfrak{p}}
\newcommand{\OO}{\mathcal{O}}
\DeclareMathOperator{\Gal}{Gal}
\DeclareMathOperator{\Hom}{Hom}
\DeclareMathOperator{\ord}{ord}
\DeclareMathOperator{\Pic}{Pic}
\renewcommand{\setminus}{-}
\newcommand{\GL}{\operatorname{GL}}
\newtheorem{thm}{Theorem}
\newtheorem*{theorem}{Theorem}
\newtheorem*{conjecture}{Conjecture}
\newtheorem{lem}[thm]{Lemma}
\newtheorem{cor}[thm]{Corollary}
\newtheorem{prop}[thm]{Proposition}
\theoremstyle{definition}
\theoremstyle{remark}
\definecolor{darkgreen}{rgb}{0,0.5,0}
\newcommand{\baseprime}{\ell}
\newcommand{\level}{n}
\newcommand{\degree}{d}
\newcommand{\realgen}{\vartheta}
\begin{document}

\title[]{
Curves with few bad primes\\ over 
cyclotomic $\Z_\ell$-extensions
}

\begin{abstract}
Let $K$ be a number field, and $S$ a finite set of non-archimedean places of $K$, and write $\OO_S^\times$ for the group of $S$-units of $K$. A famous theorem of Siegel asserts that the $S$-unit equation $\varepsilon+\delta=1$, with $\varepsilon$, $\delta \in \OO_S^\times$, has only finitely many solutions.  A famous theorem of Shafarevich asserts that there are only finitely many isomorphism classes of elliptic curves over $K$ with good reduction outside $S$.  Now instead of a number field, let $K=\Q_{\infty,\ell}$ which denotes the $\Z_\ell$-cyclotomic extension of $\Q$.  We show that the $S$-unit equation $\varepsilon+\delta=1$, with $\varepsilon$, $\delta \in \OO_S^\times$, has infinitely many solutions for $\ell \in \{2,3,5,7\}$, where $S$ consists only of the totally ramified prime above $\ell$.  Moreover, for every prime $\ell$, we construct infinitely many elliptic or hyperelliptic curves defined over $K$ with good reduction away from $2$ and $\ell$.  For certain primes $\ell$ we show that the Jacobians of these curves in fact belong to infinitely many distinct isogeny classes. 
\end{abstract}

\author{Samir Siksek}

\address{Mathematics Institute\\
    University of Warwick\\
    CV4 7AL \\
    United Kingdom}

\email{s.siksek@warwick.ac.uk}

\author{Robin Visser}

\address{Mathematics Institute\\
    University of Warwick\\
    CV4 7AL \\
    United Kingdom}
\email{Robin.Visser@warwick.ac.uk}

\date{\today}
\thanks{
Siksek is supported by the
EPSRC grant \emph{Moduli of Elliptic curves and Classical Diophantine Problems}
(EP/S031537/1). Visser is supported by an EPSRC studentship (EP/V520226/1)}
\keywords{Shafarevich conjecture, Abelian varieties, cyclic fields, cyclotomic fields, integral points}
\subjclass[2010]{Primary 11G10, Secondary 11G05}

\maketitle

\section{Introduction}
Let $\ell$ be a rational
prime and $r$ a positive integer. 
Write $\Q_{r,\ell}$ for the unique degree $\ell^r$ totally real subfield
of $\cup_{n=1}^\infty \Q(\mu_n)$, where $\mu_n$ denotes
the set of $\ell^n$-th roots of $1$. We let
$\Q_{\infty,\ell}=\cup_r \Q_{r,\ell}$;
this is the $\Z_\ell$-cyclotomic extension of $\Q$,
and $\Q_{r,\ell}$ is called the $r$-th layer of $\Q_{\infty,\ell}$.
Now let $K$ be a number field, and write
$K_{\infty,\ell}=K \cdot \Q_{\infty,\ell}$
and $K_{r,\ell}=K \cdot \Q_{r,\ell}$.
To ease notation we shall sometimes write $K_\infty$
for $K_{\infty,\ell}$. We write
 $\OO_{\infty}$ (or $\OO_{\infty,\ell}$) 
for the integers in $K_\infty$
(i.e. the integral closure of $\Z$ in $K_\infty$), and
write $\OO_r$ (or $\OO_{r,\ell})$ for the integers of $K_{r,\ell}$.
Clearly $\OO_{\infty,\ell}=\cup_r \OO_{r,\ell}$.
The motivation for the present paper 
is a series of conjectures and theorems that suggest
that the arithmetic of curves (respectively abelian varieties)
over $K_\infty$ is similar to the arithmetic of curves (respectively
abelian varieties) over $K$.
One of these is the following conjecture of Mazur \cite{Mazur_1972},
which in essence  says that the Mordell--Weil theorem
 continues to hold over $K_\infty$.
\begin{conjecture}[Mazur]\label{conj:Mazur}
Let $A/K_\infty$ be an abelian variety.
	Then $A(K_\infty)$
is finitely generated.
\end{conjecture}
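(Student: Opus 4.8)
The plan is to reduce the statement to two assertions of rather different character: that the torsion subgroup $A(K_\infty)_{\mathrm{tors}}$ is finite, and that $A(K_\infty)\otimes_{\Z}\Q$ is a finite-dimensional $\Q$-vector space; together these are equivalent to finite generation of $A(K_\infty)$. For the torsion the point is that $K_\infty/K$ is a $\Z_\ell$-extension, and so is unramified away from the places above $\ell$. Decomposing $A(K_\infty)_{\mathrm{tors}}$ into $p$-primary parts, for $p\neq\ell$ one uses that the inertia subgroup of $\Gal(K_\infty/K)$ at a place $v\mid p$ is trivial, so any $p$-power torsion point over $K_\infty$ is already rational over an unramified extension of $K_v$; the Weil bounds on the Frobenius eigenvalues on the Tate module then cap the size of such torsion uniformly in the tower (one treats bad-reduction places by passing to the semistable model, and sees at the same time that only finitely many $p$ contribute at all). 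The $\ell$-primary part is covered by a theorem of Imai and its generalisations, which assert that an abelian variety over a number field has only finitely many torsion points rational over its cyclotomic $\Z_\ell$-extension. I expect this half to be essentially a matter of assembling the literature.

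The substance is the rank statement, and here I would pass to characters of $\Gamma=\Gal(K_\infty/K)\cong\Z_\ell$. Since $A(K_\infty)\otimes\Q=\varinjlim_n A(K_n)\otimes\Q$ and $A(K_n)\otimes\overline{\Q}=\bigoplus_\chi (A(K_n)\otimes\overline{\Q})^\chi$ ranges over the finitely many characters of $\Gal(K_n/K)$, it suffices to show that only finitely many finite-order characters $\chi$ of $\Gamma$ contribute a non-zero $\chi$-isotypic component --- equivalently, that the twisted abelian variety $A_\chi$ has Mordell--Weil rank zero over $K$ for all but finitely many such $\chi$. I would bound the $\chi$-twisted rank above by $\ord_{s=1}L(A,\chi,s)$ (the ``algebraic $\leq$ analytic rank'' inequality, available via Kato's Euler system when $A$ is a modular abelian variety over $\Q$) and combine this with an analytic non-vanishing statement in the style of Rohrlich: $L(A,\chi,1)\neq 0$ for all but finitely many characters $\chi$ of $\ell$-power conductor. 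Together these force $A(K_\infty)\otimes\Q$ to be finite-dimensional, and with the torsion statement this gives the conjecture.

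The main obstacle is precisely the two inputs of the previous paragraph. Rohrlich's non-vanishing theorem and Kato's divisibility are at present theorems about $\GL_2$-type objects over $\Q$, leaning on modularity and on the ground field being $\Q$; for a general abelian variety over a general number field neither the rank bound nor the non-vanishing of the relevant twisted $L$-values is known, which is exactly why the assertion remains a conjecture. Thus the proposal yields an unconditional proof only in cases such as $K=\Q$ with $A$ an elliptic curve or a modular abelian variety, and in general it is contingent on the anticipated analytic and Iwasawa-theoretic generalisations of these results.
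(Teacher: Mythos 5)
You should note first that the statement you were asked to prove is labelled a \emph{conjecture} in the paper (Mazur's conjecture, cited from \cite{Mazur_1972}); the paper offers no proof of it, and none is known in general. What the paper does record is that the conjecture holds in special cases, e.g.\ for elliptic curves $E/\Q$ one has $E(\Q_\infty)$ finitely generated by the theorems of Kato, Ribet and Rohrlich.

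Your proposal is, in substance, an accurate sketch of exactly that known route: splitting finite generation into finiteness of torsion (handled by Imai-type theorems and the unramifiedness of $K_\infty/K$ away from $\ell$, together with Weil bounds at places of good reduction) and finite-dimensionality of $A(K_\infty)\otimes\Q$ (handled by decomposing along finite-order characters $\chi$ of $\Gal(K_\infty/K)$, bounding the $\chi$-component of the rank by the order of vanishing of $L(A,\chi,s)$ at $s=1$ via Kato's Euler system, and invoking Rohrlich's non-vanishing of $L(A,\chi,1)$ for all but finitely many $\chi$ of $\ell$-power conductor). This is indeed the mechanism behind the Kato--Ribet--Rohrlich result the paper cites. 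However, as you yourself point out, both of your key inputs --- the Euler-system rank bound and the non-vanishing theorem --- are currently available only for modular ($\GL_2$-type) objects over $\Q$, so your argument does not prove the statement as asserted for an arbitrary abelian variety over an arbitrary $K_\infty$; it proves it only in those special cases and is otherwise conditional on conjectural generalizations. That is not a defect you could have repaired: the general statement is open, and the paper treats it purely as a motivating conjecture rather than something it establishes. So the honest assessment is that your write-up is a correct survey of the known strategy and its scope, not a proof of the statement, and no proof exists in the paper to compare it against.
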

Another is a conjecture of Parshin and Zarhin \cite[page 91]{Zarhin_Parshin}
which is the analogue of Faltings' theorem (Mordell conjecture) over $K_\infty$.
\begin{conjecture}[Parshin and Zarhin]
Let $X/K_\infty$ be a curve of genus $\ge 2$.
Then $X(K_\infty)$ is finite.
\end{conjecture}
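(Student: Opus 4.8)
The plan is to deduce this from Mazur's conjecture above. Let $X/K_\infty$ be a smooth projective curve of genus $g\ge 2$. If $X(K_\infty)=\emptyset$ there is nothing to prove, so suppose $X(K_\infty)$ contains a point $P_0$, and let $\iota\colon X\hookrightarrow J$ be the Abel--Jacobi embedding into the Jacobian $J=\operatorname{Jac}(X)$ normalised by $\iota(P_0)=0$; this is a closed immersion defined over $K_\infty$. Since $\iota$ is an isomorphism onto its image over $K_\infty$, it identifies $X(K_\infty)$ with $\iota(X)(\overline{\Q})\cap J(K_\infty)$, so it suffices to bound the intersection of the curve $\iota(X)\subseteq J$ with the subgroup $\Gamma:=J(K_\infty)$ of $J(\overline{\Q})$.

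Granting Mazur's conjecture, $\Gamma=J(K_\infty)$ is finitely generated, so Faltings' theorem on rational points of subvarieties of abelian varieties --- the Mordell--Lang statement for finitely generated subgroups --- applies and shows that $\iota(X)\cap\Gamma$ lies in a finite union of cosets $\gamma_i+B_i$ with each $B_i$ an abelian subvariety of $J$ satisfying $\gamma_i+B_i\subseteq\iota(X)$. As $\iota(X)$ is an irreducible curve isomorphic to $X$, a coset of positive dimension would force $\iota(X)=\gamma_i+B_i$ with $B_i$ one-dimensional, contradicting $g\ge 2$; hence every $B_i$ is trivial and $X(K_\infty)=\iota(X)\cap\Gamma$ is finite. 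The main obstacle is therefore Mazur's conjecture itself, which remains open in general.

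Note that one cannot instead imitate Faltings' original proof of the Mordell conjecture, which combines the Kodaira--Parshin covering construction with a Shafarevich-type finiteness theorem: the results of the present paper show that over $K_\infty$ there are infinitely many curves with good reduction outside $\{2,\ell\}$ and --- for suitable $\ell$ --- infinitely many isogeny classes among their Jacobians, so the relevant Shafarevich finiteness can fail over $K_\infty$. Likewise the Diophantine approximation method of Vojta and Faltings does not transfer directly, since $K_\infty/\Q$ is an infinite extension and the Northcott property is unavailable.

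Finally, the reduction above already establishes the conjecture unconditionally whenever finite generation of $J(K_\infty)$ is known --- for instance, by Iwasawa theory on the cyclotomic $\Z_\ell$-extension (Kato, Rohrlich, Rubin), when $J$ is isogenous over $K_\infty$ to a product of elliptic curves covered by those results. An alternative route works entirely over the finite layers $\Q_{r,\ell}$: the uniform bounds for rational points on curves of Dimitrov--Gao--Habegger and K\"uhne control $\#X(\Q_{r,\ell})$ in terms of $g$ and $\operatorname{rank}J(\Q_{r,\ell})$, so one would only need these ranks to stay bounded as $r\to\infty$ --- but that boundedness is once more Mazur's conjecture, so the difficulty is intrinsic.
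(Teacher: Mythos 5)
Your reduction is correct, and, necessarily, it is conditional on Mazur's conjecture: the statement is itself a conjecture, and the paper likewise only remarks that it follows from Mazur's conjecture together with Faltings' theorem. The difference lies in which finiteness theorem of Faltings you invoke and where you apply it. The paper descends to a finite layer: granting Mazur, $J(K_\infty)$ is finitely generated, hence $J(K_\infty)=J(K_r)$ for some $r$, and the Abel--Jacobi embedding then gives $X(K_\infty)=X(K_r)$, which is finite by Faltings' resolution of the Mordell conjecture over the number field $K_r$. You instead stay at the infinite level and apply Faltings' Mordell--Lang theorem for finitely generated subgroups: $\iota(X)\cap J(K_\infty)$ is contained in finitely many cosets of abelian subvarieties lying inside $\iota(X)$, and these must be single points because $g\ge 2$. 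Both arguments are sound: the paper's uses only the original Mordell conjecture plus a small descent step (one must note that $X$, the base point and $\iota$ are defined over some finite layer, so a $K_\infty$-point of $X$ whose image lies in $J(K_r)$ is already a $K_r$-point), whereas yours trades that descent for the stronger Mordell--Lang input, applied to the finitely generated subgroup $J(K_\infty)$ of $J(\overline{\Q})$ (legitimate, since such a subgroup lies in $J(L)$ for some number field $L$). Your closing observations --- that the Shafarevich-based strategy cannot transfer to $K_\infty$ in view of the counterexamples constructed here, and that the reduction becomes unconditional when finite generation of $J(K_\infty)$ is known, e.g.\ via Kato--Rohrlich for suitable quotients --- are consistent with the paper's own remarks.
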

A third is the following theorem of Zarhin \cite[Corollary 4.2]{Zarhin_2010},
which asserts that the Tate homomorphism conjecture
(also a theorem of Faltings \cite{Faltings} over number fields)
continues to hold over $K_\infty$.
\begin{theorem}[Zarhin]
	Let $A$, $B$ be abelian varieties defined over $K_{\infty,\ell}$,
	and denote their respective $\ell$-adic Tate modules
	by $T_\ell(A)$, $T_\ell(B)$.
Then the natural embedding 
\[
	\Hom_{K_\infty}(A,B) \otimes \Z_\ell \hookrightarrow 
	\Hom_{\Gal(\overline{K_\infty}/K_\infty)}(T_\ell(A),T_\ell(B))
\]
is a bijection.
\end{theorem}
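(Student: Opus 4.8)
The plan is to bootstrap from Faltings' theorem over number fields. First one descends: $A$ and $B$ are of finite type, so they arise by base change from abelian varieties over some finite layer, and we may assume $A$, $B$ are defined over $K_r:=K_{r,\ell}$; since moreover the finite data defining a homomorphism over $K_\infty=\bigcup_s K_{s,\ell}$ already lives over some layer, we have $\Hom_{K_\infty}(A,B)=\bigcup_{s\ge r}\Hom_{K_s}(A,B)$ where $K_s:=K_{s,\ell}$. Write $G_s=\Gal(\overline{K}/K_s)$, $G_\infty=\Gal(\overline{K}/K_\infty)$, and $\Gamma=G_r/G_\infty=\Gal(K_\infty/K_r)\cong\Z_\ell$, and let $W=\Hom_{\Z_\ell}(T_\ell(A),T_\ell(B))$, a finitely generated $\Z_\ell$-module with the conjugation action of $G_r$, so that $W^{G_s}=\Hom_{G_s}(T_\ell(A),T_\ell(B))$ and $W^{G_\infty}=\Hom_{G_\infty}(T_\ell(A),T_\ell(B))$. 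For each finite $s$, Faltings' theorem over the number field $K_s$ gives an isomorphism $\Hom_{K_s}(A,B)\otimes\Z_\ell\xrightarrow{\sim}W^{G_s}$, compatibly in $s$. Passing to the colimit, the natural map $\Hom_{K_\infty}(A,B)\otimes\Z_\ell\to W^{G_\infty}$ is injective with image $\bigcup_{s\ge r}W^{G_s}$. Hence it suffices to show that $W^{G_m}=W^{G_\infty}$ for some layer $m$; equivalently, that $\Gamma$ acts on the finitely generated $\Z_\ell$-module $W^{G_\infty}$ through a finite quotient.

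The key point is that $V:=W^{G_\infty}\otimes_{\Z_\ell}\Q_\ell=\Hom_{\Q_\ell}(V_\ell(A),V_\ell(B))^{G_\infty}$ is a \emph{semisimple} representation of $\Gamma$. Indeed, by Faltings $V_\ell(A)$ and $V_\ell(B)$ are semisimple $G_r$-modules, hence so is $\Hom_{\Q_\ell}(V_\ell(A),V_\ell(B))\cong V_\ell(A)^{\vee}\otimes V_\ell(B)$ (semisimplicity being preserved under duals and tensor products over a field of characteristic zero); its $G_r$-submodule $V$ is then semisimple over $G_r$, and since $G_\infty$ acts trivially on $V$, it is semisimple as a continuous $\Q_\ell$-representation of $\Gamma$. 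As $\Gamma\cong\Z_\ell$ is abelian, over $\overline{\Q}_\ell$ we get a decomposition $V\otimes\overline{\Q}_\ell=\bigoplus_i\overline{\Q}_\ell(\psi_i)$ into continuous characters $\psi_i\colon\Gamma\to\overline{\Q}_\ell^{\times}$.

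It now suffices to show that each $\psi_i$ has finite order. Granting this, the open subgroup $\Gamma':=\bigcap_i\ker\psi_i$ of $\Gamma$ acts trivially on $V\otimes\overline{\Q}_\ell$, hence on $V$, hence on $W^{G_\infty}$ (which is $\Z_\ell$-torsion-free); choosing a layer $m$ with $\Gal(K_\infty/K_m)\subseteq\Gamma'$ yields $W^{G_m}=(W^{G_\infty})^{\Gal(K_\infty/K_m)}=W^{G_\infty}$, and combined with Faltings over $K_m$ and the injection above this gives $\Hom_{K_\infty}(A,B)\otimes\Z_\ell=W^{G_\infty}$, as desired. To see that $\psi_i$ has finite order, note that it occurs as a subquotient of the $G_r$-representation $V_\ell(A)^{\vee}\otimes V_\ell(B)$ over $\overline{\Q}_\ell$, so: it is rational (Frobenius eigenvalues being ratios of roots of integral characteristic polynomials) and unramified outside a finite set of places of $K_r$; it is Hodge--Tate at each place above $\ell$, the $\ell$-adic Tate module of an abelian variety over a finite extension of $\Q_\ell$ being Hodge--Tate (Tate), and the Hodge--Tate condition being stable under duals, tensor products and subquotients; and it is pure of weight $0$, being a subquotient of $V_\ell(A)^{\vee}\otimes V_\ell(B)$. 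By the Serre--Tate theory of abelian $\ell$-adic representations, such a character is attached to an algebraic Hecke character of $K_r$; purity of weight $0$ forces the infinity type of this Hecke character to vanish, so it is of finite order, and hence so is $\psi_i$.

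The main obstacle is this last step: showing that the $\psi_i$ have finite order. Everything else is a formal colimit/descent argument resting on Faltings' theorem, but finiteness of the $\psi_i$ genuinely uses the ``geometric'' nature of Tate modules — that $V_\ell(A)^{\vee}\otimes V_\ell(B)$ is Hodge--Tate at $\ell$ and pure of weight $0$ — together with the classification of abelian $\ell$-adic representations; without this one could not exclude an infinite-order character factoring through $\Gamma$, such as the one cutting out the cyclotomic $\Z_\ell$-extension itself, from occurring in $W^{G_\infty}$. We note that the argument uses only that the set of ramified places is finite and that $\Gamma\cong\Z_\ell$; it therefore applies verbatim to an arbitrary $\Z_\ell$-extension of $K$, the cyclotomic one being singled out as the case relevant to the rest of the paper.
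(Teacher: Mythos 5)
The paper gives no proof of this statement---it is quoted from Zarhin with a citation---so your argument has to stand on its own, and it does not quite do so. The reduction in your first two paragraphs is fine: descent to a layer $K_r$, Faltings over each $K_s$, the identification of the image with $\bigcup_s W^{G_s}$, and the reduction to showing that each character $\psi_i$ of $\Gamma$ occurring in the semisimple representation $V=W^{G_\infty}\otimes\Q_\ell$ has finite order. The gap is the final step. The assertion that purity of weight $0$ forces the infinity type of the attached algebraic Hecke character to vanish is false whenever $K_r$ contains a CM field: weight $0$ only forces $n_\sigma+n_{\bar\sigma}=0$, not $n_\sigma=0$. Concretely, let $E$ have CM by an imaginary quadratic field $K$ in which $\ell$ splits, with the CM defined over $K$. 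Then $V_\ell(E)\cong\chi_1\oplus\chi_2$ as a $G_K$-module, and $\chi_1\chi_2^{-1}$ is an \emph{infinite-order} character occurring in $V_\ell(E)^\vee\otimes V_\ell(E)$ which is unramified outside a finite set, Hodge--Tate, pure of weight $0$, and attached to an algebraic Hecke character of infinity type $(1,-1)$. So the list of properties you invoke does not rule out infinite order, and nowhere in the finite-order step do you use that $\psi_i$ factors through $\Gamma$, let alone that $K_\infty$ is the \emph{cyclotomic} $\Z_\ell$-extension.

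Your closing remark that the argument applies verbatim to an arbitrary $\Z_\ell$-extension shows the gap is essential, because the theorem is false in that generality. With $E$, $K$, $\ell$ as above, replace $K$ by a finite extension $K_r$ killing the torsion in the image of $\chi_1\chi_2^{-1}$; the fixed field $M_\infty$ of $\ker\bigl(\chi_1\chi_2^{-1}|_{G_{K_r}}\bigr)$ is then a $\Z_\ell$-extension of $K_r$ (essentially the anticyclotomic one), over which the Galois image in $\Aut(T_\ell E)$ is scalar, so $\End_{\Gal(\overline{M_\infty}/M_\infty)}(T_\ell E)$ has $\Z_\ell$-rank $4$ while $\End_{M_\infty}(E)\otimes\Z_\ell$ has rank $2$; all of your hypotheses (finitely many ramified places, $\Gamma\cong\Z_\ell$, semisimplicity, Hodge--Tate, weight $0$) hold there, and the offending $\psi_i$ is exactly $\chi_1\chi_2^{-1}$. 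A correct proof must therefore use the cyclotomic hypothesis: since $\psi_i$ factors through $\Gamma$, which is a quotient of $\Gal(K_r(\zeta_{\ell^\infty})/K_r)\hookrightarrow\Z_\ell^\times$, it is, up to a finite-order twist, a $\Z_\ell$-power $s$ of the cyclotomic character; the Hodge--Tate condition forces $s\in\Z$, and only then does purity of weight $0$ force $s=0$, hence finite order. (Zarhin's published argument is organized differently, but the cyclotomic nature of $K_\infty$ enters in the same essential way.) With that step repaired, your overall strategy is sound; as written, it is not a proof.
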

Mazur's conjecture is now known to hold for certain elliptic curves.
For example, if  $E$ is an elliptic curve defined over $\Q$ then
$E(\Q_\infty)$ is finitely generated
thanks to theorems of Kato, Ribet and Rohrlich \cite[Theorem 1.5]{GreenbergParkCity}.
From this one can deduce \cite[Theorem 1.24]{GreenbergParkCity}
that $X(\Q_\infty)$ is finite
for curves $X/\Q$ of genus $\ge 2$ equipped with a non-constant
morphism to an elliptic curve $X \rightarrow E$ defined over $\Q$.
We also note that the conjecture of Parshin and Zarhin follows
easily from Mazur's conjecture and Faltings' theorem. Indeed, 
using the Abel-Jacobi map we can deduce from
Mazur's conjecture that $X(K_{\infty})=X(K_r)$
for suitably large $r$, and we know
that $X(K_r)$ is finite by Faltings' theorem.

\bigskip

It is natural to wonder  
whether other standard conjectures
and theorems concerning the arithmetic of curves
and abelian varieties over number fields continue to hold over $K_\infty$.
The purpose of this paper is to give counterexamples
to potential generalizations of certain theorems of 
Siegel and Shafarevich to $K_\infty$.
A theorem of Siegel (e.g.\ \cite[Theorem 0.2.8]{Abramovich})
asserts that
$(\PP^1 \setminus \{0,1,\infty\})(\OO_{K,S})$
is finite for any number field $K$ and any finite
set of primes $S$. We show that the corresponding
statement over $\Q_{\infty,\ell}$ is false,
at least for $\ell=2$, $3$, $5$, $7$. 
We denote by $\upsilon_\ell$ the totally ramified prime of $\Q_{\infty,\ell}$
above $\ell$ (the precise meaning of primes in infinite extensions
of $\Q$ is clarified in Section~\ref{sec:units}).
\begin{thm}\label{thm:Siegel}
Let $\ell=2$, $3$, $5$ or $7$. 
Let
\begin{equation}\label{eqn:S}
		S \; = \; 
		\begin{cases}
			\{ \upsilon_\ell\} & \text{if $\ell=2$, $5$, $7$}\\
			\emptyset & \text{if $\ell=3$}.
		\end{cases}
\end{equation}
Let $\OO_S$ denote the $S$-integers of $\Q_{\infty,\ell}$.
Then $(\PP^1\setminus \{0,1,\infty\})(\OO_S)$
is infinite.
\end{thm}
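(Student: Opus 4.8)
The plan is to exhibit, for each $\ell\in\{2,3,5,7\}$, an explicit infinite family of solutions lying in successively higher layers $\Q_{r,\ell}$. First I would record the elementary reformulation: a point of $(\PP^1\setminus\{0,1,\infty\})(\OO_S)$ is the same as a pair $(\varepsilon,\delta)$ with $\varepsilon+\delta=1$ and $\varepsilon,\delta\in\OO_S^\times$, i.e.\ an $\varepsilon\in\OO_S^\times$ with $1-\varepsilon\in\OO_S^\times$. Since each layer $\Q_{r,\ell}$ is a number field meeting $S$ in a finite set of primes, Siegel's theorem already gives finiteness inside each layer; so it suffices to produce, for infinitely many $r$, such an $\varepsilon$ that generates $\Q_{r,\ell}$ (then distinctness across $r$ is automatic, and infinitude follows). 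The one arithmetic input I would use repeatedly is the standard dichotomy for a root of unity $\xi$: if $\ord(\xi)$ has two distinct prime factors then $1-\xi$ is a unit, whereas if $\ord(\xi)=\ell^{k}$ with $k\ge1$ then $1-\xi$ generates the unique prime above $\ell$.

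For $\ell=2,3$ I would take $\varepsilon_r=\zeta+\zeta^{-1}=2\cos(2\pi/N)$ with $\zeta=\zeta_N$, where $N=2^{r+2}$ for $\ell=2$ and $N=3^{r+1}$ for $\ell=3$. This $\varepsilon_r$ generates $\Q_{r,\ell}$ because for $\ell\le3$ the prime-to-$\ell$ part of $\Z_\ell^{\times}=\Gal(\Q(\mu_{\ell^\infty})/\Q)$ has order $\le2$ and is just complex conjugation, so $\Q_{r,\ell}$ is precisely the maximal totally real subfield of $\Q(\mu_N)$. I would check the two membership conditions via
\[
\varepsilon_r=\zeta^{-1}\bigl(1-(-\zeta^{2})\bigr),\qquad
1-\varepsilon_r=-\zeta^{-1}(\zeta^{2}-\zeta+1)=-\zeta^{-1}\cdot\frac{1-(-\zeta^{3})}{1-(-\zeta)},
\]
applying the dichotomy to $-\zeta,-\zeta^{2},-\zeta^{3}$. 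For $\ell=3$ and $r\ge1$ their orders $2\cdot3^{r+1},\,2\cdot3^{r+1},\,2\cdot3^{r}$ each have two prime factors, so every displayed factor $1-(\cdot)$ is a unit, hence $\varepsilon_r,1-\varepsilon_r\in\OO^{\times}$ and the statement holds with $S=\emptyset$. For $\ell=2$ the relevant orders are powers of $2$, so those factors generate $\upsilon_2$ and $\varepsilon_r,1-\varepsilon_r\in\OO_{\{\upsilon_2\}}^{\times}$.

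For $\ell=5,7$ the element $2\cos(2\pi/\ell^{r+1})$ now generates $\Q(\mu_{\ell^{r+1}})^{+}$, of degree $\tfrac{\ell-1}{2}\ell^{r}$, which properly contains $\Q_{r,\ell}$; the group $G_r=\Gal(\Q(\mu_{\ell^{r+1}})^{+}/\Q_{r,\ell})$ is cyclic of order $\tfrac{\ell-1}{2}\in\{2,3\}$. I would therefore descend to $G_r$-invariants and take for $\varepsilon_r$ a suitable $G_r$-invariant cyclotomic unit — the natural first candidate being
\[
\varepsilon_r=\Norm_{\Q(\mu_{\ell^{r+1}})^{+}/\Q_{r,\ell}}\!\bigl(2\cos(2\pi/\ell^{r+1})\bigr)\ \in\ \Q_{r,\ell},
\]
the product of the $\tfrac{\ell-1}{2}$ conjugates of $2\cos(2\pi/\ell^{r+1})$ over $\Q_{r,\ell}$ (a product of conjugate Gaussian periods). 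Being a norm of a $\upsilon_\ell$-unit, $\varepsilon_r$ lies in $\OO_{\{\upsilon_\ell\}}^{\times}$ automatically, and the real work would be to show $1-\varepsilon_r\in\OO_{\{\upsilon_\ell\}}^{\times}$ as well: I would expand $1-\varepsilon_r$ and try to rewrite it as a product or quotient of terms $1-\xi$, $\xi$ a root of unity, so the dichotomy applies. Concretely this amounts to an identity in the group ring of $(\Z/\ell^{r+1})^{\times}$ that must hold uniformly in $r$, which I would establish by pinning down explicit generators of the relevant cyclotomic units over $\Q_{1,\ell}$ for $\ell=5,7$ and then propagating up the tower.

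The hard part is exactly this last step: identifying, for $\ell=5$ and $\ell=7$, the correct $G_r$-invariant cyclotomic unit $\varepsilon_r\in\Q_{r,\ell}$ whose complement $1-\varepsilon_r$ again decomposes into cyclotomic factors — and thereby seeing transparently why the construction is confined to $\tfrac{\ell-1}{2}\le3$, i.e.\ to $\ell\le7$. By contrast the $\ell=2,3$ family, the root-of-unity dichotomy, and the reduction of infinitude to "one new solution per layer" are all routine.
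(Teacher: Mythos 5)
Your treatment of $\ell=2$ and $\ell=3$ is correct, and it is in fact a pleasantly simpler route than the paper's: taking $\varepsilon_r=\zeta+\zeta^{-1}$ with $\zeta$ of $2$- or $3$-power order amounts to the super-cyclotomic identity $\Phi_4(X)-\Phi_6(X)=X$ (the paper instead uses $\Phi_2(X)^2-\Phi_3(X)=X$ for $\ell=2$ and a two-variable identity $E(\alpha,\beta)+F(\alpha,\beta)=1$ built from $\Phi_8$ and $\Phi_4$ for $\ell=3$), and your observation that a solution generating the $r$-th layer is automatically new replaces the paper's verification that $\varepsilon_n\notin\langle\pm\zeta_{\ell^n},V_{n-1}\rangle$; both mechanisms for infinitude are sound.

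For $\ell=5$ and $\ell=7$, however, the proposal is not a proof: you explicitly defer ``the real work'' of showing $1-\varepsilon_r\in\OO_S^\times$, and that is precisely where the substance of the theorem lies, since now $\Q_{r,\ell}$ is a proper subfield of $\Q(\mu_{\ell^{r+1}})^+$ and one must produce Galois-invariant solutions. Worse, the candidate you float already fails in the first layer: for $\ell=5$, $r=1$, the relative norm is $\varepsilon=(\zeta_{25}+\zeta_{25}^{-1})(\zeta_{25}^{7}+\zeta_{25}^{-7})\in\Q_{1,5}$ (here $7$ has order $4$ modulo $25$), and a direct computation with the five real conjugates gives $\Norm_{\Q_{1,5}/\Q}(1-\varepsilon)=7$, so $1-\varepsilon$ is divisible by a prime above $7$ and is not a $\{\upsilon_5\}$-unit. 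Thus the invariant-norm idea does not by itself produce solutions, and no mechanism is offered for finding the needed identity. The paper's actual proof of these cases rests on a different device: for $\ell=5$ it exhibits polynomials $F,G,H$ in four variables, each a product of (homogenized) cyclotomic values and invariant under the $4$-cycle $(x_1,x_2,x_3,x_4)$, satisfying $F-G=H$; specializing at $(\zeta,\zeta^{a},\zeta^{a^2},\zeta^{a^3})$ with $a^2=-1$ in $\Z_5$ yields $\sigma$-invariant $S$-units summing to $1$, hence points over $\Q_{\infty,5}$. For $\ell=7$ a three-variable identity is used and then massaged (substituting $x_i^2$ and dividing by a monomial) so as to be invariant under both the order-$3$ automorphism and complex conjugation. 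Producing such identities, together with the subsequent non-degeneracy check (non-membership in $\langle\pm\zeta_{\ell^n},V_{n-1}\rangle$, or generation of the layer), is the missing content; as it stands the theorem is established only for $\ell=2,3$.
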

\noindent \textbf{Remarks.}
\begin{itemize}
\item If $S=\emptyset$ then $\OO_S=\OO_{\infty}$
is the set of integers of $\Q_{\infty,\ell}$.
In \cite{FKS2} it is shown that $(\PP^1\setminus \{0,1,\infty\})(\OO_{\infty})
=\emptyset$ for $\ell \ne 3$. 
The obstruction given in \cite{FKS2}
for $\ell \ne 3$ is local in nature.
In essence, Theorem~\ref{thm:Siegel}
complements this result, showing that we can obtain
infinitely many integral or $S$-integral points in the absence
of the local obstruction. 
The proof of Theorem~\ref{thm:Siegel}
is constructive.
\item
Theorem~\ref{thm:Siegel}
strongly suggests that the conjecture of Parshin
and Zarhin does not admit a 
straightforward generalization
to the broader context
of integral points on hyperbolic curves. 
We also remark that there is a critical difference
over $K_\infty$
between complete curves $X$ of genus $\ge 2$
and $\PP^1\setminus \{0,1,\infty\}$.
For the former, the group of 
$K_\infty$-points of the Jacobian is expected to be finitely
generated by Mazur's conjecture. For the latter, the analogue
of the Jacobian is the generalized Jacobian which is
$\G_m \times \G_m$, and its group of $K_\infty$-points
is
$(\G_m\times \G_m)(K_\infty)=\OO_\infty^\times \times \OO_\infty^\times$, which is infinitely generated.
\end{itemize}
Variants of the proof of Theorem~\ref{thm:Siegel} give the following.
\begin{thm}\label{thm:Siegel2}
Let $\baseprime = 2, 3$ or $5$. Let $S=\{\upsilon_\ell\}$
and write $\OO_S$ for the $S$-integers of $\Q_{\infty,\ell}$.
Let 
\[
	k \; \in \; \begin{cases}
		\{1,2,3,4,5,6,7,8,10,12,24\} & \text{if $\baseprime = 2, 3$}, \\
		\{1, 2, 4\} & \text{if $\baseprime = 5$}.
	\end{cases}
\]
Then
$(\PP^1 \setminus \{0,k,\infty\})(\OO_S)$
is infinite.
\end{thm}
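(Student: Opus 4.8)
The plan is to adapt the construction underlying Theorem~\ref{thm:Siegel}, whose proof is constructive and produces an infinite family of $S$-units $\varepsilon$ with $1-\varepsilon$ also an $S$-unit. For Theorem~\ref{thm:Siegel2} we instead seek infinitely many $x \in \OO_S$ with $x$ and $x-k$ both in $\OO_S^\times$, equivalently a solution to the generalised unit equation $\varepsilon + k\delta = 1$ with $\varepsilon,\delta \in \OO_S^\times$; the point is $[x:1] \in (\PP^1\setminus\{0,k,\infty\})(\OO_S)$. First I would revisit the explicit source of $S$-units in $\Q_{\infty,\ell}$ used for Theorem~\ref{thm:Siegel}: these come from cyclotomic units, i.e.\ elements of the form $(\zeta^a - 1)/(\zeta - 1)$ in the layers $\Q_{r,\ell}$, and more relevantly from the norms of $1-\zeta_{\ell^n}$ down to the totally real field $\Q_{r,\ell}$, together with units like $\zeta_{\ell^n} + \zeta_{\ell^n}^{-1} \pm (\text{small})$. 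The totally ramified prime $\upsilon_\ell$ lies above $\ell$, and $1-\zeta_{\ell^n}$ generates the unique prime above $\ell$, so differences of roots of unity supply an ample stock of $\{\upsilon_\ell\}$-units.

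The key step is an algebraic identity. For Theorem~\ref{thm:Siegel} one uses something like the factorisation of $1 - t^m$ or a Chebyshev-type identity to write $1$ as a sum of two $S$-units in infinitely many genuinely distinct ways (distinct because they live in strictly increasing layers $\Q_{r,\ell}$, so the family cannot be finite). To get the shifted set $\{0,k,\infty\}$ I would look for identities of the shape $u - v = k$ with $u,v$ products of cyclotomic units. Concretely, writing $\lambda_n = 1-\zeta_{\ell^n}$ and using $\prod_{j=1}^{\ell-1}(1-\zeta_\ell^j) = \ell$ and its higher-level analogues, one can manufacture elements whose difference is a fixed small rational integer; the list of allowed $k$ — namely $\{1,2,3,4,5,6,7,8,10,12,24\}$ for $\ell=2,3$ and $\{1,2,4\}$ for $\ell=5$ — is exactly the set of $k$ for which such an identity can be forced to have all the relevant factors be $\upsilon_\ell$-units, i.e.\ $k$ must be (up to the structure of the construction) supported on the prime $\ell$ in a suitable sense, which explains why more values of $k$ survive for the smaller primes $\ell=2,3$ than for $\ell=5$ and why $\ell=7$ drops out entirely. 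I would verify case by case that for each listed $k$ there is an explicit one-parameter (in $n$ or $r$) family $x = x_n \in \OO_S$ with $x_n, x_n - k \in \OO_S^\times$, and that the $x_n$ take infinitely many values — most cleanly by checking that $x_n$ generates $\Q_{r,\ell}$ for arbitrarily large $r$, or that some valuation or the degree over $\Q$ grows with $n$.

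The main obstacle I anticipate is precisely pinning down, for each $k$ in the list, the right identity and then certifying that \emph{all} auxiliary factors appearing are units away from $\upsilon_\ell$ — this is a finite but delicate bookkeeping problem in the semilocal arithmetic of $\Q_{\infty,\ell}$ at $\ell$, since a stray prime factor of $k$ or of a cyclotomic norm that is not supported above $\ell$ would destroy the $S$-integrality of $x_n - k$. A secondary point requiring care is the genuine infinitude of the resulting set of points: one must rule out that the construction collapses, for large $n$, onto finitely many algebraic numbers (e.g.\ via a hidden functional relation), which I would handle by a height or ramification argument showing the points escape every fixed layer $\Q_{r,\ell}$. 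Once these are in place, the theorem follows by exhibiting, for each admissible $k$, the corresponding infinite family as points of $(\PP^1\setminus\{0,k,\infty\})(\OO_S)$.
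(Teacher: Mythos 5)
Your outline matches the paper's strategy only at the topmost level---explicit identities among products of cyclotomic-unit-type elements give $\varepsilon+\delta=k$ with both summands $S$-units, and infinitude is certified by showing the solutions escape every finite layer---but as written it has genuine gaps. The entire substance of the theorem lies in actually exhibiting the identities, and you never do: the paper's proof for $\ell=2,3$ rests on nine explicit ternary relations between products of cyclotomic polynomials (``super-cyclotomic'' polynomials), e.g.\ $\Phi_2(X)^2-\Phi_3(X)=X$ up through $\Phi_2(X)^4\Phi_5(X)-\Phi_1(X)^4\Phi_{10}(X)=10X\Phi_4(X)^3$, found by computer search; evaluating each at $\zeta_{\ell^n}$ and dividing by the right-hand factor gives the desired points of $(\PP^1\setminus\{0,k,\infty\})(\OO_S)$, real by the conjugation formula for $\Phi_m(\zeta)$, hence in $\Q_{\infty,\ell}=\Omega_{\infty,\ell}^+$ when $\ell=2,3$. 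The values $k=12,24$ come not from new identities but from scaling the solutions of $\varepsilon+\delta=6$ (for $\ell=2$) or of $\varepsilon+\delta=4,8$ (for $\ell=3$) by $2,4$ or $3$, which are $S$-units exactly because $S=\{\upsilon_\ell\}$. Without some replacement for these identities there is no proof.

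More seriously, your plan would fail for $\ell=5$ as described. Anything built from cyclotomic units and forced to be real lands in $\Omega_{\infty,5}^+=\cup_n\Q(\zeta_{5^n})^+$, which contains $\Q_{\infty,5}$ with index $2$, so invariance under complex conjugation is not enough to descend to $\Q_{\infty,5}$. The paper runs a genuinely different construction there: identities $F-G=kH$ for $k=1,2,4$ between products of homogenized cyclotomic polynomials in four variables, invariant under the $4$-cycle $(x_1,x_2,x_3,x_4)$, evaluated at $(\zeta,\zeta^{a},\zeta^{a^2},\zeta^{a^3})$ with $a\in\Z_5^\times$, $a^2=-1$, so that the result is fixed by the order-$4$ automorphism whose fixed field is exactly $\Q_{\infty,5}$. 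Relatedly, your heuristic for the shape of the list of $k$---that $k$ must be supported at $\ell$---is not what is going on: $k=5,7$ occur for $\ell=2,3$ and $k=2,4$ for $\ell=5$; the full list is available for $\ell=2,3$ simply because $\Q_{\infty,\ell}=\Omega_{\infty,\ell}^+$ there, while for $\ell=5$ only the identities carrying the extra $4$-fold symmetry were found (and $\ell=7$ needs yet another device). Finally, your infinitude step is the right idea but vague; the paper makes it precise not by heights but by the explicit structure of the groups $V_n$ of cyclotomic $S$-units, showing $\varepsilon_n\notin\langle\pm\zeta_{\ell^n},V_{n-1}\rangle$ via the basis description of $V_n$ modulo $V_{n-1}$.
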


\bigskip

Shafarevich's conjecture
asserts that for a number field $K$, a dimension $n$,
a degree $d$,
and a finite set of places $S$, there are only finitely
many isomorphism classes of polarized abelian varieties defined over $K$
of dimension $n$ with degree $d$ polarization and with good reduction away from $S$.
This conjecture was proved by Shafarevich
for elliptic curves (i.e. $n=1$) and by Faltings \cite{Faltings}
in complete
generality. If we replace $K$ by $\Q_{\infty,\baseprime}$
then the Shafarevich conjecture no longer holds. For example,
consider
\[
 	E_{\varepsilon} \; : \; \varepsilon Y^2 = X^3-X
\]
where $\varepsilon \in \OO_\infty^\times$. This elliptic
curve has good reduction away from the primes above $2$.
Moreover, $E_{\varepsilon}$, $E_{\delta}$ are isomorphic
over $\Q_{\infty}$
if and only if $\varepsilon/\delta$ is a square in $\OO_\infty^\times$.
As $\OO_\infty^\times/(\OO_\infty^\times)^2$
is infinite, we deduce that there are infinitely many
isomorphism classes of elliptic curves over $\Q_\infty$
with good reduction away from the primes above $2$.
It is however natural to wonder if 
a sufficiently weakened version of the Shafarevich
conjecture continues to hold over $\Q_\infty$.
Indeed, the curves $E_{\varepsilon}$ in the above construction
form a single $\overline{\Q}$-isomorphism class.
This it is natural to ask if, for  suitable $\ell$ and 
finite set of primes $S$, does the set of elliptic curves
over $\Q_{\infty}$ with good reduction outside $S$
form infinitely many $\overline{\Q}$-isomorphism classes?
\begin{thm}\label{thm:Shafarevich}
Let $\ell=2$, $3$, $5$, or $7$. 
	Let $S$ be given by \eqref{eqn:S} 
	and let $S^\prime=S \cup \{\upsilon_2\}$
where $\upsilon_2$ is the unique prime of $\Q_{\infty, \baseprime}$
above $2$.
	Then, there are infinitely many $\overline{\Q}$-isomorphism classes of
	elliptic curves defined over $\Q_{\infty, \baseprime}$
	with good reduction away from $S^\prime$ and with full 2-torsion in $\Q_{\infty, \baseprime}$.
	Moreover, these elliptic curves form
	infinitely many distinct $\Q_{\infty,\ell}$-isogeny classes.
\end{thm}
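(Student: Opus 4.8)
The plan is to produce an explicit family of elliptic curves over $\Q_{\infty,\ell}$, parametrized by the infinitely many $S$-integral points of $\PP^1\setminus\{0,1,\infty\}$ furnished by Theorem~\ref{thm:Siegel}, and then to control both the $\overline{\Q}$-isomorphism class and the isogeny class via the $j$-invariant. Concretely, given a solution $\varepsilon+\delta=1$ with $\varepsilon,\delta\in\OO_S^\times$, I would attach the Legendre curve
\[
	E_\varepsilon \; : \; Y^2 = X(X-1)(X-\varepsilon).
\]
Since $\varepsilon$ and $\varepsilon-1=-\delta$ are both $S$-units, the discriminant of this model is (up to a power of $2$) an $S$-unit, so $E_\varepsilon$ has good reduction away from $S'=S\cup\{\upsilon_2\}$; and by construction all of $\{0,1,\varepsilon\}$ are rational, so $E_\varepsilon[2]\subset E_\varepsilon(\Q_{\infty,\ell})$. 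This handles the "good reduction away from $S'$" and "full $2$-torsion" assertions directly.

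For the $\overline{\Q}$-isomorphism statement, I would use $j(E_\varepsilon)=2^8(\varepsilon^2-\varepsilon+1)^3/(\varepsilon^2(\varepsilon-1)^2)$, and observe that two curves in the family are $\overline{\Q}$-isomorphic iff they share the same $j$-invariant. It therefore suffices to show that $\varepsilon\mapsto j(E_\varepsilon)$ takes infinitely many values on our infinite set of $S$-units $\varepsilon$. Since each fibre $j=j_0$ corresponds to at most the six values of $\varepsilon$ in an $S_3$-orbit under $\varepsilon\mapsto 1-\varepsilon,\ 1/\varepsilon,\ldots$, a single $j$-value accounts for only finitely many $\varepsilon$; hence infinitely many $\varepsilon$ give infinitely many $j$'s, and thus infinitely many $\overline{\Q}$-isomorphism classes. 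The only thing to check is that the $S$-integral points produced in the proof of Theorem~\ref{thm:Siegel} genuinely form an infinite set of distinct $S_3$-orbits rather than collapsing — this should follow from the explicit (constructive) nature of that proof, e.g.\ by exhibiting $\varepsilon$'s lying in layers $\Q_{r,\ell}$ of unboundedly large degree, or whose heights (suitably interpreted) grow.

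For the isogeny statement I would argue by a finiteness/counting argument over a fixed layer. An isogeny between two elliptic curves over $\Q_{\infty,\ell}$ is defined over some finite layer $\Q_{r,\ell}$; and two curves with full $2$-torsion that are $\Q_{\infty,\ell}$-isogenous have $j$-invariants lying in a common isogeny class, which — since an isogeny class of an elliptic curve over a fixed number field is finite (Faltings, or for the CM/non-CM dichotomy, classical), and each layer $\Q_{r,\ell}$ is a number field — contains only finitely many $j$-invariants \emph{with good reduction outside the finitely many places of $\Q_{r,\ell}$ above $S'$}, invoking Shafarevich's theorem over the number field $\Q_{r,\ell}$. Thus within each layer only finitely many of our curves are mutually isogenous; letting $r\to\infty$ and using that our family is not contained in any single layer, we extract infinitely many distinct $\Q_{\infty,\ell}$-isogeny classes.

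The main obstacle I expect is the last point: ensuring that the infinitely many $S$-units $\varepsilon$ from Theorem~\ref{thm:Siegel} are not all concentrated in finitely many isogeny classes, which really amounts to showing the associated $j$-invariants are "spread out" across the layers $\Q_{r,\ell}$. A clean way to do this is to track, in the constructive proof of Theorem~\ref{thm:Siegel}, the exact layer in which each solution first appears and to note that these layers are unbounded; combined with the Shafarevich finiteness in each fixed layer, this forces infinitely many isogeny classes. A secondary (but routine) point is bookkeeping at the prime $2$: one must confirm that passing to the Legendre model, and the $2$ appearing in the discriminant and in $j$, only ever enlarges the bad locus to $S'=S\cup\{\upsilon_2\}$ and nothing more.
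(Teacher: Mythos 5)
Your construction and the first two assertions match the paper: the paper likewise takes the Legendre curves $E_{\varepsilon_n}\colon Y^2=X(X-1)(X-\varepsilon_n)$ attached to the solutions $\varepsilon_n+\delta_n=1$ from Theorem~\ref{thm:Siegel}, and good reduction away from $S'$ together with full $2$-torsion follow exactly as you say. Your route to infinitely many $\overline{\Q}$-isomorphism classes (each $j$-fibre on the Legendre $\lambda$-line has at most six points, so infinitely many distinct $\varepsilon$ yield infinitely many $j$-invariants) is a correct and slightly more direct variant of the paper's argument, which instead checks that $\varepsilon_n\notin\langle\pm\zeta_{\ell^n},V_{n-1}\rangle$ while all six $\lambda$-values attached to $E_{\varepsilon_m}$ for $m<n$ do lie in that group.

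The isogeny statement is where your proposal has a genuine gap. You propose to apply Shafarevich's theorem over each finite layer $\Q_{r,\ell}$ and then ``let $r\to\infty$''. This cannot work as stated: an isogeny between two of the curves over $\Q_{\infty,\ell}$ is defined over some layer, but that layer depends on the pair and is a priori unbounded, so finiteness of mutually isogenous good-reduction curves inside any one fixed layer says nothing about the size of a $\Q_{\infty,\ell}$-isogeny class --- it could happen that $E_{n_1},E_{n_2},E_{n_3},\dots$ are all isogenous, with the isogenies living in ever higher layers. Moreover the Shafarevich bounds over the layers are necessarily unbounded as $r\to\infty$, since the first part of this very theorem produces infinitely many good-reduction curves over $\Q_{\infty,\ell}$; hence no contradiction can be extracted by passing to the limit. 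What is actually needed is the paper's Theorem~\ref{thm:finite}: the $\Q_{\infty,\ell}$-isogeny class of an elliptic curve without potential complex multiplication is finite. This is not a formal consequence of Shafarevich over number fields; its proof uses Kato's theorem (finite generation of $J_0(p^r)(\Q_{\infty,\ell})$, hence $X_0(p^r)(\Q_{\infty,\ell})=X_0(p^r)(\Q_{n,\ell})$, finite by Faltings) together with Serre's open image theorem to bound the primes dividing cyclic isogeny degrees. The paper also needs one further ingredient you omit: a lemma showing $E_{\varepsilon_n}$ has no potential CM for $n$ large (if it did, $j$ would lie in a totally real abelian field, forcing $\Pic(R)$ to be an elementary $2$-group and $[\Q(\varepsilon_n):\Q]\le 12$, contradicting Siegel's theorem for large $n$). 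With these two inputs, infinitely many pairwise non-isomorphic curves must occupy infinitely many isogeny classes; without them, your counting-over-layers argument does not close.
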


\subsection*{Remarks}
\begin{itemize}
\item By \cite[Lemma 2.1]{FKS2},
	a rational prime $p \ne \ell$ is inert in $\Q_{\infty,\ell}$
	if and only if $p^{\ell-1} \not \equiv 1 \pmod{\ell^2}$.
It follows from this that $2$ is inert in $\Q_{\infty,\ell}$
for $\ell=3$, $5$, $7$ and $11$.
\item Faltings' proof \cite{Faltings} of the Mordell conjecture can
be considered to have three major steps. In the first step, Faltings proves
the Tate homomorphism conjecture. In the second step,
Faltings derives the Shafarevich conjecture from
the Tate homomorphism conjecture, and in the final 
step Faltings uses the \lq Parshin trick\rq\ to 
deduce the Mordell conjecture from the Shafarevich
conjecture. Although Zarhin has extended the
Tate homomorphism conjecture to $K_\infty$,
Theorem~\ref{thm:Shafarevich} 
suggests that there is no plausible strategy for
proving the conjecture
of Parshin and Zarhin by mimicking
Faltings' proof of the Mordell conjecture.
\end{itemize}

\bigskip

It is natural to wonder if the isogeny classes appearing
in the proof of Theorem~\ref{thm:Shafarevich} are finite or infinite.
Rather reassuringly they turn out to be finite.
\begin{thm}\label{thm:finite}
Let $E$ be an elliptic
curve over $\Q_{\infty,\ell}$
without potential complex multiplication.
Then the $\Q_{\infty,\ell}$-isogeny class
of $E$ is finite.
\end{thm}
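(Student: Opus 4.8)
The plan is to fix a model of $E$ over a finite layer, reformulate the problem in terms of Galois-stable lattices in the rational Tate modules, and then combine two ingredients: Serre's \lq\lq big image\rq\rq\ theorem to reduce to finitely many primes, and the Bruhat--Tits tree to control each of those primes.

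First, since $E$ is given by finitely many coefficients, it is defined over some layer $F=\Q_{r,\ell}$, a totally real abelian number field. Write $G_F=\Gal(\overline{\Q}/F)$ and $G_K=\Gal(\overline{\Q}/K)$, so that $G_K$ is normal in $G_F$ with $G_F/G_K\cong\Gal(K/F)\cong\Z_\ell$, which is abelian and pro-$\ell$. Since $E$ has no potential complex multiplication, $\End^0_K(E)=\Q$, and the $K$-isogeny class of $E$ injects into the set of families $(c_p)_p$, where $c_p$ is a homothety class of $G_K$-stable lattice in $V_p(E)$ and $c_p=[T_p(E)]$ for all but finitely many $p$ (any isogeny has finite degree). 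Hence it suffices to show: (a) for all but finitely many primes $p$, the lattice $T_p(E)$ is the only $G_K$-stable lattice in $V_p(E)$ up to homothety; and (b) for every prime $p$, there are only finitely many homothety classes of $G_K$-stable lattices in $V_p(E)$.

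For (a): by Serre's theorem, since $E/F$ is non-CM there is a finite set $\Sigma$ of primes, which we enlarge to contain $2$, $3$ and $\ell$, such that $\overline{\rho}_{E,p}(G_F)\supseteq\mathrm{SL}_2(\F_p)$ for all $p\notin\Sigma$. For such $p$ the image $\overline{\rho}_{E,p}(G_K)$ is a normal subgroup of $\overline{\rho}_{E,p}(G_F)$ with quotient an $\ell$-group; since $p\ge5$, $\mathrm{SL}_2(\F_p)$ is quasisimple and none of its proper quotients is a non-trivial $\ell$-group, forcing $\overline{\rho}_{E,p}(G_K)\supseteq\mathrm{SL}_2(\F_p)$, so $E[p]$ is an irreducible $\F_p[G_K]$-module. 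If $\Lambda\subseteq T_p(E)$ is a $G_K$-stable lattice not contained in $pT_p(E)$, its image in $E[p]$ is a non-zero submodule, hence everything, and Nakayama gives $\Lambda=T_p(E)$; thus any $E'$ in the isogeny class has $[T_p(E')]=[T_p(E)]$ via the identification $V_p(E')\cong V_p(E)$. For (b): it suffices to show $V_p(E)|_{G_K}$ is irreducible, for then the set of homothety classes of $G_K$-stable lattices is the fixed-point set of $G_K$ on the Bruhat--Tits tree of $\mathrm{PGL}_2(\Q_p)$ — a convex subtree — and it contains no infinite ray, since a fixed end would be a $G_K$-stable line in $V_p(E)$; a convex, locally finite, ray-free subtree is finite. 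To prove irreducibility, let $H_F$ and $H_K$ be the Zariski closures of $\rho_{E,p}(G_F)$ and $\rho_{E,p}(G_K)$ in $\mathrm{GL}_{2,\Q_p}$. Since $E$ has no potential CM its Mumford--Tate group is $\mathrm{GL}_2$, and by the theorem of Bogomolov (for $p=\ell$) and Serre the $p$-adic algebraic monodromy group $H_F$ contains $\mathrm{SL}_2$, so $H_F^{\mathrm{der}}=\mathrm{SL}_2$. As $G_K$ is normal in $G_F$, $H_K$ is normal in $H_F$; as $G_F/G_K$ is abelian, the quotient $H_F/H_K$ carries a Zariski-dense abelian subgroup and hence is abelian, so $H_K\supseteq H_F^{\mathrm{der}}=\mathrm{SL}_2$. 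Since $\mathrm{SL}_2$ acts irreducibly on $\Q_p^2$, so does $G_K$. Combining (a) and (b) bounds the isogeny class by a finite product of finite sets.

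The hard part is step (b) at $p=\ell$: over $K$ the cyclotomic character $\chi_\ell|_{G_K}$ has finite image, so $\rho_{E,\ell}(G_K)$ is far from being open in $\mathrm{GL}_2(\Z_\ell)$ and one cannot invoke openness of the $\ell$-adic image directly; passing to Zariski closures is what rescues the argument. This is precisely where the no-potential-CM hypothesis is essential — for a potentially CM curve $H_F$ is a torus and the argument collapses — and it relies on the identification of the $\ell$-adic monodromy group with the Mumford--Tate group. The small primes $p=2,3$ and the cases $\ell\in\{2,3\}$, where $\mathrm{SL}_2(\F_p)$ or $\mathrm{PSL}_2(\F_p)$ need not be simple, cause no difficulty: they are absorbed into the finite exceptional set $\Sigma$ in part (a), and part (b) is argued purely at the level of algebraic groups.
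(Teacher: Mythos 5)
Your proposal is correct, but it takes a genuinely different route from the paper at the crucial step. The paper also begins with Serre's big image theorem over a finite layer to kill all but finitely many isogeny degrees (its Lemma on $p$-isogenies uses solvability of the cyclotomic tower, where you use normality plus perfectness of $\operatorname{SL}_2(\F_p)$ — essentially the same mechanism), but for the remaining primes it bounds $p$-power isogenies by counting $\Q_{\infty,\ell}$-points on $X_0(p^r)$, using Kato's theorem that $J_0(p^r)(\Q_{\infty,\ell})$ is finitely generated together with Faltings. You instead prove that $V_p(E)$ is irreducible as a $G_K$-module for \emph{every} $p$, via the Zariski-closure argument: $H_K$ is normal in $H_F=\GL_2$ with abelian quotient, hence contains $\operatorname{SL}_2$, and then finiteness of stable lattice classes follows from the Bruhat--Tits tree. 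This avoids Kato and Faltings altogether, and in particular it handles $p=\ell$, precisely the case the paper's remark after its Lemma 14 says could not be reached by restricting Serre's irreducibility theorem — the point being that Zariski density, unlike openness, survives passage to the normal subgroup $G_K$ modulo an abelian quotient. What the paper's route buys is explicit statements about cyclic $p$- and $p^m$-isogenies and consistency with its other uses of Kato; what yours buys is a softer, more portable argument. The only step you assert without proof is that the isogeny class injects into collections of homothety classes of stable lattices (almost all equal to $[T_p(E)]$); this is routine given $\End^0_K(E)=\Q$, but it deserves the short lattice/quotient argument making it precise.
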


\bigskip

The original version of Shafarevich's conjecture \cite{Shafarevich},
(also proved by Faltings \cite[Korollar 1]{Faltings})
states that for a given number field $K$,
a genus $g$ and a finite set of places $S$,
there are only finitely many isomorphism classes
of genus $g$ curves $C/K$ with good reduction away from $S$.
Again this statement becomes false if we replace $K$ by 
$\Q_{\infty,\ell}$, for any prime $\ell$.

\begin{thm}\label{thm:Shafarevich2}
Let $g \ge 2$ and let $\ell = 3$, $5$, $7$, $11$ or $13$. There are infinitely many
$\overline{\Q}$-isomorphism classes
of genus $g$ hyperelliptic curves
over $\Q_{\infty, \ell}$
with good reduction away from $\{ \upsilon_2, \upsilon_\ell \}$. 
\end{thm}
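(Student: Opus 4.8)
The plan is to leverage Theorem~\ref{thm:Siegel} (or the variants in Theorem~\ref{thm:Siegel2}) to produce infinitely many $S$-units with prescribed properties, and then build hyperelliptic curves whose bad reduction is controlled by these units. Concretely, for $\ell=3,5,7,11,13$, the prime $2$ is inert in $\Q_{\infty,\ell}$ (by the first remark following Theorem~\ref{thm:Shafarevich}, $2^{\ell-1}\not\equiv1\pmod{\ell^2}$ for these $\ell$), and $\upsilon_\ell$ is the totally ramified prime above $\ell$. I would first show that the $S$-unit equation $\varepsilon+\delta=1$ with $S=\{\upsilon_\ell\}$ (or $S=\emptyset$ when $\ell=3$) has infinitely many solutions; this is exactly Theorem~\ref{thm:Siegel} for $\ell=3,5,7$, and for $\ell=11,13$ one would need to either extend that construction or argue directly that $\OO_{\infty,\ell}^\times$ modulo the relevant relations is infinite (this is the place where I would need to be a little careful, since Theorem~\ref{thm:Siegel} is only stated for $\ell\in\{2,3,5,7\}$; a separate argument showing $\OO_S^\times/(\OO_S^\times)^N$ is infinite for a suitable exponent may be the cleanest route).

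Given infinitely many solutions $\varepsilon+\delta=1$ with $\varepsilon,\delta\in\OO_S^\times$, I would write down, for each fixed $g\ge2$, an explicit family of genus $g$ hyperelliptic curves. A natural choice is
\[
C_{\varepsilon}\;:\; y^2 \;=\; x\,(x-1)\,(x-\varepsilon)\,\bigl(x^{2g-2}+a_{2g-3}x^{2g-3}+\cdots+a_0\bigr),
\]
or, more symmetrically, a model of the form $y^2 = f(x)$ where $f$ has degree $2g+1$ or $2g+2$ with all roots lying in $\{0,1,\infty\}\cup\OO_S^\times$ and with pairwise differences of roots again in $\OO_S^\times$; the discriminant of $f$ is then (up to a power of $2$ coming from the leading coefficient and the $2$-adic contributions) an $S$-unit, so $C_\varepsilon$ has good reduction away from $\{\upsilon_2,\upsilon_\ell\}$. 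The simplest instance: take roots $0,1,\varepsilon$ together with $2g-2$ further roots built from powers of a fixed $S$-unit (e.g.\ a generator of the cyclic part coming from the construction in Theorem~\ref{thm:Siegel}), chosen so that all differences remain $S$-units; since $\varepsilon$ and $\varepsilon-1=-\delta$ are both $S$-units, the ``moving'' root $\varepsilon$ contributes only $S$-unit factors to $\disc(f)$, and the remaining roots contribute a fixed $S$-unit.

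The remaining point is to check that these curves fall into infinitely many $\overline{\Q}$-isomorphism classes. Two hyperelliptic curves $y^2=f(x)$, $y^2=g(x)$ are $\overline{\Q}$-isomorphic if and only if the branch loci in $\PP^1$ are related by a M\"obius transformation, equivalently the cross-ratios of the branch points agree up to the finite $S_{2g+2}$-action on unordered tuples. The branch locus of $C_\varepsilon$ contains the three fixed points $0,1,\infty$ and the moving point $\varepsilon$ (plus the fixed auxiliary roots), so the cross-ratio of $(0,1,\infty,\varepsilon)$ is essentially $\varepsilon$ itself; as $\varepsilon$ ranges over infinitely many values and each $\overline{\Q}$-isomorphism class accounts for only finitely many (at most $|S_{2g+2}|$, via the standard finite orbit of $\varepsilon\mapsto \{\varepsilon,1-\varepsilon,1/\varepsilon,\dots\}$) values of $\varepsilon$, we get infinitely many classes. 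I would then note that good reduction away from $\{\upsilon_2,\upsilon_\ell\}$ is preserved throughout, completing the proof.

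The main obstacle I anticipate is twofold: first, arranging the auxiliary roots so that \emph{every} pairwise difference among all $2g+1$ or $2g+2$ branch points is an $S$-unit (not just the differences involving $\varepsilon$), since this requires the fixed part of the construction to be compatible with the cyclic subgroup of $S$-units produced by Theorem~\ref{thm:Siegel}; and second, handling $\ell=11,13$, where Theorem~\ref{thm:Siegel} as stated does not directly apply, so one must separately verify that there are infinitely many suitable $S$-units — most likely by an Iwasawa-theoretic or direct cyclotomic computation showing the $S$-unit group has unbounded rank-like growth along the tower, or by adapting the explicit construction. Once those two points are settled, the isomorphism-class count via cross-ratios is routine.
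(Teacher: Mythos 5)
There is a genuine gap, in fact two. First, your supply of branch points rests on solutions to the $S$-unit equation $\varepsilon+\delta=1$ over $\Q_{\infty,\ell}$, which Theorem~\ref{thm:Siegel} provides only for $\ell\in\{2,3,5,7\}$; for $\ell=11,13$ you have no construction, and the fallback you suggest (showing the $S$-unit group is ``infinitely generated'' or has unbounded growth along the tower) does not help: the $S$-unit group is already infinite over any number field, yet Siegel's theorem makes the unit equation have only finitely many solutions there, and for $S=\emptyset$, $\ell\ne 3$ the cited result of \cite{FKS2} shows there are \emph{no} solutions at all because of a local obstruction. So existence of infinitely many (or even one) solution for $\ell=11,13$ is exactly the kind of statement that needs a bespoke construction, and the paper never produces one — its proof of Theorem~\ref{thm:Shafarevich2} deliberately avoids unit equations altogether. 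It takes as branch points the Galois orbit under $G_n=\Gal(\Omega_{n,\ell}^+/\Q_{n-1,\ell})$ of elements $\eta=\zeta_{\ell^n}+\zeta_{\ell^n}^{-1}\in\cZ_n^+$ (possibly padded with roots from $\{0,\pm1\}$ to hit the right degree), for which the identity $(\alpha+\alpha^{-1})-(\beta+\beta^{-1})=\alpha^{-1}(1-\alpha\beta)(1-\alpha\beta^{-1})$ makes \emph{every} pairwise difference automatically a $\{\upsilon_\ell\}$-unit, uniformly in the odd prime $\ell$; descent to $\Q_{n-1,\ell}$ is then just Galois-stability of the root set.

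Second, even for $\ell\le 7$ your construction is incomplete at the step you yourself flag: to reach genus $g$ you must adjoin $2g-2$ auxiliary roots so that \emph{all} pairwise differences — among the auxiliary roots, and between them and $0$, $1$, $\varepsilon$ — are $S^\prime$-units. Natural choices (powers of a fixed $S$-unit $u$, or the six-element orbit of $\varepsilon$) impose further unit-equation-type conditions such as $u^i-1$, $u^i-\varepsilon$, or $1+\varepsilon$ being units, none of which is guaranteed by the output of Theorem~\ref{thm:Siegel}; so this is not a routine technicality but an unresolved construction. By contrast, in the paper the auxiliary padding is only $h\mid X(X-1)(X+1)$, and the needed differences $\alpha+\alpha^{-1}$, $\alpha+\alpha^{-1}\pm 1$ are units because they equal $\alpha^{-1}\Phi_4(\alpha)$, $\alpha^{-1}\Phi_3(\alpha)$, $\alpha^{-1}\Phi_6(\alpha)$. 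Your final step — distinguishing $\overline{\Q}$-classes by cross-ratios of branch points, with each class absorbing only finitely many parameter values — is sound and is essentially the paper's Lemma~\ref{lem:cross}; the paper then certifies infinitude by showing the relevant cross-ratios escape $\langle\pm\zeta_{\ell^n},V_{n-1}\rangle$ (Lemmas~\ref{lem:quo} and~\ref{lem:quo2}), which is the quantitative substitute for your ``infinitely many values of $\varepsilon$''.
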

\begin{thm}\label{thm:Shafarevich3}
Let $\baseprime \geq 11$  be an odd prime and let $g = \lfloor \frac{\baseprime - 3}{4} \rfloor$. There are infinitely many
$\overline{\Q}$-isomorphism classes
of genus $g$ hyperelliptic curves
over $\Q_{\infty,\baseprime}$
with good reduction away from $\{\upsilon_2,\upsilon_\baseprime\}$. 
Moreover, if 
\[
	\ell \in \{11, 23, 59, 107, 167, 263, 347, 359 \},
\]
then the Jacobians of these
curves form infinitely many distinct $\Q_{\infty, \baseprime}$-isogeny classes.
\end{thm}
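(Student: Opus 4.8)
The plan is to realize the curves witnessing the theorem as the layers of an explicit tower of cyclotomic origin. Write $m=(\ell-1)/2$ (so $g=\lfloor(m-1)/2\rfloor=\lfloor(\ell-3)/4\rfloor$). For $r\ge 0$ put $\realgen_r=2\cos(2\pi/\ell^{r+1})=\zeta_{\ell^{r+1}}+\zeta_{\ell^{r+1}}^{-1}$, which generates the maximal totally real subfield $\Q(\zeta_{\ell^{r+1}})^+$; since $\Q_{r,\ell}\subseteq\Q(\zeta_{\ell^{r+1}})^+$ with $[\Q(\zeta_{\ell^{r+1}})^+:\Q_{r,\ell}]=m$, the minimal polynomial $P_r\in\Q_{r,\ell}[x]$ of $\realgen_r$ over $\Q_{r,\ell}$ has degree $m$, and I take $C_r\colon y^2=P_r(x)$, a hyperelliptic curve of genus $g$ over $\Q_{r,\ell}\subseteq\Q_{\infty,\ell}$. (For $r=0$ this is the curve $y^2=\Psi_\ell(x)$ over $\Q$, where $\Psi_\ell$ is the minimal polynomial of $2\cos(2\pi/\ell)$.) The point of the theorem is then that the family $\{C_r\}_{r\ge 0}$ does the job.

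First I would check good reduction away from $\{\upsilon_2,\upsilon_\ell\}$. The roots of $P_r$ are the $\Q_{r,\ell}$-conjugates $\zeta^{c}+\zeta^{-c}$ of $\realgen_r$, with $\zeta=\zeta_{\ell^{r+1}}$ and $c$ running over a set of $m$ prime-to-$\ell$ residues modulo $\ell^{r+1}$, no two congruent modulo $\pm1$ (concretely the Teichmüller lifts). From $(\zeta^{c}+\zeta^{-c})-(\zeta^{c'}+\zeta^{-c'})=\zeta^{-c}(\zeta^{c}-\zeta^{c'})(1-\zeta^{-(c+c')})$ and the fact that $\zeta^{k}-1$ generates a power of the unique prime of $\Q(\zeta_{\ell^{r+1}})$ above $\ell$ whenever $\ell\nmid k$, one sees that $\operatorname{disc}(P_r)$ is supported only above $\ell$. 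Hence $C_r$ has good reduction at every prime of $\Q_{r,\ell}$ not above $2$ or $\ell$; bad reduction at $\upsilon_2$ is allowed (and is in any case forced by an equation of the shape $y^2=P_r(x)$). The genus is immediate from the degree–genus formula.

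Next, infinitely many $\overline\Q$-isomorphism classes. It suffices to show each $\overline\Q$-class contains only finitely many $C_r$, for which I would prove that the field of moduli of $C_r$ is unbounded as $r\to\infty$; concretely, that for $\tau\in\Gal(\Q_{r,\ell}/\Q)$ nontrivial the conjugate $C_r^\tau$ is not $\overline\Q$-isomorphic to $C_r$. Such an isomorphism would give $\phi\in\mathrm{PGL}_2(\overline\Q)$ carrying the configuration $\{\zeta^{c}+\zeta^{-c}\}_c$ to $\{\zeta^{tc}+\zeta^{-tc}\}_c$, where $t\in 1+\ell\Z_\ell$ is the relevant value of the cyclotomic character; since $m\ge 5$ for $\ell\ge 11$ this over-determines $\phi$, and the heart of the matter is to show it forces $t\equiv 1$. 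This rigidity of the cyclotomic point configuration under $\mathrm{PGL}_2$ is, I expect, the main obstacle: one approach is to note that a Möbius map matching more than three points of the two configurations is pinned down, and then compare "angle-multiplication by $t$" with an honest Möbius map $\ell$-adically near the common accumulation point $x=2$; alternatively one can argue through reduction at a prime of good reduction and the growth of residue degrees in the tower $\Q_{r,\ell}$.

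Finally the isogeny statement. For each $\ell$ in the displayed list, $m=(\ell-1)/2$ is prime and $2$ is a primitive root modulo $m$, equivalently $\Phi_m$ is irreducible over $\F_2$. Since $C_r$ (being of odd degree $m$, as $\ell\equiv3\pmod 4$ for these $\ell$) has a rational Weierstrass point, $\Jac(C_r)[2]$ is the permutation module on the roots of $P_r$ modulo the all-ones vector, i.e. as a $\Gal(\overline{\Q_{r,\ell}}/\Q_{r,\ell})$-module it is $\F_2[G]/(\Sigma)\cong\F_2[x]/\Phi_m(x)\cong\F_{2^{m-1}}$ with $G=\Gal(\Q(\zeta_\ell)^+/\Q)$ acting through multiplication by an element of order $m$; the hypothesis makes this module irreducible, so $\Jac(C_r)$ is a simple abelian variety over $\Q_{r,\ell}$, and it remains simple over $\Q_{\infty,\ell}$ because (as $\Q_{\infty,\ell}$ is linearly disjoint from $\Q(\zeta_\ell)^+$ over $\Q$) the Galois action still factors faithfully through $G$. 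By Torelli, two $C_r$ are $\overline\Q$-isomorphic exactly when their principally polarized Jacobians are, so the $\Jac(C_r)$ also fall into infinitely many $\overline\Q$-isomorphism classes, hence into infinitely many $\Q_{\infty,\ell}$-isomorphism classes. It then remains to pass from isomorphism classes to isogeny classes over $\Q_{\infty,\ell}$, i.e. to show that a fixed $\Q_{\infty,\ell}$-isogeny class contains only finitely many of these Jacobians. Here I would use that the irreducibility of $\Jac(C_r)[2]$ forces a unique $\Gal(\overline{\Q_{\infty,\ell}}/\Q_{\infty,\ell})$-stable lattice up to homothety in $T_2$, which together with control of $\End(\Jac(C_r))$ (via $\End(\Jac(C_r))\otimes\F_2\hookrightarrow\End_{\F_2[G]}(\Jac(C_r)[2])=\F_{2^{m-1}}$, forcing the endomorphism algebra to stay small) pins down the isogeny class; alternatively one compares the systems of Frobenius traces at a fixed auxiliary prime of good reduction after base change to a common layer. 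This last passage is the second place where genuine work is required, and is presumably why the statement is restricted to the listed primes $\ell$.
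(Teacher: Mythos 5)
Your construction is in fact the same as the paper's: for $g=\lfloor(\ell-3)/4\rfloor$ the curve $D_n$ of Section~\ref{sec:hyp} is exactly $y^2=\prod_{\sigma\in G_n}(X-\eta_1^\sigma)$ with $\eta_1=\zeta_{\ell^n}+\zeta_{\ell^n}^{-1}$, i.e.\ the minimal polynomial of $\zeta_{\ell^n}+\zeta_{\ell^n}^{-1}$ over $\Q_{n-1,\ell}$, and your good-reduction and genus checks coincide with Lemma~\ref{lem:hyperelliptic}. But both of the steps that actually carry the theorem are left as acknowledged gaps. First, to get infinitely many $\overline{\Q}$-isomorphism classes you reduce everything to a ``rigidity of the cyclotomic point configuration under $\mathrm{PGL}_2$'' which you yourself flag as the main obstacle and for which you only gesture at two possible strategies; nothing is proved. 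The paper does not argue via fields of moduli at all: it compares two layers $n>m$ directly through cross-ratios of Weierstrass points (Lemma~\ref{lem:cross}), which by the identity \eqref{eqn:alphabeta} are explicit cyclotomic $S$-units in $V_n$, and then uses the structure of $V_n$ modulo $\langle\pm\zeta_{\ell^n},V_{n-1}\rangle$ (Lemmas~\ref{lem:quo}, \ref{lem:quo2}, \ref{lem:count}) to show the relevant cross-ratio cannot lie in $V_m$. Your sketch contains no substitute for this computation.

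Second, the isogeny statement. Your observation that $J[2]\cong\F_2[x]/\Phi_p(x)$ is irreducible when $2$ is a primitive root mod $p=(\ell-1)/2$ matches the paper's Lemma~\ref{lem:prim}/\ref{lem:evenisog}, but you only use it to conclude simplicity of each Jacobian, which is beside the point: what is needed is that $J(D_n)$ and $J(D_m)$ are \emph{not isogenous over $\Q_{\infty,\ell}$} for $n>m$ large. Your two proposed routes (uniqueness of the stable $2$-adic lattice plus ``control of $\End$'', or comparison of Frobenius traces ``after base change to a common layer'') are not developed and do not obviously close; in particular there is no common finite layer over which all the $J(D_n)$ live, and no mechanism is offered for showing traces or endomorphisms actually differ. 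The paper's route is: irreducibility reduces to odd-degree isogenies (Lemma~\ref{lem:evenisog}); an odd-degree isogeny forces $\Q_{\infty,\ell}(J_n[4])=\Q_{\infty,\ell}(J_m[4])$; by the explicit description of the $4$-torsion field (Lemma~\ref{lem:4torsion}) this puts a square root of $\realgen_{n,2}-\realgen_{n,1}$ in a multiquadratic extension generated by differences at level $m$, and the Kummer-theoretic Lemma~\ref{lem:fieldsquareroots} together with Lemma~\ref{lem:classnumber} — which needs $h_n^+$ odd for all $n$, via Kummer's index theorem and the results of Estes and Ichimura--Nakajima — forces $(1-\zeta^{a+1})(1-\zeta^{a-1})\in\langle\pm\zeta_{\ell^n},V_m,V_n^2\rangle$, contradicting Corollary~\ref{cor:quo3}. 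This class-number input is the second reason the prime list is what it is, and it is entirely absent from your proposal; as written, the passage from ``infinitely many isomorphism classes'' to ``infinitely many isogeny classes'' is a genuine gap.
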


\bigskip

The paper is structured as follows. In Section \ref{sec:units}
we recall basic results on units and $S$-units
of the cyclotomic field $\Q(\zeta_{\ell^n})$.
In Sections \ref{sec:sunit+}--\ref{sec:Q7}
we employ identities between cyclotomic polynomials
to give constructive proofs of Theorems~\ref{thm:Siegel} and~\ref{thm:Siegel2}.
Section~\ref{sec:elliptic} gives a proof of Theorem~\ref{thm:finite},
making use of a deep theorem of Kato to control the $\Q_{\infty,\ell}$-points
on certain modular curves. Section~\ref{sec:antishafarevich}
uses the integral and $S$-integral points on $\PP^1 \setminus \{0,1,\infty\}$
furnished by Theorem~\ref{thm:Siegel} to construct infinite
families of elliptic curves over $\Q_{\infty,\ell}$ for $\ell=2$, $3$, $5$, $7$,
with good reduction away from $\{\upsilon_2,\upsilon_\ell\}$, 
which are used to give a proof of Theorem~\ref{thm:Shafarevich}.
Sections~\ref{sec:hyp} and~\ref{sec:isog} give
proofs of Theorems~\ref{thm:Shafarevich2} and~\ref{thm:Shafarevich3},
making use of the relation, due to Kummer, between the class number of
$\Q(\zeta_{\ell^n})^+$, and the index of cyclotomic units
in the full group of units.

\bigskip

We are grateful to Minhyong Kim for drawing our attention
to the conjecture of Parshin and Zarhin, and to 
Alain Kraus and David Loeffler for useful discussions.

\section{Units and $S$-units of $\Q(\zeta)$}\label{sec:units}
Let $K$ be a subfield of $\overline{\Q}$.
We denote the integers of $K$ (i.e.\ the integral closure
of $\Z$ in $K$) by $\OO(K)$.
Let $p$ be a rational prime.
By a \textbf{prime of $K$ above $p$} we mean 
a map $\upsilon : K \rightarrow \Q \cup \{\infty\}$
satisfying the following
\begin{itemize}
\item $\upsilon(p)=1$, $\upsilon(0)=\infty$;
\item $\upsilon \vert_{K^\times} : K^\times \rightarrow \Q$ is a homomorphism;
\item $\upsilon(1+b)=0$ whenever $\upsilon(b)>0$.
\end{itemize}
Suppose $K=\cup K_n$ where $K_0 \subset K_1 \subset K_2 \subset \cdots$
is a tower of number fields (i.e.\ finite extensions of $\Q$), with $K_0=\Q$.
One sees that the primes of $K$ above $p$
are in $1$--$1$ correspondence with sequences 
$\{\fp_n\}$ where 
\begin{itemize}
	\item $\fp_n$ is a prime ideal of $\OO(K_n)$;
	\item $\fp_{n+1} \mid \fp_n \OO(K_{n+1})$;
\item $\fp_0=p \Z$.
\end{itemize}
Indeed, from $\upsilon$ one obtains the corresponding
sequence $\{ \fp_n\}$ via
the formula $\fp_n=\{\alpha \in \OO(K_n) \; : \; \upsilon(\alpha)>0\}$.
Given a sequence $\{ \fp_n\}$, we can recover 
the corresponding $\upsilon$ by letting 
\[
\upsilon(\alpha)=\ord_{\fp_n}(\alpha)/\ord_{\fp_n}(p)
\]
whenever $\alpha \in K_n^\times$.
Given a finite set of primes $S$ of $K$, we define the $S$-integers of $K$ to be
the set $\OO(K,S)$ of all $\alpha \in K$ such that $\upsilon(\alpha) \ge 0$
for every prime $\upsilon \notin S$. 
We let $\OO(K,S)^\times$ be the unit group of $\OO(K,S)$;
this is precisely the set of $\alpha \in K^\times$
such that $\upsilon(\alpha)=0$
for every prime $\upsilon \notin S$.
If $S=\emptyset$ then
$\OO(K,S)=\OO(K)$ are the integers of $K$
and $\OO(K,S)^\times=\OO(K)^\times$ are the units
of $K$.

\bigskip

Fix a rational prime $\ell$. For a positive integer $n$,
let
 $\zeta_{\ell^n}$ denote a primitive $\ell^n$-th root of $1$ which
is chosen so that 
\[
\zeta_{\ell^{n+1}}^\ell=\zeta_{\ell^n}.
\]
Let $\Omega_{n,\ell}=\Q(\zeta_{\ell^n})$; this has
degree $\varphi(\ell^n)$ where $\varphi$ is Euler totient function.
Let
\[
	\Omega_{\infty,\ell}=\bigcup_{n=1}^\infty \Omega_{n,\ell}.
\]
The prime $\ell$ is totally ramified in 
each $\Omega_{n,\ell}$, and we denote by $\lambda_n$
the unique prime ideal of $\OO({\Omega_{n,\ell}})$ above $\baseprime$.
Thus
\begin{equation}\label{eqn:ramify}
	\ell \cdot \OO({\Omega_{n,\ell}}) \, =\, \lambda_n^{\varphi(\ell^n)}.
\end{equation}
We write $\upsilon_\ell$ for the unique prime of $\Omega_{\infty,\ell}$
above $\ell$. 
For now fix $n \ge 1$
if $\ell \ne 2$ and $n \ge 2$ if $\ell=2$. 
We recall that
 $\lambda_n=(1-\zeta_{\ell^n}) \cdot \OO({\Omega_{n,\ell}})$.
If $\ell \nmid s$ then 
$(1-\zeta_{\ell^n}^s) \cdot \OO({\Omega_{n,\ell}})=\lambda_n$;
we can see this by applying the automorphism $\zeta_{\ell^n} \mapsto \zeta_{\ell^n}^s$
to \eqref{eqn:ramify}.
\begin{lem}\label{lem:cycval1}
	Let $s$ be an integer and let $t=\ord_{\ell}(s)$.
	Suppose $t<n$. Then
\[
	(1-\zeta_{\ell^n}^s) \cdot \OO({\Omega_{\level,\ell}}) \; =\; \lambda_n^{\ell^t}.
\]
Moreover,
\[
\upsilon_\ell(1-\zeta_{\ell^n}^s)=
\frac{1}{\ell^{n-1-t} (\ell-1)}.
\]
\end{lem}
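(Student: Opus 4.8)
The plan is to reduce everything to the case $\ell \nmid s$ recorded just before the statement. Write $s = \ell^t u$ with $\ell \nmid u$, and recall that the roots of unity were chosen compatibly, so that $\zeta_{\ell^m}^\ell = \zeta_{\ell^{m-1}}$ for every $m$. Iterating this $t$ times gives $\zeta_{\ell^n}^{\ell^t} = \zeta_{\ell^{n-t}}$, hence
\[
\zeta_{\ell^n}^s = \bigl(\zeta_{\ell^n}^{\ell^t}\bigr)^u = \zeta_{\ell^{n-t}}^u .
\]
Since $t < n$ we have $n-t \ge 1$, and since $\ell \nmid u$ the element $\zeta_{\ell^{n-t}}^u$ is a primitive $\ell^{n-t}$-th root of unity. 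Thus $1 - \zeta_{\ell^n}^s = 1 - \zeta_{\ell^{n-t}}^u$ already lies in the subfield $\Omega_{n-t,\ell}$, and by the observation preceding the lemma, applied with $n$ replaced by $n-t$, it generates the prime $\lambda_{n-t}$ of $\OO(\Omega_{n-t,\ell})$. The only configuration not literally covered by that observation is $\ell = 2$ with $n-t = 1$, where $\Omega_{1,2} = \Q$, $\zeta_2^u = -1$, and $1 - \zeta_2^u = 2$ generates $\lambda_1 = 2\Z$ directly.

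Next I would push this ideal up to $\OO(\Omega_{n,\ell})$. Because $\ell$ is totally ramified in every layer, multiplicativity of ramification indices in the tower $\Q \subset \Omega_{n-t,\ell} \subset \Omega_{n,\ell}$ gives that the ramification index of $\lambda_n$ over $\lambda_{n-t}$ equals $\varphi(\ell^n)/\varphi(\ell^{n-t}) = \ell^t$ (equivalently $[\Omega_{n,\ell}:\Omega_{n-t,\ell}]$, since the extension is totally ramified at $\ell$), so $\lambda_{n-t}\OO(\Omega_{n,\ell}) = \lambda_n^{\ell^t}$. Combining with the previous paragraph,
\[
(1 - \zeta_{\ell^n}^s)\,\OO(\Omega_{n,\ell}) = \lambda_{n-t}\OO(\Omega_{n,\ell}) = \lambda_n^{\ell^t},
\]
which is the first assertion. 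For the valuation I would then just unwind the normalization defining $\upsilon_\ell$: from $\ell\,\OO(\Omega_{n,\ell}) = \lambda_n^{\varphi(\ell^n)}$ we get $\ord_{\lambda_n}(\ell) = \varphi(\ell^n) = \ell^{n-1}(\ell-1)$, while $\ord_{\lambda_n}(1-\zeta_{\ell^n}^s) = \ell^t$ by the first part, so
\[
\upsilon_\ell(1-\zeta_{\ell^n}^s) = \frac{\ord_{\lambda_n}(1-\zeta_{\ell^n}^s)}{\ord_{\lambda_n}(\ell)} = \frac{\ell^t}{\ell^{n-1}(\ell-1)} = \frac{1}{\ell^{n-1-t}(\ell-1)} .
\]

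I do not expect a genuine obstacle here: the whole content is the compatibility of the chosen roots of unity together with the multiplicativity of ramification in the cyclotomic tower. The only place demanding a little care is the bookkeeping of the degenerate bottom layer $\Omega_{1,2} = \Q$ when $\ell = 2$, handled as indicated above, and keeping track that the hypothesis $t < n$ is exactly what guarantees $n - t \ge 1$ so that $\zeta_{\ell^{n-t}}^u$ is a nontrivial root of unity.
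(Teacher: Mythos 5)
Your proof is correct and follows essentially the same route as the paper: observe that $\zeta_{\ell^n}^s$ is a primitive $\ell^{n-t}$-th root of unity generating $\lambda_{n-t}$, push the ideal up via total ramification (degree $\ell^t$), and unwind the normalization of $\upsilon_\ell$. The extra care you take with the degenerate layer $\Omega_{1,2}=\Q$ is a sensible addition but does not change the argument.
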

\begin{proof}
	Write $\zeta=\zeta_{\ell^n}$.
	Note that $\zeta^s$
	is a primitive $\ell^{n-t}$-th root of $1$.
	Thus 
	\[
		(1-\zeta^s) \cdot \OO(\Omega_{n-t, \baseprime}) \; =\; \lambda_{n-t}.
	\]
	As $\ell$ is totally ramified
	in $\Omega_{n,\ell}$, we have 
	\[
		(1-\zeta^s)  \cdot \OO(\Omega_{n,\ell}) \;
	=\; \lambda_n^{[\Omega_{n,\ell} \, :\, \Omega_{n-t,\ell}]} \; =\;
	\lambda_n^{\ell^t}.
	\]
For the final part of the lemma,
\[
\upsilon_\ell(1-\zeta^s)=\frac{\ord_{\lambda_n}(1-\zeta^s)}{\ord_{\lambda_n}(\ell)}=\frac{\ell^t}{\varphi(\ell^n)}=
\frac{1}{\ell^{n-1-t} (\ell-1)}.
\]
\end{proof}

\subsection*{Cyclotomic units and $S$-units}
Write $V_n$ for the subgroup of $\OO(\Omega_n,\{\upsilon_\ell\})^\times$
generated by
\[
	\left\{\pm \zeta_{\ell^n}, \quad 1-\zeta_{\ell^n}^k \; : \; 
	1 \le k < \ell^n
	\right\}
\]
and let
\[ 
C_n=V_n \cap \OO(\Omega_n)^\times.
\]
The group $C_n$ is called \cite[Chapter 8]{Washington} the group of \textbf{cyclotomic units} in $\Omega_n$. We will often find it more convenient
to work with group $V_n$.
\begin{lem}\label{lem:Vnbasis}
The abelian group
$V_n/\langle \pm \zeta_{\ell}^n \rangle$
is free with basis
\begin{equation}\label{eqn:Vnbasis}
	\left\{ 1-\zeta_{\ell^n}^k \; : \; 1 \le k < \ell^n/2,
	\quad \ell \nmid k \right\}.
\end{equation}
\end{lem}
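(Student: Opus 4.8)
The plan is to first strip the given generating set of $V_n$ down to the set \eqref{eqn:Vnbasis} (together with the roots of unity), then observe that the quotient $W_n:=V_n/\langle\pm\zeta_{\ell^n}\rangle$ is a torsion-free abelian group whose $\Z$-rank equals the cardinality of \eqref{eqn:Vnbasis}; freeness with the stated basis is then automatic.

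Writing $\zeta=\zeta_{\ell^n}$, the first step uses two elementary cyclotomic identities. Evaluating the factorization $X^\ell-\zeta_{\ell^s}^a=\prod_{j=0}^{\ell-1}\bigl(X-\zeta_{\ell^{s+1}}^{a+j\ell^s}\bigr)$ at $X=1$ gives $1-\zeta_{\ell^s}^a=\prod_{j=0}^{\ell-1}\bigl(1-\zeta_{\ell^{s+1}}^{a+j\ell^s}\bigr)$ for every $s\ge 1$. If $\ell\mid k$ then $\ord_\ell(k)\le n-1$, and applying this identity repeatedly (raising the level from $n-\ord_\ell(k)$ up to $n$, the exponents remaining coprime to $\ell$ throughout) expresses the generator $1-\zeta^k$ as a product of generators $1-\zeta^j$ with $\ell\nmid j$. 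Next, $1-\zeta^{-m}=-\zeta^{-m}(1-\zeta^m)$ shows that modulo $\langle\pm\zeta\rangle$ one has $\overline{1-\zeta^{\ell^n-k}}=\overline{1-\zeta^k}$; since $k\mapsto\ell^n-k$ is a fixed-point-free involution of $\{1\le k<\ell^n:\ell\nmid k\}$, this lets me discard all generators with $k>\ell^n/2$. Hence $W_n$ is generated by the $\tfrac12\varphi(\ell^n)$ elements of \eqref{eqn:Vnbasis}.

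For the second step, $W_n$ is torsion-free because $\langle\pm\zeta_{\ell^n}\rangle$ is precisely the group of roots of unity in $\Omega_n$, hence the torsion subgroup of $\OO(\Omega_n,\{\upsilon_\ell\})^\times\supseteq V_n$. It then remains to show $\operatorname{rank}_\Z V_n=\tfrac12\varphi(\ell^n)$. By Dirichlet's unit theorem $\OO(\Omega_n,\{\upsilon_\ell\})^\times$ has rank $\tfrac12\varphi(\ell^n)$ (as $\Omega_n$ is totally imaginary of degree $\varphi(\ell^n)$ and $S=\{\upsilon_\ell\}$ is a single prime), so it suffices to prove that $V_n$ has finite index in it. By Lemma~\ref{lem:cycval1}, each $1-\zeta^k$ with $\ell\nmid k$ has $\upsilon_\ell$-value $\tfrac{1}{\ell^{n-1}(\ell-1)}\ne0$, so $V_n\not\subseteq\OO(\Omega_n)^\times$; combining this with the classical fact that the cyclotomic units $C_n=V_n\cap\OO(\Omega_n)^\times$ have finite index in $\OO(\Omega_n)^\times$ (\cite[Chapter~8]{Washington}, Kummer's theorem that the index is $h_n^+$) shows that $V_n$ has finite index in the $S$-unit group. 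Finally, a torsion-free abelian group of rank $r$ generated by $r$ elements is free of rank $r$ with those elements as a basis: the induced surjection $\Z^r\to\Z^r$ becomes an isomorphism after tensoring with $\Q$, hence is injective. Taking $r=\tfrac12\varphi(\ell^n)$ completes the proof.

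The main obstacle is the full-rankness input of the previous paragraph, namely that $V_n$ (equivalently, that the cyclotomic units $C_n$) spans a lattice of maximal rank; every other step is formal, being manipulation of cyclotomic polynomial identities together with elementary abelian-group theory. Full-rankness is where genuine arithmetic enters — through the non-vanishing of Dirichlet $L$-functions at $s=1$, equivalently the non-vanishing of the cyclotomic regulator, equivalently Kummer's index formula. If one wanted a self-contained argument one could instead prove independence directly by checking that the Dedekind-type determinant $\det\bigl(\log|1-\zeta^{ak}|\bigr)$, with $a$ running over $(\Z/\ell^n\Z)^\times/\{\pm1\}$ and $k$ over \eqref{eqn:Vnbasis}, is nonzero; but this evaluation again reduces to the same $L(1,\chi)\ne0$ input.
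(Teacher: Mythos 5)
Your proof is correct and follows essentially the same route as the paper: the same two cyclotomic identities reduce the generators to \eqref{eqn:Vnbasis}, torsion-freeness comes from the roots of unity being the full torsion subgroup, and the rank count rests on the classical fact that $C_n$ has finite index in $\OO(\Omega_n)^\times$. The only (immaterial) difference is that you obtain $\operatorname{rank} V_n=\tfrac12\varphi(\ell^n)$ by showing $V_n$ has finite index in the $S$-unit group and invoking Dirichlet's $S$-unit theorem, whereas the paper uses the exact sequence $1\to C_n\to V_n\xrightarrow{\ord_{\lambda_n}}\Z\to 1$ directly.
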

\begin{proof}
	The torsion subgroup of $V_n$ is the torsion
	subgroup of $\Omega_n^\times$ which is $\langle \pm \zeta_{\ell^n}
	\rangle$. Thus $V_n/\langle \pm \zeta_{\ell^n} \rangle$
	is torsion free.
By definition of $V_n$, the group $V_n/\langle \pm \zeta_{\ell^n} \rangle$
	is generated by $1-\zeta_{\ell^n}^k$ with $\ell^n \nmid k$.
	Write $k=\ell^r d$ with $\ell \nmid d$; thus
	$r<n$. Suppose $r \ge 1$. Then,
\[
	\begin{split}
		1-\zeta_{\ell^n}^k & =1-\zeta_{\ell^n}^{\ell^r d} \\
		 & = \prod_{i=0}^{\ell^r-1} (1-\zeta_{\ell^n}^d \zeta_{\ell^r}^i)
		\qquad \text{using} \quad 1-X^{\ell^r}=\prod_{i=0}^{\ell^r-1} (1-\zeta_{\ell^r}^i X)\\
		& = \prod_{i=0}^{\ell-1} (1-\zeta_{\ell^n}^{d+i \ell^{n-r}} ).
	\end{split}
\]
It follows that $V_n/\langle \pm \zeta_{\ell^n}\rangle$ is generated
by $1-\zeta_{\ell^n}^k$ with $\ell \nmid k$.
If $\ell^n/2 < k < \ell^n$ and $\ell \nmid k$ then
\begin{equation}\label{eqn:residueexp}
	1-\zeta_{\ell^n}^k = -\zeta_{\ell^n}^{k} (1-\zeta_{\ell^n}^{\ell^n-k}).
\end{equation}
Thus \eqref{eqn:Vnbasis}
certainly generates $V_n/\langle \pm \zeta_{\ell}^n \rangle$.
Note that \eqref{eqn:Vnbasis} has cardinality
$\varphi(\ell^n)/2$ where $\varphi$ is the Euler totient
function. It therefore suffices to show that $V_n$
has rank $\varphi(\ell^n)/2$.
A well-known theorem \cite[Theorem 8.3]{Washington} states
that $C_n$ has finite index in $\OO(\Omega_n)^\times$
and thus, by Dirichlet's unit theorem,
$C_n$ has rank $-1+\varphi(\ell^n)/2$.
We note that $C_n$ is the kernel of the surjective
homomorphism $V_n \rightarrow \Z$, sending $\mu$
to $\ord_{\lambda_n}(\mu)$. Thus $V_n$ has rank $\varphi(\ell^n)/2$
completing the proof.
\end{proof}

\begin{lem}\label{lem:quo}
Let $n \ge 2$ if $\ell \ne 2$ and $n \ge 3$ if $\ell=2$. 
	Then $V_{n-1} \subset V_n$. Moreover,
	\[
		\prod_{\substack{1 \le k <\ell^n/2\\ \ell \nmid k}} (1-\zeta_{\ell^n}^k)^{c_k}  \; \in \; \langle \pm \zeta_{\ell^n}, \, V_{n-1} \rangle
	\]
	if and only if $c_k=c_m$ whenever $k \equiv m \pmod{\ell^{n-1}}$.
\end{lem}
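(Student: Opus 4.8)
The plan is to dispose of the inclusion $V_{n-1}\subseteq V_n$ directly, and then to translate the ``moreover'' part into an elementary fact about a map between the two free abelian groups furnished by Lemma~\ref{lem:Vnbasis}. For the inclusion: $V_{n-1}$ is generated by $\pm\zeta_{\ell^{n-1}}$ together with the elements $1-\zeta_{\ell^{n-1}}^k$ for $1\le k<\ell^{n-1}$. Since $\zeta_{\ell^{n-1}}=\zeta_{\ell^n}^{\ell}$ the first generator already lies in $\langle\pm\zeta_{\ell^n}\rangle\subseteq V_n$, and applying the identity $1-X^\ell=\prod_{i=0}^{\ell-1}(1-\zeta_\ell^i X)$ of the proof of Lemma~\ref{lem:Vnbasis} with $X=\zeta_{\ell^n}^k$ gives
\[
1-\zeta_{\ell^{n-1}}^k \;=\; 1-\zeta_{\ell^n}^{\ell k}\;=\;\prod_{i=0}^{\ell-1}\bigl(1-\zeta_{\ell^n}^{\,k+i\ell^{n-1}}\bigr),
\]
a product of generators of $V_n$; hence $V_{n-1}\subseteq V_n$.

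For the equivalence I would pass to the quotients by roots of unity. Put $\overline V_m=V_m/\langle\pm\zeta_{\ell^m}\rangle$, and write $b_k$ for the class of $1-\zeta_{\ell^n}^k$ in $\overline V_n$ and $b'_j$ for the class of $1-\zeta_{\ell^{n-1}}^j$ in $\overline V_{n-1}$; by Lemma~\ref{lem:Vnbasis} these groups are free with bases $\{b_k:1\le k<\ell^n/2,\ \ell\nmid k\}$ and $\{b'_j:1\le j<\ell^{n-1}/2,\ \ell\nmid j\}$ respectively. The torsion subgroup $\langle\pm\zeta_{\ell^{n-1}}\rangle$ of $V_{n-1}$ sits inside $\langle\pm\zeta_{\ell^n}\rangle$, so the inclusion above induces a homomorphism $\iota\colon\overline V_{n-1}\to\overline V_n$ whose image is precisely $\langle\pm\zeta_{\ell^n},V_{n-1}\rangle/\langle\pm\zeta_{\ell^n}\rangle$; in fact $\iota$ is injective, since any element of $V_{n-1}\cap\langle\pm\zeta_{\ell^n}\rangle$ is a root of unity of $\Omega_{n-1}$ and so already lies in $\langle\pm\zeta_{\ell^{n-1}}\rangle$. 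Thus the displayed product lies in $\langle\pm\zeta_{\ell^n},V_{n-1}\rangle$ if and only if $\sum_k c_k b_k$ is a $\Z$-linear combination of the elements $\iota(b'_j)$.

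The final step is to compute $\iota$ on the basis. Expanding $1-\zeta_{\ell^{n-1}}^j=\prod_{i=0}^{\ell-1}(1-\zeta_{\ell^n}^{\,j+i\ell^{n-1}})$ as above and using \eqref{eqn:residueexp} to fold each exponent exceeding $\ell^n/2$ back into the basis range, one obtains
\[
\iota(b'_j)\;=\;\sum_{\substack{1\le k<\ell^n/2,\ \ell\nmid k\\ k\equiv \pm j\ (\mathrm{mod}\ \ell^{n-1})}} b_k .
\]
The combinatorial heart of the matter is that, as $j$ runs over the basis indices of $\overline V_{n-1}$, the sets $B_j=\{\,1\le k<\ell^n/2:\ \ell\nmid k,\ k\equiv\pm j\ (\mathrm{mod}\ \ell^{n-1})\,\}$ partition the basis index set of $\overline V_n$ into $\varphi(\ell^{n-1})/2$ blocks of size $\ell$ (the cardinalities are consistent since $\ell\cdot\varphi(\ell^{n-1})/2=\varphi(\ell^n)/2$). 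This is exactly where the hypothesis on $n$ enters: $n\ge 2$ — and $n\ge 3$ when $\ell=2$ — forces $\ell^{n-1}>2$, hence no basis index $j$ satisfies $j\equiv -j\pmod{\ell^{n-1}}$, and consequently the $\ell$ folded terms in the display are $\ell$ \emph{distinct} basis elements, each with coefficient $1$ (for $\ell=2$, $n=2$ this already fails, as then $\iota(b'_1)=2b_1$). Granting the partition statement, the $\iota(b'_j)$ are the block sums of a partition of a $\Z$-basis of $\overline V_n$, so they are part of a $\Z$-basis of $\overline V_n$; therefore $\sum_k c_k b_k$ is a $\Z$-combination of them precisely when $(c_k)$ is constant on each block $B_j$, which is the condition $c_k=c_m$ whenever $k\equiv m\pmod{\ell^{n-1}}$ in the statement, read with the identification $1-\zeta_{\ell^n}^k\sim 1-\zeta_{\ell^n}^{\ell^n-k}$ supplied by \eqref{eqn:residueexp}. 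I expect the only genuinely laborious point to be the middle display: one must track which of the exponents $j+i\ell^{n-1}$ already lie in $(0,\ell^n/2)$ and which must be reflected, and check that the reflected ones fill out exactly the residues $\equiv -j$ among the basis indices; after that the argument is formal $\Z$-linear algebra.
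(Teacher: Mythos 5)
Your argument is correct, and it is essentially the paper's own proof with the bookkeeping written out: the paper likewise expands $1-\zeta_{\ell^{n-1}}^{d}=\prod_{i=0}^{\ell-1}(1-\zeta_{\ell^n}^{d+i\ell^{n-1}})$ and then simply says that the lemma follows from Lemma~\ref{lem:Vnbasis}; your computation of $\iota(b'_j)$ as a block sum, and the check that the blocks partition the basis (which is where $n\ge 2$, resp.\ $n\ge 3$ for $\ell=2$, enters via $j\not\equiv -j\pmod{\ell^{n-1}}$), is exactly the detail the paper leaves implicit. One consequence of your care deserves to be stated plainly: the criterion your proof actually yields is constancy of $(c_k)$ on the classes $k\equiv\pm m\pmod{\ell^{n-1}}$, and this is genuinely stronger than the printed condition $k\equiv m\pmod{\ell^{n-1}}$, so the ``if'' direction of the lemma as literally stated is false. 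For instance, with $\ell=5$ and $n=2$, the element $(1-\zeta_{25})(1-\zeta_{25}^{6})(1-\zeta_{25}^{11})$ has exponent vector constant on congruence classes modulo $5$, yet it does not lie in $\langle\pm\zeta_{25},V_{1}\rangle$, whose image in $V_2/\langle\pm\zeta_{25}\rangle$ is spanned by $b_1+b_4+b_6+b_9+b_{11}$ and $b_2+b_3+b_7+b_8+b_{12}$. So your remark that the statement must be ``read with the identification $1-\zeta_{\ell^n}^k\sim 1-\zeta_{\ell^n}^{\ell^n-k}$'' is not a cosmetic gloss but the correct form of the statement. Fortunately this does not disturb the rest of the paper: every later invocation of the lemma (in Lemmas~\ref{lem:quo2}, \ref{lem:cycsunit}, \ref{lem:Q5} and \ref{lem:noniso}) uses only the ``only if'' direction, which remains valid, since membership forces constancy on the $\pm$-blocks and hence on the finer congruence classes.
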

\begin{proof}
The group $V_{n-1}$ is generated, modulo roots of unity,
by $1-\zeta_{\ell^{n-1}}^d$ with $\ell \nmid d$. By the
	proof of Lemma~\ref{lem:Vnbasis},
\[
	\begin{split}
		1-\zeta_{\ell^{n-1}}^d  = 
		1-\zeta_{\ell^n}^{\ell d} 
		 = \prod_{i=0}^{\ell-1} (1-\zeta_{\ell^n}^{d+i \ell^{n-1}} ).
	\end{split}
\]
	The lemma follows from Lemma~\ref{lem:Vnbasis}.
\end{proof}

Given $a \in \Z_\ell$, it makes sense to reduce $a$ modulo $\ell^n$
and therefore it makes sense to write $\zeta_{\ell^n}^a$.
We write $\{a\}_n$ for the unique integer satisfying
\[
	0 \le \{a\}_n < \ell^n/2, \qquad \{a\}_n \equiv \pm a \pmod{\ell^n}.
\]
\begin{lem}\label{lem:quo2}
Let $a_1,\dotsc,a_r \in \Z_{\ell}$ 
and $c_1,\dotsc,c_r \in \Z$.
Suppose
	\begin{enumerate}[(i)]
	\item $c_1 \ne 0$.
	\item $a_1 \not \equiv 0 \pmod{\ell}$.
	\item $a_1 \ne \pm a_2,\pm a_3,\cdots \pm a_r \pmod{\ell^n}$.
	\end{enumerate}
Write
\begin{equation}\label{eqn:varepsilonn}
	\varepsilon_n \; =\; 
	\prod_{1 \le i \le r} (1-\zeta_{\ell^n}^{a_i})^{c_i}.
\end{equation}
Then, $\varepsilon_n \notin \langle \pm \zeta_{\ell^n}, V_{n-1}\rangle$
for all sufficiently large $n$.
\end{lem}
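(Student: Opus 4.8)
The plan is to reduce the statement to the basis description of $V_n/\langle \pm\zeta_{\ell^n}\rangle$ from Lemma~\ref{lem:Vnbasis} together with the criterion in Lemma~\ref{lem:quo} for when an element lies in $\langle \pm\zeta_{\ell^n}, V_{n-1}\rangle$. Recall that Lemma~\ref{lem:quo} says that a product $\prod_{1\le k<\ell^n/2,\ \ell\nmid k}(1-\zeta_{\ell^n}^k)^{d_k}$ lies in $\langle\pm\zeta_{\ell^n},V_{n-1}\rangle$ precisely when the exponent vector $(d_k)$ is constant on residue classes mod $\ell^{n-1}$. So the strategy is to rewrite $\varepsilon_n$ in terms of the canonical basis and read off the exponent vector, then show it fails to be constant on some residue class mod $\ell^{n-1}$ for all large $n$.

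First I would normalize each factor. Using \eqref{eqn:residueexp}, for each $i$ with $a_i \not\equiv 0 \pmod{\ell}$ we have $1-\zeta_{\ell^n}^{a_i} = \pm\zeta_{\ell^n}^{*}(1-\zeta_{\ell^n}^{\{a_i\}_n})$, so modulo roots of unity the factor $(1-\zeta_{\ell^n}^{a_i})^{c_i}$ contributes $c_i$ to the basis exponent indexed by $\{a_i\}_n$. For the indices $i$ with $\ell \mid a_i$, the proof of Lemma~\ref{lem:Vnbasis} shows $1-\zeta_{\ell^n}^{a_i}$ expands as a product of $1-\zeta_{\ell^n}^{b}$ over $b$ in a union of full residue classes mod $\ell^{n-1}$ (or higher), hence its contribution to the exponent vector is automatically constant on residue classes mod $\ell^{n-1}$; these terms are therefore harmless and can be absorbed. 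So after collecting, the exponent vector $(d_k)$ of $\varepsilon_n$ in the basis \eqref{eqn:Vnbasis} satisfies: for each primitive $k=\{a_i\}_n$ arising from an $\ell$-prime $a_i$, $d_k$ equals the sum of those $c_i$ with $\{a_i\}_n = k$, plus a contribution coming only from the $\ell$-divisible $a_j$'s that is constant on each class mod $\ell^{n-1}$.

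Now I would exploit hypotheses (i)--(iii). Set $k_1 = \{a_1\}_n$; by (ii) this is a primitive residue. By (iii), for $n$ large enough, no other $a_i$ (with $i\ge 2$) is congruent to $\pm a_1$ mod $\ell^n$, so $\{a_i\}_n \ne k_1$ for all $i\ge 2$ with $\ell\nmid a_i$; hence the only "primitive" contribution to $d_{k_1}$ is $c_1$, which is nonzero by (i). To derive a contradiction with membership in $\langle\pm\zeta_{\ell^n},V_{n-1}\rangle$, I would compare $d_{k_1}$ with $d_{k'}$ for a suitably chosen $k' \equiv k_1 \pmod{\ell^{n-1}}$, $k' \ne k_1$, $\ell\nmid k'$, lying in the fundamental range $0\le k' < \ell^n/2$: for $n$ large such a $k'$ exists and, again using (iii) and enlarging $n$, no $a_i$ with $i\ge 2$ hits $k'$ either, while $a_1$ itself does not hit $k'$ since $\{a_1\}_n = k_1 \ne k'$. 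Then $d_{k_1} - d_{k'} = c_1 \ne 0$ because the $\ell$-divisible contributions cancel (they are equal on the class mod $\ell^{n-1}$), contradicting Lemma~\ref{lem:quo}.

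The main obstacle I anticipate is bookkeeping around two subtleties: first, that $a_1,\dots,a_r$ are genuine $\ell$-adic integers, not integers, so "$\{a_i\}_n$" and all congruences must be handled via reduction mod $\ell^n$, and one must verify that hypothesis (iii) — stated as an inequality mod $\ell^n$ for the fixed $n$ in the statement — can be upgraded to hold for all large $n$, which follows because $a_1 \mp a_i$ is a fixed nonzero $\ell$-adic number (for $a_i \ne \pm a_1$ in $\Z_\ell$) so its $\ell$-adic valuation is finite and bounded; one should also treat separately the degenerate case where some $a_i = \pm a_1$ in $\Z_\ell$ exactly — but then (iii) fails for every $n$ unless... in fact (iii) as written for one $n$ already forces $a_i\not\equiv\pm a_1$, and if $a_i=\pm a_1$ in $\Z_\ell$ this contradicts (iii), so that case does not occur. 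Second, one must make sure the range condition $k' < \ell^n/2$ and the existence of a valid $k'$ in the class of $k_1$ mod $\ell^{n-1}$ coprime to $\ell$ are met — this is where "sufficiently large $n$" is genuinely used, and it is routine once $n-1 \ge 1$ gives at least $\ell$ choices of representative mod $\ell^n$ in each class mod $\ell^{n-1}$, at least one of which is $\ne k_1$ and coprime to $\ell$ (since at most one of the $\ell$ lifts is divisible by $\ell$, using $\ell$ prime and $n\ge 2$). I would therefore organize the write-up as: (1) upgrade (iii) to all large $n$; (2) expand $\varepsilon_n$ into the basis \eqref{eqn:Vnbasis} and isolate the exponent vector; (3) pick $k'$ and conclude via Lemma~\ref{lem:quo}.
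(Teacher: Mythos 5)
Your proposal follows essentially the same route as the paper's proof: absorb the factors with $\ell\mid a_i$ into $V_{n-1}$, normalize the remaining factors to $1-\zeta_{\ell^n}^{\{a_i\}_n}$ via \eqref{eqn:residueexp}, upgrade hypothesis (iii) to all large $n$ using that $\ord_\ell(a_1\mp a_i)$ is finite, and conclude by comparing exponents through Lemma~\ref{lem:quo}. In substance this is exactly the paper's argument, written out in more detail.

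The one step whose justification does not hold as written is the existence of your comparison index $k'$. You need $k'\equiv k_1\pmod{\ell^{n-1}}$, $k'\ne k_1$, $\ell\nmid k'$ \emph{and} $1\le k'<\ell^n/2$, so that $k'$ actually indexes a basis element of \eqref{eqn:Vnbasis}; but your count only exhibits the $\ell$ lifts of the class modulo $\ell^n$ and ignores the range constraint (the coprimality worry is vacuous, since every lift of a class prime to $\ell$ is prime to $\ell$). Of those $\ell$ lifts only roughly half lie in $[1,\ell^n/2)$: for $\ell\ge 5$ at least $(\ell-1)/2\ge 2$ do, so your argument is fine there; but for $\ell=3$ the plain class of $k_1$ modulo $3^{n-1}$ consists of $k_1$ alone whenever the residue of $k_1$ modulo $3^{n-1}$ exceeds $3^{n-1}/2$, and for $\ell=2$ it always consists of $k_1$ alone --- and the lemma is invoked in the paper for $\ell=2,3$ as well. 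The repair is to compare exponents across the signed class $\{k:\;k\equiv\pm k_1\pmod{\ell^{n-1}}\}$: folding the indices above $\ell^n/2$ back with \eqref{eqn:residueexp}, the expansion in the proofs of Lemmas~\ref{lem:Vnbasis} and~\ref{lem:quo} shows the generators of $V_{n-1}$ have exponent vectors constant on these signed classes, each of which contains at least two basis indices for every $\ell$; since for $n$ large no $a_i$ with $i\ge 2$ (and $\ell\nmid a_i$) is congruent to $\pm a_1$ modulo $\ell^{n-1}$, every index of that class other than $k_1$ carries exponent $0\ne c_1$, and non-membership follows. This is the same subtlety that the paper's own terser proof (and the unsigned statement of Lemma~\ref{lem:quo}) elides, so with this adjustment your plan is sound.
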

\begin{proof}
If $a_j \equiv 0 \pmod{\ell}$ then $(1-\zeta_{\ell^n}^{a_j}) \in V_{n-1}$.
We may therefore suppose $a_j \not\equiv 0 \pmod{\ell}$
for all $j$.
Write 
\[
		\delta_n \; =\; 
		\prod_{1 \le i \le r} \left(1-\zeta_{\ell^n}^{\{a_i\}_n} \right)^{c_i}.
\]
In view of the identity \eqref{eqn:residueexp} it will
be sufficient to show that $\delta_n \notin \langle \pm \zeta_{\ell^n},
V_{n-1}\rangle$ for $n$ sufficiently large. Also, in view
	of Lemma~\ref{lem:quo}, it is sufficient to show
	for sufficiently large $n$ that
	$\{a_1\}_n \not\equiv \{a_j\}_n \pmod{\ell^n}$
	for all $2 \le j \le n$. This is equivalent
	to $a_1 \ne \pm a_j$ for $2 \le j \le n$
	which is hypothesis (iii). This completes
	the proof.
\end{proof}
The following corollary easily follows from Lemma~\ref{lem:quo2}.
\begin{cor}\label{cor:quo3}
Let $a_1,\dotsc,a_r \in \Z_{\ell}$ 
and $c_1,\dotsc,c_r \in \Z$.
Suppose
\begin{enumerate}[(i)]
\item $c_1 \equiv 1 \pmod{2}$.
\item $a_1 \not \equiv 0 \pmod{\ell}$.
\item $a_1 \ne \pm a_2,\pm a_3,\cdots \pm a_r \pmod{\ell^n}$.
\end{enumerate}
Let $\varepsilon_n$ be as in \eqref{eqn:varepsilonn}.
Then, $\varepsilon_n \notin \langle \pm \zeta_{\ell^n}, V_{n-1}, V_n^2\rangle$
for all sufficiently large $n$.
\end{cor}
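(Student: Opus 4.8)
The plan is to deduce Corollary~\ref{cor:quo3} from Lemma~\ref{lem:quo2} by treating the extra generator $V_n^2$ via a reduction modulo squares. First I would observe that $\langle \pm\zeta_{\ell^n}, V_{n-1}, V_n^2\rangle$ is a subgroup of $V_n$ containing $V_n^2$, so membership in it can be detected in the $\F_2$-vector space $W_n := V_n/\langle \pm\zeta_{\ell^n}\rangle V_n^2$. By Lemma~\ref{lem:Vnbasis}, $V_n/\langle \pm\zeta_{\ell^n}\rangle$ is free on the set \eqref{eqn:Vnbasis}, so $W_n$ has the images of $\{1-\zeta_{\ell^n}^k : 1 \le k < \ell^n/2,\ \ell\nmid k\}$ as an $\F_2$-basis. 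The image of $\langle \pm\zeta_{\ell^n}, V_{n-1}, V_n^2\rangle$ in $W_n$ is exactly the image of $\langle \pm\zeta_{\ell^n}, V_{n-1}\rangle$, i.e.\ the $\F_2$-span of the image of $V_{n-1}$.

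Next I would rewrite $\varepsilon_n$ (or rather, using \eqref{eqn:residueexp}, the associated $\delta_n = \prod_i (1-\zeta_{\ell^n}^{\{a_i\}_n})^{c_i}$) in terms of this basis. After discarding those indices $j$ with $a_j \equiv 0 \pmod \ell$ (for which $1-\zeta_{\ell^n}^{a_j} \in V_{n-1}$, hence is trivial in $W_n$), and grouping together indices whose residues $\{a_i\}_n$ coincide, the coefficient of the basis vector $1-\zeta_{\ell^n}^{\{a_1\}_n}$ in $\delta_n$ modulo $2$ is $\sum_{i:\,\{a_i\}_n = \{a_1\}_n} c_i \bmod 2$. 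By hypothesis (iii), for all sufficiently large $n$ we have $a_1 \ne \pm a_j \pmod{\ell^n}$ for every $j \ge 2$ with $a_j \not\equiv 0\pmod\ell$; combined with hypothesis (ii), this forces $\{a_i\}_n = \{a_1\}_n$ to hold only for those $i$ with $a_i = \pm a_1 \pmod{\ell^n}$, and we may further assume (enlarging $n$) that this set of $i$ is stable, so the relevant coefficient stabilizes. I must check it stabilizes to the value $c_1 \bmod 2$: indeed the only surviving $i$ with $a_i \equiv \pm a_1$ is $i=1$ itself, again by (iii), so the coefficient is $c_1 \equiv 1 \pmod 2$ by (i), hence nonzero in $\F_2$.

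Finally I would conclude: by Lemma~\ref{lem:quo}, a product $\prod_{1\le k<\ell^n/2,\ \ell\nmid k}(1-\zeta_{\ell^n}^k)^{c_k}$ lies in $\langle \pm\zeta_{\ell^n}, V_{n-1}\rangle$ if and only if $c_k = c_m$ whenever $k \equiv m \pmod{\ell^{n-1}}$; reducing mod $2$, its image lies in the $\F_2$-span of the image of $V_{n-1}$ in $W_n$ only if the $\F_2$-reductions $c_k \bmod 2$ are constant on residue classes mod $\ell^{n-1}$ among $k$ with $\ell\nmid k$ and $1\le k<\ell^n/2$. Since the basis coefficient at $\{a_1\}_n$ is $1$ while, by hypothesis (iii) and enlarging $n$ so that $\{a_1\}_n$ is not congruent mod $\ell^{n-1}$ to $\{a_j\}_n$ for the finitely many other relevant $j$, there is a residue class mod $\ell^{n-1}$ on which the reduced coefficients are not constant, so $\delta_n \notin \langle \pm\zeta_{\ell^n}, V_{n-1}, V_n^2\rangle$, and hence $\varepsilon_n \notin \langle \pm\zeta_{\ell^n}, V_{n-1}, V_n^2\rangle$ as well.

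The main obstacle I anticipate is bookkeeping rather than conceptual: one must carefully track how the $a_i$ that are $\equiv 0 \pmod \ell$ disappear in $W_n$, how collisions $\{a_i\}_n = \{a_j\}_n$ can only come from $a_i \equiv \pm a_j \pmod{\ell^n}$, and in particular verify that the ``$\mathrm{mod}\ 2$'' collapse of coefficients does not accidentally cancel the coefficient of $1-\zeta_{\ell^n}^{\{a_1\}_n}$ — this is precisely why hypothesis (i) is stated as $c_1 \equiv 1 \pmod 2$ rather than $c_1 \ne 0$, and why hypothesis (iii) is needed in the strong form (not merely $a_1 \ne a_2,\dots$). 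Making ``sufficiently large $n$'' uniform is routine: it suffices to take $n$ large enough that the finitely many congruences $a_1 \equiv \pm a_j$ and $a_i \equiv \pm a_j$ that fail modulo some power of $\ell$ already fail modulo $\ell^{n-1}$.
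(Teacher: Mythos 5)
Your proposal is correct and is essentially the argument the paper intends: Corollary~\ref{cor:quo3} is asserted to follow easily from Lemma~\ref{lem:quo2}, and your passage to $W_n=V_n/\langle\pm\zeta_{\ell^n}\rangle V_n^2$ is exactly that argument run modulo squares, with hypothesis (i) ensuring the surviving coefficient $c_1$ at the basis element $1-\zeta_{\ell^n}^{\{a_1\}_n}$ is odd. Two harmless slips: a factor with $\ell\mid a_j$ is not \emph{trivial} in $W_n$, it merely lies in the image of $\langle\pm\zeta_{\ell^n},V_{n-1}\rangle$ (which is all you need), and the final non-constancy step tacitly requires a second basis index in the residue class of $\{a_1\}_n$, which is automatic for every $\ell$ once Lemma~\ref{lem:quo} is read, as its proof shows, with classes $k\equiv\pm m\pmod{\ell^{n-1}}$.
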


\subsection*{Units and $S$-units from cyclotomic polynomials}
For $m \ge 1$, let $\Phi_m(X) \in \Z[X]$ be the 
\textbf{$m$-th cyclotomic polynomial} defined by
\[
	\Phi_m(X) = \prod_{\substack{1 \le i \le m\\(i,m)=1}} (X -\zeta_m^i).
\]
These satisfy the identity \cite[Chapter 2]{Washington}
\begin{equation}\label{eqn:Phim1}
	X^m-1=\prod_{d \mid m} \Phi_d(X).
\end{equation}
It follows from the M\"{o}bius inversion formula that
\begin{equation}\label{eqn:Phim2}
	\Phi_m(X)=\prod_{d \mid m} (X^d-1)^{\mu(m/d)}
\end{equation}
where $\mu$ denotes the M\"{o}bius function.
\begin{lem}\label{lem:cycpol}
Let $\ell$ be a prime and $n \ge 1$. 
Let $m\ge 1$,
and suppose $\ell^n \nmid m$.
\begin{enumerate}[(a)]
	\item $\Phi_m(\zeta_{\ell^n}) \in V_n \subseteq \OO(\Omega_{n,\ell},S)^\times$,
where $S=\{\upsilon_\ell\}$.
\item If $m \ne \ell^u$ for all $u \ge 0$,
	then $\Phi_m(\zeta_{\ell^n}) \in C_n \subseteq \OO(\Omega_{n,\ell})^\times$.
\end{enumerate}
Moreover,
\[
\upsilon_\ell(\Phi_{\ell^t}(\zeta_{\ell^n})) \; = \; \begin{cases}
\frac{1}{\ell^{n-1} (\ell-1)} & t=0\\
\frac{1}{\ell^{n-t}} & 1 \le t \le n-1.\\
\end{cases}
\]
\end{lem}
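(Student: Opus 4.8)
The plan is to deduce all three assertions from the product expansion~\eqref{eqn:Phim2}, the valuation computation in Lemma~\ref{lem:cycval1}, and the classical evaluation of cyclotomic polynomials at $1$.

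\emph{Part (a).} By~\eqref{eqn:Phim2} we write $\Phi_m(\zeta_{\ell^n})=\prod_{d\mid m}(\zeta_{\ell^n}^d-1)^{\mu(m/d)}$. Since $\ell^n\nmid m$, every divisor $d\mid m$ also satisfies $\ell^n\nmid d$, so $\zeta_{\ell^n}^d\ne 1$ and each factor is nonzero; in particular $\Phi_m(\zeta_{\ell^n})\ne 0$. Letting $k$ denote the residue of $d$ modulo $\ell^n$ with $1\le k<\ell^n$, we have $\zeta_{\ell^n}^d-1=-(1-\zeta_{\ell^n}^k)$, where $1-\zeta_{\ell^n}^k$ is one of the defining generators of $V_n$ and $-1\in\langle\pm\zeta_{\ell^n}\rangle$. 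Hence the product lies in $V_n$, and the inclusion $V_n\subseteq\OO(\Omega_{n,\ell},S)^\times$ with $S=\{\upsilon_\ell\}$ is immediate from the definition of $V_n$.

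\emph{Part (b).} Now suppose $m\ne\ell^u$ for all $u\ge0$. Since $\Phi_m(X)\in\Z[X]$, the element $\Phi_m(\zeta_{\ell^n})$ lies in $\Z[\zeta_{\ell^n}]=\OO(\Omega_{n,\ell})$, so $\ord_{\lambda_n}(\Phi_m(\zeta_{\ell^n}))\ge 0$; it remains to show equality. Because $\lambda_n=(1-\zeta_{\ell^n})\OO(\Omega_{n,\ell})$ we have $\zeta_{\ell^n}\equiv 1\pmod{\lambda_n}$, hence $\Phi_m(\zeta_{\ell^n})\equiv\Phi_m(1)\pmod{\lambda_n}$. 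The standard evaluation (following from~\eqref{eqn:Phim1} by induction, or directly from $\Phi_{p^u}(X)=1+X^{p^{u-1}}+\dots+X^{(p-1)p^{u-1}}$) gives $\Phi_m(1)=p$ when $m$ is a power of a prime $p$ and $\Phi_m(1)=1$ otherwise; since $m\ne\ell^u$, in both cases $\Phi_m(1)$ is a rational integer prime to $\ell$, hence a unit in the residue field $\OO(\Omega_{n,\ell})/\lambda_n$. Therefore $\ord_{\lambda_n}(\Phi_m(\zeta_{\ell^n}))=0$, so $\Phi_m(\zeta_{\ell^n})\in\OO(\Omega_{n,\ell})^\times$, and combined with part (a) it lies in $C_n=V_n\cap\OO(\Omega_{n,\ell})^\times$.

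\emph{The valuation formula.} For $t=0$ we have $\Phi_1(\zeta_{\ell^n})=\zeta_{\ell^n}-1=-(1-\zeta_{\ell^n})$, and Lemma~\ref{lem:cycval1} with $s=1$ gives $\upsilon_\ell(\Phi_1(\zeta_{\ell^n}))=\tfrac{1}{\ell^{n-1}(\ell-1)}$. For $1\le t\le n-1$ I use $\Phi_{\ell^t}(X)=(X^{\ell^t}-1)/(X^{\ell^{t-1}}-1)$, so that $\Phi_{\ell^t}(\zeta_{\ell^n})=(1-\zeta_{\ell^n}^{\ell^t})/(1-\zeta_{\ell^n}^{\ell^{t-1}})$; applying Lemma~\ref{lem:cycval1} to the numerator and denominator (with $\ell$-adic valuations $t$ and $t-1$ respectively, both $<n$) yields $\upsilon_\ell(\Phi_{\ell^t}(\zeta_{\ell^n}))=\tfrac{1}{\ell^{n-1-t}(\ell-1)}-\tfrac{1}{\ell^{n-t}(\ell-1)}=\tfrac{1}{\ell^{n-t}}$. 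There is no real obstacle in the argument; the only point of care is the degenerate case $\ell=2$, $n=1$, where $\Omega_{1,2}=\Q$ and Lemma~\ref{lem:cycval1} was stated for $n\ge 2$, but then the range $0\le t\le n-1$ reduces to $t=0$ and $\Phi_1(\zeta_2)=-2$ generates $\lambda_1=2\Z$, so the formula holds by inspection; one should also ensure the cited values $\Phi_m(1)$ are justified.
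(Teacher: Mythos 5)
Your proposal is correct, and it takes a partially different route from the paper. For part (a) you expand $\Phi_m(\zeta_{\ell^n})$ via the M\"{o}bius product \eqref{eqn:Phim2}, which exhibits it directly as a word in the generators $\pm\zeta_{\ell^n}$, $1-\zeta_{\ell^n}^k$ of $V_n$; the paper instead argues only that $\Phi_m(X)\mid X^m-1$, so that the ideal $\Phi_m(\zeta_{\ell^n})\OO(\Omega_{n,\ell})$ divides $\lambda_n^{\ell^t}$, which gives the $S$-unit property but leaves the $V_n$-membership to be read off from \eqref{eqn:Phim1}--\eqref{eqn:Phim2} (as is done in the subsequent basis lemma) — so your version is, if anything, the more complete one on that point. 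For part (b) the paper stays with ideal arithmetic: writing $m=\ell^t k$, $k>1$, it uses $\Phi_m(X)\mid (X^m-1)/(X^{\ell^t}-1)$ and Lemma~\ref{lem:cycval1} to see the ideal is trivial; you instead reduce modulo $\lambda_n$, using $\zeta_{\ell^n}\equiv 1$ and the classical values $\Phi_m(1)\in\{1,p\}$ with $p\ne\ell$ to get $\ord_{\lambda_n}=0$, and then invoke part (a) to kill the valuations at all other primes. Both arguments are sound; the paper's avoids quoting the evaluation $\Phi_m(1)$, while yours makes the role of the hypothesis $m\ne\ell^u$ especially transparent (it is exactly what keeps $\Phi_m(1)$ prime to $\ell$). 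One small caveat in your write-up: the clause ``so $\Phi_m(\zeta_{\ell^n})\in\OO(\Omega_{n,\ell})^\times$'' should logically come \emph{after} the appeal to part (a), since $\ord_{\lambda_n}=0$ alone only controls the place above $\ell$; as written the two facts are both present, so this is an ordering quibble rather than a gap. The proof of the valuation formula is the same as the paper's, via $\Phi_{\ell^t}(X)=(X^{\ell^t}-1)/(X^{\ell^{t-1}}-1)$ and Lemma~\ref{lem:cycval1}, and your remark about the degenerate case $\ell=2$, $n=1$ is a harmless extra precaution.
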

\begin{proof}
Let $t=\ord_\ell(m)<n$.
Observe that $\Phi_m(X) \mid (X^m-1)$. 
	Hence $\Phi_m(\zeta_{\ell^n}) \cdot  \OO(\Omega_{n,\ell})$ 
	divides $(1-\zeta_{\ell^n}^m) \cdot \OO(\Omega_{n,\ell})$. 
By Lemma~\ref{lem:cycval1} we have
	$(1-\zeta_{\ell^n}^m) \cdot \OO(\Omega_{n,\ell}) \, = \, \lambda_n^{\ell^t}$, giving (a).

For (b), write $m=\ell^t k$ where $k>1$. Then $\Phi_m(X)$ divides
the polynomial $(X^m-1)/(X^{\ell^t}-1)$.
	Therefore $\Phi_m(\zeta) \cdot \OO(\Omega_{n,\ell})$ divides
\[
\frac{(1-\zeta_{\ell^n}^m)}{
	(1-\zeta_{\ell^n}^{\ell^t})} \cdot \OO(\Omega_{n,\ell})=\frac{\lambda_n^{\ell^t}}{\lambda_n^{\ell^t}}=1 \cdot \OO(\Omega_{n,\ell}).
\]
Thus $\Phi_m(\zeta)$ is a unit, giving (b).

The final part of the Lemma follows from Lemma~\ref{lem:cycval1}, and the formulae
\[
\Phi_{\ell^t}(X)=\begin{cases}
X-1 & t=0\\
(X^{\ell^t}-1)/(X^{\ell^{t-1}}-1) & t \ge 1.
\end{cases}
\]
\end{proof}

\begin{lem}
Let $n \ge 2$ if $\ell \ne 2$ and $n \ge 3$ if $\ell=2$.
Then $V_n/\langle \pm \zeta_{\ell^n}\rangle$ is free with basis
\[
	\{\Phi_m(\zeta_{\ell^n}) \; : \; 1 \le m < \ell^n/2, \; \ell \nmid m \}.
\]
\end{lem}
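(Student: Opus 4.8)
The plan is to compare the proposed set with the basis of $V_n/\langle\pm\zeta_{\ell^n}\rangle$ already established in Lemma~\ref{lem:Vnbasis}, namely $\{1-\zeta_{\ell^n}^k : 1 \le k < \ell^n/2,\ \ell\nmid k\}$, and to show that the change-of-coordinates matrix between the two sets is unipotent triangular, hence invertible over $\Z$; this simultaneously proves freeness and that the new set is a basis. The two sets are indexed by the \emph{same} finite set $I = \{1 \le k < \ell^n/2 : \ell\nmid k\}$, so the transition matrix is square.

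First I would check that each $\Phi_m(\zeta_{\ell^n})$ with $m \in I$ lies in $V_n$: since $m < \ell^n$ we have $\ell^n \nmid m$, so this is exactly Lemma~\ref{lem:cycpol}(a). Next, applying the M\"obius-inversion identity \eqref{eqn:Phim2}, $\Phi_m(\zeta_{\ell^n}) = \prod_{d\mid m}(\zeta_{\ell^n}^d - 1)^{\mu(m/d)}$, and absorbing the sign $\zeta^d-1 = -(1-\zeta^d)$ (the total sign is $(-1)^{\sum_{d\mid m}\mu(m/d)} = \pm 1$), one obtains modulo $\langle\pm\zeta_{\ell^n}\rangle$
\[
\Phi_m(\zeta_{\ell^n}) \;\equiv\; \prod_{d\mid m}\bigl(1-\zeta_{\ell^n}^d\bigr)^{\mu(m/d)}.
\]
The key observation is that every divisor $d$ of $m$ satisfies $1 \le d \le m < \ell^n/2$ and $\ell \nmid d$ (because $\ell \nmid m$), so each factor $1-\zeta_{\ell^n}^d$ is already one of the basis elements of Lemma~\ref{lem:Vnbasis}, with index in $I$; in particular no reduction via \eqref{eqn:residueexp} is needed.

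Writing $V_n/\langle\pm\zeta_{\ell^n}\rangle$ additively with basis $e_d := [1-\zeta_{\ell^n}^d]$ ($d\in I$) and setting $f_m := [\Phi_m(\zeta_{\ell^n})]$, the displayed identity reads $f_m = \sum_{d\mid m}\mu(m/d)\,e_d$. Listing $I$ in increasing order, this expresses each $f_m$ as $e_m$ (coefficient $\mu(1)=1$) plus a $\Z$-linear combination of the $e_d$ with $d < m$; so the transition matrix is lower-triangular with $1$'s on the diagonal, of determinant $1$. Hence $\{f_m\}_{m\in I}$ is again a $\Z$-basis of $V_n/\langle\pm\zeta_{\ell^n}\rangle$, which is the assertion. (Alternatively one could invoke \eqref{eqn:Phim1} in the form $1-\zeta_{\ell^n}^k = \prod_{d\mid k}\Phi_d(\zeta_{\ell^n})$ together with the rank count $|I| = \varphi(\ell^n)/2$ from Lemma~\ref{lem:Vnbasis}, but the triangular-matrix argument is self-contained.) The only point needing care — and the main, albeit mild, obstacle — is the verification that the divisors $d\mid m$ never leave the index range $1\le d<\ell^n/2$, $\ell\nmid d$, which keeps the matrix square and triangular; this is immediate since proper divisors are strictly smaller and $\ell\nmid m$.
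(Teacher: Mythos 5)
Your proposal is correct and is essentially the paper's own (very terse) argument made explicit: the paper likewise deduces the lemma from Lemma~\ref{lem:Vnbasis} via the identities \eqref{eqn:Phim1} and \eqref{eqn:Phim2}, and your unitriangular change-of-basis matrix is exactly what makes that deduction work. (The only nitpick is that the sign exponent should count the divisors $d$ with $\mu(m/d)\neq 0$ rather than $\sum_{d\mid m}\mu(m/d)$, but this is immaterial since you work modulo $\langle\pm\zeta_{\ell^n}\rangle$.)
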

\begin{proof}
This follows from Lemma~\ref{lem:Vnbasis} thanks to 
	identities \eqref{eqn:Phim1} and \eqref{eqn:Phim2}.
\end{proof}

\section{The $S$-unit equation over $\Q(\zeta_{\ell^n})^+$}\label{sec:sunit+}
We continue with the notation 
of the previous section. In particular,
let $K$ be a subfield of $\overline{\Q}$ and $S$
be a finite set of primes of $K$.
Let $k$ be a non-zero rational integer. We shall make frequent use
of the
correspondence between elements of $(\PP^1\setminus \{0,k,\infty\})(\OO(K,S))$
and the set of solutions to the $S$-unit equation
\[
	\varepsilon+\delta=k, \qquad \varepsilon,~\delta \in \OO(K,S)^\times,
\]
sending $\varepsilon \in (\PP^1\setminus \{0,k,\infty\})(\OO(K,S))$
to $(\varepsilon,\delta)=(\varepsilon,k-\varepsilon)$.

Now, as before let $\ell$ be a rational prime, $n$ is a positive integer.
If $\ell=2$ suppose $n \ge 2$.
Write $\zeta=\zeta_{\ell^n}$.
We write $\Omega_{n,\ell}^+=\Q(\zeta+1/\zeta)$ for the index $2$
totally real subfield of $\Omega_{n,\ell}$.
We write
\[
	\Omega_{\infty,\ell}^+=\bigcup_{n=1}^\infty \Omega_{n,\ell}^+.
\]
In this section, for suitable $S$, we produce solutions to the $S$-unit
equations over $\Omega_{\infty,\ell}^+$.

As before, $\Phi_m$ denotes the $m$-cyclotomic polynomial.
It is convenient to record the first few $\Phi_m$:
\begin{gather*}
\Phi_1= X - 1, \qquad
\Phi_2= X + 1, \qquad
\Phi_3= X^2 + X + 1,\\
\Phi_4= X^2 + 1, \qquad
\Phi_5= X^4 + X^3 + X^2 + X + 1,\\
\Phi_6= X^2 - X + 1, \qquad
\Phi_7= X^6 + X^5 + X^4 + X^3 + X^2 + X + 1, \\
\Phi_8= X^4 + 1, \qquad
\Phi_9= X^6 + X^3 + 1, \qquad
\Phi_{10}= X^4 - X^3 + X^2 - X + 1. 
\end{gather*}

We shall call a polynomial $F \in \Z[X]$ \textbf{super-cyclotomic} if it is 
of the form $X^m f_1 f_2 \cdots f_k$ where each $f_i(X)$ is a
cyclotomic polynomial. We know, thanks to Lemma~\ref{lem:cycpol}, that if $F$ is super-cyclotomic and $\ell$ is a prime,
then $F(\zeta_{\ell^n}) \in \OO(\Omega_n,\{\upsilon_\ell\})^\times$ for $n$ sufficiently large.
We wrote a short computer program that lists all super-cyclotomic polynomials
of degree at most $20$ and searches for ternary relations of the form $F - G = k H$
with $F$, $G$, $H$ super-cyclotomic, $\gcd(F,G,H)=1$ and $k$ is a positive integer.
Note that any such relation $F - G=kH$ gives points $\varepsilon_n=F(\zeta_{\ell^n})/H(\zeta_{\ell^n}) \in (\PP^1 \setminus \{0,k,\infty\})(\OO(\Omega_n,\{\upsilon_\ell\}))$, for $n$ sufficiently large.
We found the following ternary relations between super-cyclotomic polynomials.
\begin{gather}
\label{eqn:identity1}	\Phi_2(X)^2-\Phi_3(X) \; =\; X;\\
	\Phi_2(X)^2-\Phi_4(X) \; = \; 2X;\\
	\Phi_2(X)^2-\Phi_6(X) \; =\; 3X;\\
	\Phi_2(X)^2-\Phi_1(X)^2 \; =\; 4X;\\
	\Phi_2(X)^4-\Phi_{10}(X) \;=\; 5X \Phi_3(X);\\
	\Phi_2^2(X) \Phi_3(X)-\Phi_1(X)^2 \Phi_6(X) \; =\;  6 X \Phi_4(X);\\
	\Phi_7(X) - \Phi_1(X)^6 \;=\; 7 X \Phi_6(X)^2; \\
	\Phi_2(X)^4-\Phi_1(X)^4 \; = \;  8 X\Phi_4(X);\\
\label{eqn:identity10}	\Phi_2(X)^4\Phi_5(X)-\Phi_1(X)^4\Phi_{10}(X) \; = \; 10 X\Phi_4(X)^3.
\end{gather}
From the identities \eqref{eqn:Phim1} and \eqref{eqn:Phim2} one easily sees that $F(X^k)$
is super-cyclotomic for any super-cyclotomic polynomial $F$ and any positive integer $k$, thus
each of the nine identities above in fact yields an infinite family of identities.
We pose the following open problems:
\begin{itemize}
\item Are there ternary linear relations between super-cyclotomic polynomials
that are outside these nine families?
\item Classify all ternary linear relations between super-cyclotomic
polynomials.
\end{itemize}
\begin{lem}\label{lem:conj}
Let $c : \Omega_\ell \rightarrow \Omega_\ell$ denote complex conjugation.
Let $n \ge 1$ and let $\zeta=\zeta_{\ell^n}$ be an $\ell^{n}$-th root of $1$.
Let $m \ge 1$ and suppose $\ell^n \nmid m$. Then 
\[
\frac{\Phi_m(\zeta)^c}{\Phi_m(\zeta)} \; = \; \begin{cases}
\zeta^{-\varphi(m)} & m \ge 2\\
-\zeta^{-1} & m=1.
\end{cases}
\]
\end{lem}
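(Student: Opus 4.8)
The plan is to reduce the statement to an elementary functional equation for cyclotomic polynomials, combined with the fact that complex conjugation inverts roots of unity. First, since $\zeta$ has absolute value $1$, complex conjugation acts by $\zeta^c=\zeta^{-1}$; and because $\Phi_m$ has rational integer coefficients, the field automorphism $c$ fixes these coefficients, so that $\Phi_m(\zeta)^c=\Phi_m(\zeta^c)=\Phi_m(\zeta^{-1})$. It therefore suffices to relate $\Phi_m(\zeta^{-1})$ to $\Phi_m(\zeta)$.

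Second, I would prove the Laurent polynomial identity
\[
\Phi_m(X^{-1}) \;=\; \sigma_m\, X^{-\varphi(m)}\,\Phi_m(X), \qquad \sigma_m=\begin{cases} -1 & m=1,\\ \phantom{-}1 & m\ge 2,\end{cases}
\]
which falls straight out of the M\"obius formula \eqref{eqn:Phim2}. Indeed $\Phi_m(X^{-1})=\prod_{d\mid m}(X^{-d}-1)^{\mu(m/d)}$, and substituting $X^{-d}-1=-X^{-d}(X^d-1)$ produces an overall sign $(-1)^{\sum_{d\mid m}\mu(m/d)}$ together with a monomial factor $X^{-\sum_{d\mid m}d\,\mu(m/d)}$ multiplying $\Phi_m(X)$. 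The sign equals $\sigma_m$ because $\sum_{e\mid m}\mu(e)$ is $1$ when $m=1$ and $0$ otherwise, and the exponent equals $-\varphi(m)$ because $\sum_{d\mid m}d\,\mu(m/d)=\varphi(m)$ (M\"obius inversion of $\sum_{d\mid m}\varphi(d)=m$). Specializing at $X=\zeta$, which is legitimate since $\zeta\ne 0$, and using $\varphi(1)=1$, gives $\Phi_m(\zeta^{-1})=\zeta^{-\varphi(m)}\Phi_m(\zeta)$ for $m\ge 2$ and $\Phi_1(\zeta^{-1})=-\zeta^{-1}\Phi_1(\zeta)$.

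Third, I would divide through by $\Phi_m(\zeta)$. This is permitted because $\Phi_m(\zeta)\ne 0$: a vanishing $\Phi_m(\zeta)$ would mean that $\zeta$ is a primitive $m$-th root of unity, forcing $m=\ord(\zeta)=\ell^n$, which is excluded by the hypothesis $\ell^n\nmid m$. The resulting identity is precisely the asserted formula. I do not expect a genuine obstacle here; the only mildly delicate point is the sign-and-exponent bookkeeping in the functional equation, and this is dispatched cleanly by the two standard M\"obius identities used above.
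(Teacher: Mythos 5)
Your proof is correct. Both you and the paper reduce the lemma to the functional equation $X^{\varphi(m)}\Phi_m(X^{-1})=\pm\Phi_m(X)$, note $\Phi_m(\zeta)^c=\Phi_m(\zeta^{-1})$ since $c$ fixes the integer coefficients and inverts $\zeta$, and then divide (your justification that $\Phi_m(\zeta)\ne 0$ forces $m=\ell^n$, excluded by hypothesis, is the right one). The difference lies in how the functional equation is established. The paper treats $m=1$ and $m=2$ by direct computation and, for $m\ge 3$, argues that the roots of $\Phi_m$ come in distinct pairs $\eta,\eta^{-1}$, so the trailing coefficient is $1$ and $X^{\varphi(m)}\Phi_m(X^{-1})$ is monic with the same roots as $\Phi_m$, hence equal to it; the case split is forced because the root-pairing argument needs $\eta\ne\eta^{-1}$. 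You instead derive the identity uniformly for all $m\ge 1$ from the M\"obius product \eqref{eqn:Phim2}, with the sign $\sigma_m$ coming out of $\sum_{e\mid m}\mu(e)$ and the exponent from $\sum_{d\mid m}d\,\mu(m/d)=\varphi(m)$; this handles $m=1,2$ and the sign in one stroke, at the cost of invoking two standard M\"obius identities rather than the bare root-pairing observation. One small point worth making explicit: the identity you specialize is between Laurent polynomials, so evaluation at $\zeta\ne 0$ is indeed harmless (alternatively, every factor $\zeta^d-1$ with $d\mid m$ is nonzero because $\ell^n\nmid m$), and your parenthetical remark covers this adequately.
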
 
\begin{proof}
Note that $\zeta^c=\zeta^{-1}$. So 
\[
\frac{\Phi_1(\zeta)^c}{\Phi_1(\zeta)}=\frac{\zeta^{-1}-1}{\zeta-1}=-\zeta^{-1}, \qquad
\frac{\Phi_2(\zeta)^c}{\Phi_2(\zeta)}=\frac{\zeta^{-1}+1}{\zeta+1}=\zeta^{-1}. 
\]
Let $m \ge 3$. The polynomial $\Phi_m$ is monic of degree $\varphi(m)$,
and its roots are the primitive $m$-th roots of $1$ which come in distinct
pairs $\eta$, $\eta^{-1}$. Thus the trailing coefficient is $1$.
It follows that $X^{\varphi(m)} \Phi_m(X^{-1})$ is monic and has the same roots as $\Phi_m$,
therefore 
\[
\Phi_m(X)=X^{\varphi(m)} \Phi_m(X^{-1}).
\]
Hence
\[
\frac{\Phi_m(\zeta)^c}{\Phi_m(\zeta)}
=
\frac{\Phi_m(\zeta^{-1})}{\Phi_m(\zeta)}
=\zeta^{-\varphi(m)}.
\]
\end{proof}
\begin{lem}\label{lem:cycsunit}
Let $S=\{\upsilon_\ell\}$. 
Let 
\[
k \in \{1,2,3,4,5,6,7,8,10\}.
\]
	Then $(\PP^1\setminus \{0,k,\infty\})(\OO(\Omega_{\infty,\ell}^+,S))$ is infinite.
\end{lem}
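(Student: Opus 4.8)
The goal is to produce, for each $k \in \{1,2,3,4,5,6,7,8,10\}$, infinitely many points of $(\PP^1\setminus\{0,k,\infty\})(\OO(\Omega_{\infty,\ell}^+,S))$ with $S=\{\upsilon_\ell\}$. I would exploit the nine super-cyclotomic identities \eqref{eqn:identity1}--\eqref{eqn:identity10}, which furnish exactly one relation $F-G=kH$ for each such $k$. For a fixed identity $F(X)-G(X)=kH(X)$ and each $n$ large enough, Lemma~\ref{lem:cycpol} gives that $F(\zeta_{\ell^n})$, $G(\zeta_{\ell^n})$, $H(\zeta_{\ell^n})$ all lie in $\OO(\Omega_{n,\ell},S)^\times$, so setting
\[
	\varepsilon_n \; = \; \frac{F(\zeta_{\ell^n})}{H(\zeta_{\ell^n})}, \qquad \delta_n \; = \; \frac{G(\zeta_{\ell^n})}{H(\zeta_{\ell^n})}
\]
yields $\varepsilon_n + \delta_n = k$ with $\varepsilon_n,\delta_n \in \OO(\Omega_{n,\ell},S)^\times$. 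This is a point of $(\PP^1\setminus\{0,k,\infty\})(\OO(\Omega_{\infty,\ell},S))$; the two issues to resolve are (a) that the point actually lies in the totally real subfield $\Omega_{\infty,\ell}^+$, and (b) that infinitely many of the $\varepsilon_n$ are genuinely distinct.

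\textbf{Descent to the totally real field.} For (a) I would apply Lemma~\ref{lem:conj}. Complex conjugation $c$ sends $\Phi_m(\zeta)$ to $\zeta^{-\varphi(m)}\Phi_m(\zeta)$ for $m\ge 2$ and to $-\zeta^{-1}\Phi_1(\zeta)$ for $m=1$; hence for any super-cyclotomic $F=X^a f_1\cdots f_j$ one has $F(\zeta)^c/F(\zeta) = \pm\zeta^{-w(F)}$ for an explicit integer weight $w(F)$ (the weight of $X$ being $1$, of $\Phi_1$ being $1$, of $\Phi_m$ being $\varphi(m)$). Since the relation $F-G=kH$ is an identity of polynomials with $F$, $G$, $H$ all dividing a common power of $X^N-1$, comparing it degree-by-degree forces the weights to match up so that $\varepsilon_n^c = \varepsilon_n$ and $\delta_n^c=\delta_n$ — equivalently $w(F)=w(H)$ and $w(G)=w(H)$, which one checks holds in each of the nine cases (e.g. in \eqref{eqn:identity1}, $w(\Phi_2^2)=2$, $w(X\,)=1$ and $w(\Phi_3)=2$, so after multiplying through the homogeneity is visible; the sign ambiguities coming from $\Phi_1$ occur in even powers $\Phi_1(X)^2$, $\Phi_1(X)^4$, $\Phi_1(X)^6$ and hence cause no trouble). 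Thus $\varepsilon_n\in \Omega_{n,\ell}^+$, and the point lies in $(\PP^1\setminus\{0,k,\infty\})(\OO(\Omega_{\infty,\ell}^+,S))$ as required. I would verify the weight bookkeeping case by case; this is routine but must be done, since it is the only place the real subfield enters.

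\textbf{Infinitude.} For (b) I would show that the sequence $\{\varepsilon_n\}$ takes infinitely many values. Each identity has $F$ of the shape $X^a\Phi_2(X)^{e}(\cdots)$ with a nontrivial cyclotomic factor, so $\varepsilon_n = F(\zeta_{\ell^n})/H(\zeta_{\ell^n})$ is (up to a root of unity, which over $\Omega^+$ is just $\pm1$) a product $\prod_i(1-\zeta_{\ell^n}^{a_i})^{c_i}$ with the $a_i$ coming from the roots of $F$ and $H$ and not all $c_i$ zero; crucially at least one exponent, namely the one attached to a primitive $\ell^n$-th root factor from $\Phi_2$ (i.e. $a_i$ with $a_i\equiv$ a unit mod $\ell$), has nonzero coefficient and is not matched by any other $a_j$. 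Then Lemma~\ref{lem:quo2} (or Corollary~\ref{cor:quo3} when an odd-exponent statement is wanted) shows $\varepsilon_n\notin\langle\pm\zeta_{\ell^n},V_{n-1}\rangle$ for all sufficiently large $n$, so in particular $\varepsilon_n\notin\Omega_{n-1,\ell}$ for large $n$; hence the field $\Q(\varepsilon_n)$ strictly grows along a subsequence of $n$, forcing infinitely many distinct $\varepsilon_n$. I expect the main obstacle to be precisely this last step: one must read off the exponent vector $(a_i,c_i)$ from each super-cyclotomic identity and check that hypotheses (i)--(iii) of Lemma~\ref{lem:quo2} are met — in particular that the distinguished index $a_1$ is a unit mod $\ell$ and is distinct mod $\ell^n$ from $\pm a_j$ for all other $j$ — which requires knowing that $\zeta_{\ell^n}$ is not a root of the other cyclotomic factors appearing, i.e. $\ell^n\nmid m$ for those $m$, true for $n$ large. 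Once this is checked uniformly across the nine identities, the lemma is proved.
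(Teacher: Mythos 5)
Your proposal reconstructs the paper's own proof almost exactly: evaluate each identity $F-G=kH$ from \eqref{eqn:identity1}--\eqref{eqn:identity10} at $\zeta_{\ell^n}$, use Lemma~\ref{lem:cycpol} for the $S$-unit property, Lemma~\ref{lem:conj} for descent to $\Omega_{\infty,\ell}^+$, and Lemma~\ref{lem:quo}/Lemma~\ref{lem:quo2} to show $\varepsilon_n\notin\langle\pm\zeta_{\ell^n},V_{n-1}\rangle$ (the paper carries this out for $k=10$ and notes the other cases are similar). Three small repairs are needed, all local. First, a sign: from $F-G=kH$ you must take $\delta_n=-G(\zeta_{\ell^n})/H(\zeta_{\ell^n})$, not $+G/H$, to get $\varepsilon_n+\delta_n=k$. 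Second, your conjugation bookkeeping is off: since $(\zeta^a)^c/\zeta^a=\zeta^{-2a}$, a factor $X^a$ contributes $2a$, not $a$; the correct statement is $F(\zeta)^c/F(\zeta)=\pm\zeta^{-(a+\deg F)}$ for $F=X^af_1\cdots f_j$. With your weights the criterion $w(F)=w(H)$ fails already in \eqref{eqn:identity1} ($2$ versus $1$), whereas with the corrected weight it holds in all nine cases (e.g.\ for \eqref{eqn:identity10}: $0+8=1+7$), and the sign ambiguities are harmless since $\Phi_1$ only occurs to even powers, as you observe; your promised case-by-case check via Lemma~\ref{lem:conj} would of course catch this. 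Third, the inference ``$\varepsilon_n\notin\langle\pm\zeta_{\ell^n},V_{n-1}\rangle$, hence $\varepsilon_n\notin\Omega_{n-1,\ell}$'' is not justified, since an $S$-unit of $\Omega_{n-1,\ell}$ need not lie in $\langle\pm\zeta_{\ell^n},V_{n-1}\rangle$; but you do not need it: because $\varepsilon_m\in V_m\subseteq V_{n-1}$ for all $m<n$ (Lemma~\ref{lem:quo}), the non-membership you establish already forces $\varepsilon_n\ne\varepsilon_m$, which is exactly how the paper concludes.
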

\begin{proof}
The proof makes use of identities \eqref{eqn:identity1}--\eqref{eqn:identity10}.
We prove the lemma for $k=10$ using 
the identity \eqref{eqn:identity10}; the other cases are similar.
Let $n \ge 3$ and let $\zeta=\zeta_{\ell^n}$.
Let
\[
\varepsilon=\frac{\Phi_2(\zeta)^4 \Phi_5(\zeta)}{\zeta \Phi_4(\zeta)^3},
\qquad
\delta=\frac{-\Phi_1(\zeta)^4\Phi_{10}(\zeta)}{\zeta \Phi_4(\zeta)^3}.
\]
By the identity, $\varepsilon+\delta=10$. By Lemma~\ref{lem:cycpol}
we know that $\varepsilon$, $\delta$ are $S$-units.
	A priori, $\varepsilon$, $\delta$ belong to $\Omega_{\infty,\ell}$.
However, an easy application of Lemma~\ref{lem:conj}
shows that $\varepsilon^c=\varepsilon$ and $\delta^c=\delta$,
	so $\varepsilon$, $\delta \in \Omega_{\infty,\ell}^+$.
	It follows that $\varepsilon$ is an $\OO(\Omega_{\infty,\ell}^+,S)$-point 
on $\PP^1 \setminus \{0,10,\infty\}$.
This point depends on $\zeta=\zeta_{\ell^n}$.
Let us make sure that we really obtain infinitely
many such points as we vary $n$. Write
\[
	\varepsilon_n \; =\; \frac{\Phi_2(\zeta_{\ell^n})^4 \Phi_5(\zeta_{\ell^n})}{\zeta_{\ell^n} \Phi_4(\zeta_{\ell^n})^3} \; =\;
	\frac{(1-\zeta_{\ell^n}^2)^7(1-\zeta_{\ell^n}^5)}{\zeta_{\ell^n} (1-\zeta_{\ell^n})^5 (1-\zeta_{\ell^n}^4)^3}
	\in V_n.
\] 
To show that we obtain infinitely many distinct $\varepsilon_n$
	it is enough to show that $\varepsilon_n \notin V_{n-1}$
	for $n$ sufficiently large. This follows by an easy application
	of Lemma~\ref{lem:quo}; to illustrate this let $\ell=5$
	and suppose $\varepsilon_n \in V_{n-1}$. 
	Note that $1-\zeta_{5^n}^5 \in V_{n-1}$. 
	It follows that
	\[
		(1-\zeta_{5^n})^{-5} (1-\zeta_{5^n}^2)^7 (1-\zeta_{5^n}^4)^{-3}
		\; \in \;
		\langle \pm \zeta_{\ell^n} , V_{n-1} \rangle.
	\]
	Now in the product on the left the exponent of $1-\zeta_{5^n}$ is
	$-5$ whereas the exponent of $1-\zeta_{5^n}^{1+5^{n-1}}$ is $0$,
	contradicting Lemma~\ref{lem:quo}. The proof is 
	similar for $\ell=2$, and for $\ell \ne 2$, $5$.
	It follows that 
	we have infinitely many $\OO(\Omega_{\infty,\ell}^+,S)$-points on $\PP^1 \setminus \{0,10,\infty\}$.
\end{proof}

\subsection*{Proof of Theorem~\ref{thm:Siegel2} for $\ell=2$ and $3$}
For $\ell=2$, $3$, we have $\Omega_{\infty,\ell}^+=\Q_{\infty,\ell}$.
Indeed, if $\ell=2$ then $\Q_{n,2}=\Omega_{n+2,2}^+$  and 
if $\ell=3$ then $\Q_{n,3}=\Omega_{n+1,3}^+$. 
Therefore Theorem~\ref{thm:Siegel2} with $\ell=2$ and $3$
follows immediately from Lemma~\ref{lem:cycsunit} for $k \in
\{1,2,3,4,5,6,7,8,10\}$.  

Also, if $\baseprime = 2$, then the infinitely many solutions $\eps + \delta =
6$ yields infinitely many solutions for  $2\eps + 2\delta = 12$ and $4\eps +
4\delta = 24$.  And if $\baseprime = 3$, then the infinitely many solutions
$\eps + \delta = 4$ yields infinitely many solutions $3\eps + 3\delta = 12$,
and similarly infinitely many solutions $\eps + \delta = 8$ yields infinitely
many solutions $3\eps + 3\delta = 24$. This proves Theorem~\ref{thm:Siegel2}
for $\ell=2$, $3$ and $k \in \{12, 24\}$. \qed

\subsection*{Proof of Theorem~\ref{thm:Siegel} for $\ell=2$}
Theorem~\ref{thm:Siegel} for $\ell=2$ is simply a special
case of Theorem~\ref{thm:Siegel2}.\qed

\section{The unit equation over $\Q(\zeta_{\ell^n})^+$}\label{sec:unit+}
For roots of unity $\alpha$, $\beta$, we let
\begin{align*}
	E(\alpha,\beta)& =\frac{\alpha^2+\alpha^{-2}}{
		\left( \alpha \beta^{-1}+\alpha^{-1} \beta\right) (\alpha \beta+\alpha^{-1} \beta^{-1})
		}=
	\frac{\Phi_8(\alpha)}{\Phi_4(\alpha \beta) \Phi_4(\alpha/\beta)},\\
	F(\alpha,\beta) & =\frac{\beta^2+\beta^{-2}}{
		\left( \alpha \beta^{-1}+\alpha^{-1} \beta\right) (\alpha \beta+\alpha^{-1} \beta^{-1})
		}=
	\frac{\Phi_8(\beta)}{\Phi_4(\alpha \beta) \Phi_4(\beta/\alpha)}.
\end{align*}
We easily check that
\begin{equation}\label{eqn:EF} 
	E(\alpha,\beta)+F(\alpha,\beta)=1. 
\end{equation}
\begin{lem}\label{lem:+Siegel}
Suppose $\ell$ is odd and $n \ge 1$. 
	Let $\zeta=\zeta_{\ell^n}$.
	Let $i$, $j$ be integers satisfying
$i$, $j$, $i+j$, $i-j \not \equiv 0 \pmod{\ell^n}$.
Then
	$E(\zeta^i,\zeta^j)$, $F(\zeta^i,\zeta^j) \in \OO(\Omega_{n,\ell}^+)^\times$,
	and satisfy the unit equation 
	\begin{equation}\label{eqn:unitKr}
		\varepsilon+\delta=1, \qquad \varepsilon,~\delta \in \OO(\Omega_{n,\ell}^+)^\times.
	\end{equation}
Moreover,
\begin{equation}\label{eqn:valij}
\upsilon_\ell(E(\zeta^i,\zeta^j)-F(\zeta^i,\zeta^j))=\frac{\ell^{\ord_\ell(i+j)}+\ell^{\ord_\ell(i-j)}}{\ell^{n-1}(\ell-1)}
\end{equation}
\end{lem}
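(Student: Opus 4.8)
The plan is to rewrite $E$ and $F$ at $\alpha=\zeta^i$, $\beta=\zeta^j$ in terms of values of cyclotomic polynomials and then read everything off Lemma~\ref{lem:cycval1}. Substituting into the definitions gives
\[
	E(\zeta^i,\zeta^j)=\frac{\Phi_8(\zeta^i)}{\Phi_4(\zeta^{i+j})\,\Phi_4(\zeta^{i-j})},
	\qquad F(\zeta^i,\zeta^j)=E(\zeta^j,\zeta^i),
\]
the second equality being immediate from the definitions of $E$ and $F$. Since $\ell$ is odd, none of $\zeta^{2i}$, $\zeta^{2j}$, $\zeta^{2(i\pm j)}$ equals $-1$, and the hypotheses $i,j,i\pm j\not\equiv 0\pmod{\ell^n}$ say none of them equals $1$; in particular $\Phi_4(\zeta^{i\pm j})=\zeta^{2(i\pm j)}+1\neq 0$, so the expressions make sense, and identity~\eqref{eqn:EF} gives $E(\zeta^i,\zeta^j)+F(\zeta^i,\zeta^j)=1$ for free.

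First I would prove that $\Phi_8(\zeta^i)$, $\Phi_4(\zeta^{i+j})$ and $\Phi_4(\zeta^{i-j})$ each generate the unit ideal of $\OO(\Omega_{n,\ell})$. The point is the polynomial identities $\Phi_8(X)(X^4-1)=X^8-1$ and $\Phi_4(X)(X^2-1)=X^4-1$: because $\ell$ is odd, $\ord_\ell(2^a m)=\ord_\ell(m)$, so the hypotheses are exactly the statement that each $s\in\{2i,4i,8i,2(i\pm j),4(i\pm j)\}$ has $\ord_\ell(s)<n$, whence Lemma~\ref{lem:cycval1} gives $(1-\zeta^s)\,\OO(\Omega_{n,\ell})=\lambda_n^{\ell^{\ord_\ell(s)}}$. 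Comparing ideals on the two sides of, e.g., $\Phi_8(\zeta^i)\cdot(\zeta^{4i}-1)=\zeta^{8i}-1$ forces $\Phi_8(\zeta^i)\,\OO(\Omega_{n,\ell})=\OO(\Omega_{n,\ell})$, and the two $\Phi_4$ factors are handled the same way. Hence $E(\zeta^i,\zeta^j)\in\OO(\Omega_{n,\ell})^\times$, and since $F(\zeta^i,\zeta^j)=E(\zeta^j,\zeta^i)$ and the hypotheses are symmetric in $i\leftrightarrow j$, also $F(\zeta^i,\zeta^j)\in\OO(\Omega_{n,\ell})^\times$. Next I would note that the original fractional forms of $E$ and $F$ are manifestly invariant under complex conjugation $\zeta\mapsto\zeta^{-1}$ — each factor has the shape $\zeta^a+\zeta^{-a}$ — so they lie in $\Omega_{n,\ell}^+$; a unit of $\OO(\Omega_{n,\ell})$ lying in $\Omega_{n,\ell}^+$ is a unit of $\OO(\Omega_{n,\ell}^+)$, which gives \eqref{eqn:unitKr}.

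For the valuation formula \eqref{eqn:valij}, the crux is the degree-zero identity
\[
	(\zeta^{2i}+\zeta^{-2i})-(\zeta^{2j}+\zeta^{-2j})=(\zeta^{i+j}-\zeta^{-(i+j)})(\zeta^{i-j}-\zeta^{-(i-j)}),
\]
obtained by expanding the product on the right. Dividing by the common denominator of $E(\zeta^i,\zeta^j)$ and $F(\zeta^i,\zeta^j)$ gives
\[
	E(\zeta^i,\zeta^j)-F(\zeta^i,\zeta^j)=\frac{(\zeta^{i+j}-\zeta^{-(i+j)})(\zeta^{i-j}-\zeta^{-(i-j)})}{(\zeta^{i+j}+\zeta^{-(i+j)})(\zeta^{i-j}+\zeta^{-(i-j)})}.
\]
The denominator factors equal $\zeta^{-(i\pm j)}\Phi_4(\zeta^{i\pm j})$, hence are units by the previous step and contribute $0$ to $\upsilon_\ell$; each numerator factor equals $\zeta^{-(i\pm j)}(\zeta^{2(i\pm j)}-1)$, and Lemma~\ref{lem:cycval1} gives $\upsilon_\ell(\zeta^{2(i\pm j)}-1)=\ell^{\ord_\ell(i\pm j)}/\bigl(\ell^{n-1}(\ell-1)\bigr)$ — again using $\ell$ odd to strip the $2$ from the exponent. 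Adding the contributions from $i+j$ and from $i-j$ yields exactly \eqref{eqn:valij}.

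Everything here is elementary. The two places that need attention are the repeated use of $\ell$ being odd to discard powers of $2$ in exponents — which is what makes the non-vanishing hypotheses coincide with the hypotheses of Lemma~\ref{lem:cycval1} — and spotting the factorisation of the numerator of $E-F$; I would flag the latter as the only genuinely substantive step, more an idea than an obstacle.
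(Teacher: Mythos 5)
Your proposal is correct and follows essentially the same route as the paper: conjugation-invariance for membership in $\Omega_{n,\ell}^+$, unit-ness of the $\Phi_4$ and $\Phi_8$ values, and the same factorisation of $E-F$ as $(\zeta^{2(i+j)}-1)(\zeta^{2(i-j)}-1)$ over a unit denominator, with Lemma~\ref{lem:cycval1} giving \eqref{eqn:valij}. The only cosmetic difference is that you reprove the needed case of Lemma~\ref{lem:cycpol}(b) directly from Lemma~\ref{lem:cycval1} via $\Phi_8(X)(X^4-1)=X^8-1$ and $\Phi_4(X)(X^2-1)=X^4-1$, whereas the paper simply cites that lemma.
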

\begin{proof}
	It is clear that $E(\zeta^i,\zeta^j)$, $F(\zeta^i,\zeta^j)$
	are fixed  by complex conjugation $\zeta \mapsto \zeta^{-1}$
	and so belong to $\Omega_{n,\ell}^+$.
By Lemma~\ref{lem:cycpol}, $E(\zeta^i,\zeta^j)$
and $F(\zeta^i,\zeta^j)$ are units.
It remains to check~\eqref{eqn:valij}.
We observe
\[
E(\zeta^i,\zeta^j)-F(\zeta^i,\zeta^j)=\frac{(\zeta^{i-j}-\zeta^{j-i})(\zeta^{i+j} -\zeta^{-i-j})}{(\zeta^{i-j}+\zeta^{j-i})(\zeta^{i+j} +\zeta^{-i-j})}=
	\frac{(\zeta^{2(i-j)}-1) (\zeta^{2(i+j)}-1)}
	{\Phi_4(\zeta^{i-j}) \Phi_4(\zeta^{i+j})}.
\]
	The denominator is a unit by Lemma~\ref{lem:cycpol}.
	Now \eqref{eqn:valij} follows from Lemma~\ref{lem:cycval1}.
\end{proof}
\begin{lem}\label{lem:puninf}
Let $\ell$ be an odd prime.
Then $(\PP^1\setminus \{0,1,\infty\})(\OO(\Omega_{\infty,\ell}^+))$ is infinite.
\end{lem}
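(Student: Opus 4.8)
The plan is to use the infinite family of unit equation solutions furnished by Lemma~\ref{lem:+Siegel}, and to argue — just as in the proof of Lemma~\ref{lem:cycsunit} — that among these solutions infinitely many are genuinely distinct as $n$ varies. First I would fix a convenient choice of exponents, say $i=1$ and $j=2$ (this works since $\ell$ is odd, so $i$, $j$, $i+j=3$, $i-j=-1$ are all coprime to $\ell$ for $\ell \ne 3$; for $\ell=3$ one instead takes $i=1$, $j=7$ or some other pair avoiding the congruence conditions modulo $3^n$). Then Lemma~\ref{lem:+Siegel} gives, for each sufficiently large $n$, a point
\[
	\varepsilon_n = E(\zeta_{\ell^n}^i, \zeta_{\ell^n}^j) \in (\PP^1 \setminus \{0,1,\infty\})(\OO(\Omega_{n,\ell}^+)) \subseteq (\PP^1 \setminus \{0,1,\infty\})(\OO(\Omega_{\infty,\ell}^+)),
\]
and it remains to show the $\varepsilon_n$ are not eventually constant.

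The key step is the non-triviality argument. Writing $E(\zeta^i,\zeta^j) = \Phi_8(\zeta^i)/(\Phi_4(\zeta^{i+j})\Phi_4(\zeta^{i-j}))$ and expanding each cyclotomic polynomial value as a product of terms $1-\zeta_{\ell^n}^a$ via Lemma~\ref{lem:cycpol} (using $\Phi_8(X) = X^4+1 = (X^8-1)/(X^4-1)$ and $\Phi_4(X) = (X^4-1)/(X^2-1)$), one obtains an expression of $\varepsilon_n$ of the shape $\prod (1-\zeta_{\ell^n}^{a_k})^{c_k}$ lying in $V_n$. To see $\varepsilon_n \notin V_{n-1}$ for large $n$ — which suffices, since each element of $V_{n-1}$ arises for only finitely many indices — I would invoke Lemma~\ref{lem:quo} (equivalently Lemma~\ref{lem:quo2}): it is enough to exhibit one exponent $a$ with $\ell \nmid a$ such that the coefficient of $1-\zeta_{\ell^n}^a$ differs from the coefficient of $1-\zeta_{\ell^n}^{a'}$ for some $a' \equiv a \pmod{\ell^{n-1}}$ with $a' \not\equiv \pm a \pmod{\ell^n}$. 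Alternatively, and more cleanly, one can read off $\upsilon_\ell(\varepsilon_n - (1-\varepsilon_n)) = \upsilon_\ell(E - F)$ from the explicit formula \eqref{eqn:valij}: with $i-j$ and $i+j$ both prime to $\ell$ this valuation equals $2/(\ell^{n-1}(\ell-1))$, which takes infinitely many distinct values as $n \to \infty$, so the $\varepsilon_n$ are pairwise distinct.

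The main obstacle is essentially bookkeeping rather than conceptual: one must choose the auxiliary exponents $i,j$ so that the congruence hypotheses of Lemma~\ref{lem:+Siegel} hold for all large $n$ (a single uniform choice works for each fixed $\ell$, with $\ell=3$ requiring mild care), and then verify that the resulting element of $V_n$ is not killed modulo $V_{n-1}$ — but the valuation computation \eqref{eqn:valij} sidesteps the latter entirely, since distinct valuations immediately force distinct points. Thus the proof reduces to: pick $i$, $j$; apply Lemma~\ref{lem:+Siegel} to get $S$-units (indeed units) satisfying $\varepsilon_n + \delta_n = 1$ in $\Omega_{n,\ell}^+$; and observe via \eqref{eqn:valij} that $\upsilon_\ell(\varepsilon_n - \delta_n)$ is strictly decreasing in $n$, hence the points are infinitely many.
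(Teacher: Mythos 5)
Your proposal is correct and follows essentially the same route as the paper: fix a pair of exponents (the paper uses $i=2$, $j=1$), invoke Lemma~\ref{lem:+Siegel} to get unit-equation solutions in $\OO(\Omega_{n,\ell}^+)^\times$, and distinguish the points via the valuation formula \eqref{eqn:valij}, whose value strictly decreases with $n$. Your extra caution for $\ell=3$ is unnecessary, since the hypotheses of Lemma~\ref{lem:+Siegel} only require $i$, $j$, $i\pm j\not\equiv 0\pmod{\ell^n}$ and the formula \eqref{eqn:valij} handles $\ell\mid(i+j)$ anyway, giving $2/3^{n-1}$ exactly as in the paper.
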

\begin{proof}
We deduce this from Lemma~\ref{lem:+Siegel}.
Let us take for example $i=2$ and $j=1$.
Let $n\ge 2$ and let
	\[
		\varepsilon_n=E(\zeta_{\ell^n}^2,\zeta_{\ell^n}),
		\qquad
		\delta_n=F(\zeta_{\ell^n}^2,\zeta_{\ell^n}).
	\]
By Lemma~\ref{lem:+Siegel}, $\varepsilon_n$, $\delta_n \in \OO(\Omega_{\infty,\ell}^+)^\times$
and satisfy $\varepsilon_n+\delta_n=1$. Thus $\varepsilon_n
\in (\PP^1\setminus \{0,1,\infty\})(\OO(\Omega_{\infty,\ell}^+))$. Moreover,
\[
	\upsilon_\ell(2 \varepsilon_n-1)=
	\upsilon_\ell(\varepsilon_n-\delta_n)
	=
	\begin{cases}
		\frac{2}{\ell^{n-1} (\ell-1)} & \ell >3\\
		\frac{2}{3^{n-1}} & \ell=3,
	\end{cases}
\]
	by \eqref{eqn:valij}.
Thus $\varepsilon_n \ne \varepsilon_m$ whenever $n \ne m$.
Hence $(\PP^1\setminus \{0,1,\infty\})(\OO(\Omega_{\infty,\ell}^+))$ is infinite.
\end{proof}

\noindent \textbf{Remark.}
Lemma~\ref{lem:puninf} applies only for $\ell$ odd;
for $\ell=2$ it is easy to show that the statement is false.
Indeed,
and let $\eta_n$ be the prime ideal of $\OO(\Omega_{n,2}^+)$ above $2$.
Then $\OO(\Omega_{n,2}^+)/\eta_n \cong \F_2$, and a solution to 
$\varepsilon+\delta=1$ with $\varepsilon$, $\delta \in
\OO(\Omega_{n,2}^\plus)^\times$ reduced modulo $\eta_n$ gives $1+1
\equiv 1 \pmod{2}$ which is impossible.

\subsection*{Proof of Theorem~\ref{thm:Siegel} for $\ell=3$}
	We recall that $\Q_{\infty,3}=\Omega_{\infty,3}^+$.
	Therefore Theorem~\ref{thm:Siegel} for $\ell=3$
	follows immediately from Lemma~\ref{lem:puninf}. \qed

\section{The $S$-unit equation over $\Q_{\infty,5}$} \label{sec:Q5}
The purpose of the is section is to prove Theorems~\ref{thm:Siegel}
and~\ref{thm:Siegel2} for $\ell=5$. These in fact follow immediately
from the following lemma.
\begin{lem}\label{lem:Q5}
	Let $\upsilon_5$ be the unique prime of $\Q_{\infty,5}$ above $5$,
			and write $S=\{\upsilon_5\}$.
			Then
	\begin{enumerate}[(i)]
		\item $(\PP^1\setminus \{0,k,\infty\})(\OO(\Q_{\infty,5},S))$
	is infinite for $k=1$, $4$;
\item $(\PP^1\setminus \{0,2,\infty\})(\OO(\Q_{\infty,5}))$
	is infinite.
	\end{enumerate}
\end{lem}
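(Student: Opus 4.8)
The plan is to reduce to working inside the cyclotomic field $\Omega_{\infty,5}=\Q(\zeta_{5^\infty})$ and then to descend to $\Q_{\infty,5}$ by exhibiting, for each target constant $k$, an explicit ternary relation among super-cyclotomic polynomials that produces solutions which are fixed by the index-$2$ subgroup $\Gal(\Omega_{n,5}/\Q_{n,5})$. Recall $[\Omega_{n,5}:\Q_{n,5}]=2$, the nontrivial automorphism being $\zeta_{5^n}\mapsto\zeta_{5^n}^{-1}$ composed (if necessary) with the order-$2$ element of $(\Z/5)^\times$; so the fixed field of $\Omega_{\infty,5}$ under the quadratic subgroup of $\Gal(\Omega_{\infty,5}/\Q)$ is exactly $\Q_{\infty,5}$, not merely $\Omega_{\infty,5}^+$. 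Thus the identities from Section~\ref{sec:sunit+}, which only give points fixed by complex conjugation (hence in $\Omega_{\infty,5}^+$), are not by themselves enough: I will need relations whose associated $S$-units are fixed by the full quadratic subgroup. For part~(ii) with $k=2$ and $S=\emptyset$ one wants moreover genuine units, so the super-cyclotomic building blocks must avoid the factor $\Phi_{5^t}$ entirely (by Lemma~\ref{lem:cycpol}(b)).

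First I would identify the correct quadratic character $\sigma$ generating $\Gal(\Omega_{\infty,5}/\Q_{\infty,5})$: since $(\Z/5)^\times$ is cyclic of order $4$ generated by $2$, the unique order-$2$ subgroup is $\{1,-1\}$, so in fact $\Gal(\Omega_{\infty,5}/\Q_{\infty,5})$ \emph{is} generated by complex conjugation $c$, and $\Q_{\infty,5}=\Omega_{\infty,5}^+$. Wait — that cannot be right, since $\Q_{n,5}$ has degree $5^{n-1}$ over $\Q$ while $\Omega_{n,5}^+$ has degree $2\cdot 5^{n-1}$; the point is that $\Omega_{n,5}$ has degree $4\cdot 5^{n-1}$, its Galois group is $(\Z/5^n)^\times\cong \Z/4\times\Z/5^{n-1}$, and $\Q_{n,5}$ is the fixed field of the $\Z/4$ factor. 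So the relevant subgroup is cyclic of order $4$, containing $c$ as its unique order-$2$ element, and I must produce $S$-units fixed by a chosen generator $\sigma$ of this $\Z/4$ (equivalently fixed by $\sigma$ and $c$ both). Concretely, if $g$ is a primitive root mod $5$, then $\sigma:\zeta\mapsto\zeta^{g}$ generates; symmetrizing a super-cyclotomic expression over the orbit $\{1,g,g^2,g^3\}=\{1,2,4,3\}$ will land in $\Q_{\infty,5}$.

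So the key steps are: (1) write down (or search by computer, as the authors did for the identities \eqref{eqn:identity1}--\eqref{eqn:identity10}) ternary relations $F-G=kH$ among super-cyclotomic polynomials that are \emph{$\sigma$-symmetric} after the substitution $X\mapsto\zeta_{5^n}$ — for instance by taking products over the four conjugates, $F(X)=\prod_{a\in\{1,2,3,4\}}F_0(X^a)$ and similarly for $G,H$, starting from a seed relation $F_0-G_0=kH_0$; (2) verify via Lemma~\ref{lem:cycpol} that $\varepsilon_n=F(\zeta_{5^n})/H(\zeta_{5^n})$ and $\delta_n=k-\varepsilon_n$ lie in $\OO(\Q_{\infty,5},S)^\times$ (for $k=2$, $S=\emptyset$, checking in addition that no $\Phi_{5^t}$ divides $F,G,H$, so that Lemma~\ref{lem:cycpol}(b) gives units rather than just $S$-units); (3) show the points are pairwise distinct for varying $n$ by the now-standard argument: express $\varepsilon_n$ as a word in the basis elements $1-\zeta_{5^n}^k$ of $V_n/\langle\pm\zeta_{5^n}\rangle$ and apply Lemma~\ref{lem:quo}/Lemma~\ref{lem:quo2} to see $\varepsilon_n\notin V_{n-1}$ for large $n$ — picking $i$ with $5\nmid i$ and $\pm i$ not congruent to the other exponents, exactly as in the proof of Lemma~\ref{lem:cycsunit} and Lemma~\ref{lem:puninf}.

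The main obstacle is step~(1): finding ternary super-cyclotomic relations that survive the fourfold symmetrization with the right constants $k\in\{1,2,4\}$. The symmetrization $\prod_{a}F_0(X^a)$ multiplies degrees by $4$ and, worse, does not interact linearly with subtraction, so one does not simply inherit a relation from a seed; instead one must either find relations among the higher-degree symmetrized polynomials directly (feasible by the same kind of computer search, now over super-cyclotomic polynomials invariant under $X\mapsto X^a$), or cleverly use the relation \eqref{eqn:EF}, $E(\alpha,\beta)+F(\alpha,\beta)=1$, from Section~\ref{sec:unit+} with $\alpha,\beta$ chosen so that $E,F$ are already $\sigma$-invariant — e.g. averaging $E(\zeta^i,\zeta^j)$ suitably over the Galois orbit. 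I expect that for $k=1$ a massaged version of $E(\zeta^i,\zeta^j)$ or a product identity does the job, for $k=2$ one adapts an identity like $\Phi_2(X)^2-\Phi_4(X)=2X$ after symmetrization (noting $\Phi_4$ and $\Phi_2$ avoid $\Phi_{5^t}$, securing genuine units), and for $k=4$ one uses $\Phi_2(X)^2-\Phi_1(X)^2=4X$ similarly; the routine verifications in steps~(2)--(3) then go through verbatim as in the earlier lemmas.
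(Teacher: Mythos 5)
Your overall strategy is the same as the paper's: descend from $\Omega_{\infty,5}$ to $\Q_{\infty,5}$ by producing $S$-unit solutions invariant under the order-$4$ subgroup of $\Gal(\Omega_{\infty,5}/\Q)$, build them out of cyclotomic-polynomial values so that Lemma~\ref{lem:cycpol} gives ($S$-)units, and prove pairwise distinctness via Lemma~\ref{lem:quo}/\ref{lem:quo2}. But the proposal has a genuine gap exactly where you flag it: step (1), the construction of the invariant ternary identities, is never carried out, and the routes you suggest do not deliver it. The naive symmetrization $F(X)=\prod_a F_0(X^a)$ does not inherit a linear relation (as you admit), and massaging $E(\zeta^i,\zeta^j)$ from Section~\ref{sec:unit+} does not obviously produce anything $\sigma$-invariant: for instance with $\beta=\alpha^a$ one finds $\sigma\bigl(E(\zeta^i,\zeta^{ai})\bigr)=F(\zeta^i,\zeta^{ai})=1-E(\zeta^i,\zeta^{ai})$, so the value is moved, not fixed. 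What the paper actually uses are multivariable identities invariant under the $4$-cycle $(x_1,x_2,x_3,x_4)$, evaluated at $(\zeta,\zeta^a,\zeta^{a^2},\zeta^{a^3})$ with $a\in\Z_5^\times$, $a^2=-1$, $a\equiv 2\pmod 5$. For $k=2$ and $k=4$ these are two-variable identities within easy reach of your sketch, e.g.
\[
\Phi_3(u)\Phi_3(v)-\Phi_6(u)\Phi_6(v)=2(u+v)(uv+1),\qquad
\bigl(\Phi_2(u)\Phi_2(v)\bigr)^2-\bigl(\Phi_1(u)\Phi_1(v)\bigr)^2=4(u+v)(uv+1),
\]
with $u=x_1/x_3$, $v=x_2/x_4$; but you did not write them down, and your stated expectation (``adapt $\Phi_2(X)^2-\Phi_4(X)=2X$ after symmetrization'') is not how they arise. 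For $k=1$ the situation is worse: the paper needs the genuinely nontrivial four-variable relation
\[
(x_1x_2^2+x_3x_4^2)(x_1^2x_4+x_2x_3^2)-(x_1^2x_2+x_3^2x_4)(x_1x_4^2+x_2^2x_3)
=(x_1-x_3)(x_2-x_4)(x_1x_2-x_3x_4)(x_1x_4-x_2x_3),
\]
found by a targeted computer search over products of $\Psi_m(u,v)$ invariant under the $4$-cycle, and nothing in your proposal produces it or a substitute. Since the lemma's content for $k=1$ (and the unit, rather than $S$-unit, statement for $k=2$) rests entirely on exhibiting such identities, the proposal as written does not prove the lemma.

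Two smaller points. First, the descent must use the Teichm\"{u}ller-type lift: an automorphism $\zeta_{5^n}\mapsto\zeta_{5^n}^g$ with $g$ a literal primitive root mod $5$ (hence mod $25$) topologically generates all of $\Gal(\Omega_{\infty,5}/\Q)$ and fixes only $\Q$; you need $a\in\Z_5^\times$ with $a^4=1$ (the paper's $a^2=-1$, $a\equiv 2\pmod 5$), so ``symmetrize over $\{1,2,4,3\}$'' only makes sense with this $5$-adic exponent. Second, once the identities are in hand, your steps (2)--(3) do go through essentially as in the paper (checking the exponents of $\zeta$ are prime to $5$, resp.\ distinct modulo $5$, before invoking Lemma~\ref{lem:cycpol} and Lemma~\ref{lem:quo}), so the missing ingredient is precisely the explicit invariant relations, not the verification machinery.
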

\begin{proof}
Let $a \in \Z_5^\times$ 
be the element satisfying 
\[
	a^2=-1, \qquad a \equiv 2 \pmod{5};
\]
such an element exists and is unique by Hensel's Lemma.
Let 
	$\sigma : \Omega_{\infty,5} \rightarrow \Omega_{\infty,5}$
	be the field automorphism satisfying
\[
	\sigma(\zeta_{5^n}) \; = \; \zeta_{5^n}^a
\]
for $n \ge 1$.
Note that $\sigma$ is an automorphism of order $4$,
and fixes a subfield of $\Omega_{\infty,5}$ of index $4$. This subfield
is precisely $\Q_{\infty,5}$.

Let
	\[
	\begin{gathered}
F= (x_1 x_2^2 + x_3 x_4^2) (x_1^2 x_4 + x_2 x_3^2),\\
G=(x_1^2 x_2 + x_3^2 x_4) (x_1 x_4^2 + x_2^2 x_3),\\
H=(x_1-x_3)(x_2-x_4)(x_1 x_2-x_3 x_4)(x_1 x_4-x_2 x_3).
	\end{gathered}
\]
Observe 
 $F$, $G$, $H$ are invariant
under the $4$-cycle $(x_1,x_2,x_3,x_4)$.
One can check that $F-G=H$.
Let $n \ge 2$ and write $\zeta=\zeta_{5^n}$.
Let
	\[
		\varepsilon_n=\frac{F(\zeta,\zeta^{a},\zeta^{a^2},\zeta^{a^3})}{H(\zeta,\zeta^{a},\zeta^{a^2},\zeta^{a^3})},
		\qquad
		\delta_n=-\frac{G(\zeta,\zeta^{a},\zeta^{a^2},\zeta^{a^3})}{H(\zeta,\zeta^{a},\zeta^{a^2},\zeta^{a^3})}.
	\]
	From the identity $F-G=H$ we have
	$\varepsilon_n+\delta_n=1$.
	We shall show that $\varepsilon_n$, $\delta_n \in \OO(\Q_{\infty,5},S)^\times$. 

Since $\sigma$ cyclically permutes
	$\zeta,\zeta^a,\zeta^{-1},\zeta^{-a}$
	we conclude that $f(\zeta,\zeta^a,\zeta^{-1},\zeta^{-a}) \in 
	\Q_{\infty,5}$ for $f=F$, $G$, $H$. Thus
	$\varepsilon_n$, $\delta_n \in \Q_{\infty,5}$.
	Moreover, 
	\begin{gather*}
	F \; =\; x_2 x_3^3 x_4^2 \cdot \Phi_2(x_1 x_2^2/x_3 x_4^2) \Phi_2(x_1^2 x_4/x_2 x_3^2),\\
	G\; =\; x_2^2 x_3^3 x_4 \cdot \Phi_2(x_1^2 x_2/x_3^2 x_4) 
		\Phi_2(x_1 x_4^2/x_2^2 x_3),\\
	H\; =\; x_2 x_3^3 x_4^2 \cdot \Phi_1(x_1/x_3) \cdot \Phi_1(x_2/x_4) 
		\cdot \Phi_1(x_1x_2/x_3x_4) \cdot \Phi_1(x_1 x_4/x_2x_3).
	\end{gather*}
Hence
\[
	\begin{split}
		\varepsilon_n &=\frac{\Phi_2(\zeta^{2+4a}) \Phi_2(\zeta^{4-2a})}{\Phi_1(\zeta^2)\Phi_1(\zeta^{2a})\Phi_1(\zeta^{2+2a})\Phi_1(\zeta^{2-2a})}\\
		&=\frac{(1-\zeta^{4+8a})(1-\zeta^{8-4a})}{(1-\zeta^2)(1-\zeta^{2a})(1-\zeta^{2+2a})(1-\zeta^{2-2a})(1-\zeta^{2+4a})(1-\zeta^{4-2a})}.
	\end{split}
\]
and
\[
	\begin{split}
		\delta_n &=\frac{-\zeta^{2a}\Phi_2(\zeta^{4+2a}) \Phi_2(\zeta^{2-4a})}{\Phi_1(\zeta^2)\Phi_1(\zeta^{2a})\Phi_1(\zeta^{2+2a})\Phi_1(\zeta^{2-2a})}\\
		&=\frac{-\zeta^{2a}(1-\zeta^{8+4a})(1-\zeta^{4-8a})}{(1-\zeta^2)(1-\zeta^{2a})(1-\zeta^{2+2a})(1-\zeta^{2-2a})(1-\zeta^{4+2a})(1-\zeta^{2-4a})}.
	\end{split}
\]
We checked, using the fact that $a \equiv 7 \pmod{25}$,
that the exponents of $\zeta$ in the above expressions
for $\varepsilon_n$ and $\delta_n$
all have $5$-adic valuation $0$ or $1$.
It follows from this that
$\varepsilon_n$, $\delta_n \in V_n \subseteq \OO(\Omega_n,S)^\times$ 
for $n \ge 2$.
Hence $\varepsilon_n$, $\delta_n \in \Q_{\infty,5} \cap \OO(\Omega_n,S)^\times=\OO(\Q_{\infty,5},S)^\times$
for $n \ge 2$. To complete the proof of the lemma
for $k=1$
it is enough to show that $\varepsilon_n \ne \varepsilon_m$
for $n>m$, and for this it is enough to show that
$\varepsilon_n \notin \langle \pm \zeta_{5^n},V_{n-1} \rangle$ for $n \ge 2$.
Since $a \equiv 7 \pmod{25}$ we see that
\[
	4+8a \equiv 10, \qquad 8-4a \equiv 5, \qquad
	2+4a \equiv 5, \qquad 4-2a \equiv 15 \pmod{25}.
\]
Thus the factors
\[
	1-\zeta^{4+8a}, \qquad 1-\zeta^{8-4a},
	\qquad 1-\zeta^{2+4a}, \qquad 1-\zeta^{4-2a}
\]
all belong to $V_{n-1}$. Hence it is enough to show that
\begin{equation}\label{eqn:epssimple}
		(1-\zeta^2)(1-\zeta^{2a})(1-\zeta^{2+2a})(1-\zeta^{2-2a})
\end{equation}
does not belong to $\langle \pm \zeta_{5^n},V_{n-1}\rangle$. However, the exponents
$2$, $2a$, $2+2a$, $2-2a$ are respectively $2$, $4$, $1$, $3$
modulo $5$, and hence certainly distinct modulo $5^{n-1}$.
It follows from Lemma~\ref{lem:quo} that the product \eqref{eqn:epssimple}
does not belong to $\langle \pm \zeta_{5^n},V_{n-1}\rangle$ completing the proof for $k=1$.

The proof for $k=2$ is similar, and is based on the identity
$F-G=2H$ where
\[
\begin{gathered}
	F \; = \; (x_1^2 + x_1 x_3 + x_3^2)(x_2^2+x_2 x_4 + x_4^2) \; = \;
	x_3^2 x_4^2 \cdot \Phi_3(x_1/x_3) \cdot \Phi_3(x_2/x_4),\\
	G \; = \; (x_1^2 - x_1 x_3 + x_3^2)(x_2^2-x_2 x_4 + x_4^2) \; = \;
	x_3^2 x_4^2\cdot \Phi_6(x_1/x_3)\cdot \Phi_6(x_2/x_4),\\
	H \; = \; (x_1 x_4+x_2 x_3)(x_1 x_2+x_3 x_4) \; = \;
	x_2 x_3^2 x_4 \cdot \Phi_2(x_1 x_4/x_2 x_3) \cdot \Phi_2(x_1x_2/x_3x_4),
\end{gathered}
\]
and likewise the proof for $k=4$ is based on the identity $F-G=4H$
where
\[
\begin{gathered}
    F \; = \; (x_1 + x_3)^2 (x_2 + x_4)^2 = x_3^2 x_4^2 \cdot \Phi_2(x_1/x_3)^2 \Phi_2(x_2/x_4)^2, \\
    G \; = \; (x_1 - x_3)^2 (x_2 - x_4)^2 = x_3^2 x_4^2 \cdot \Phi_1(x_1/x_3)^2 \Phi_1(x_2/x_4)^2, \\
    H \; = \; (x_1 x_2 + x_3 x_4)(x_1 x_4 + x_2 x_3) = x_2 x_3^2 x_4 \cdot \Phi_2(x_1 x_2 / x_3 x_4) \Phi_2(x_1 x_4 / x_2 x_3).
\end{gathered}
\]
\end{proof}

\noindent \textbf{Remark.} It is appropriate to remark on how the identities in the above proof were found.
Write
\[
	{\Psi}_m(X,Y)=Y^{\varphi(m)} \Phi_m(X/Y)
\]
for the homogenization of the $m$-th cyclotomic polynomial.
Now consider 
\[
	f(x_1,x_2,x_3,x_4)=\Psi_m(u,v)
\]
where $u$, $v$ are monomials in variables $x_1,x_2,x_3,x_4$.
Let $\ell$ be a prime.  We see that evaluating any such $f$ at $(\zeta^\alpha,\zeta^\beta,\zeta^\gamma,\zeta^\delta)$
gives an element of $V_n$ (provided that it does not vanish). We considered products of such $f$
of total degree up to $20$ and picked out ones that are invariant under the $4$-cycle $(x_1,x_2,x_3,x_4)$,
and searched for ternary relations between them. This yielded the identities used in the above proof.

\begin{proof}[Proof of Theorems~\ref{thm:Siegel} and~\ref{thm:Siegel2} for $\ell=5$]
Theorems~\ref{thm:Siegel}
and~\ref{thm:Siegel2} for $\ell=5$ follow immediately
from  Lemma~\ref{lem:Q5}.
\end{proof}

\section{The $S$-unit equation over $\Q_{\infty,7}$} \label{sec:Q7}
\begin{lem}\label{lem:Q7}
	Let $\upsilon_7$ be the unique prime of $\Q_{\infty,7}$ above $7$,
			and write $S=\{\upsilon_7\}$.
			Then
		$(\PP^1\setminus \{0,1,\infty\})(\OO(\Q_{\infty,7},S))$
	is infinite.
\end{lem}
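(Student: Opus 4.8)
The plan is to mimic the structure of the proofs of Lemmas~\ref{lem:Q5} and~\ref{lem:puninf}, adapting them to $\ell = 7$. Since $\Q_{\infty,7}$ is the fixed field of the order-$6$ subgroup of $\Gal(\Omega_{\infty,7}/\Q) \cong \Z_7^\times$, a natural strategy is to find a polynomial identity $F - G = H$ in variables $x_1, \dots, x_6$ (or possibly $x_1, x_2, x_3$ if one works with a smaller quotient) where $F$, $G$, $H$ are built out of homogenized cyclotomic polynomials $\Psi_m$ evaluated at monomials, and which are invariant under the $6$-cycle coming from a generator $\sigma$ of $\Gal(\Omega_{\infty,7}/\Q_{\infty,7})$. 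Concretely, one picks $a \in \Z_7^\times$ of order $6$ (say $a \equiv 3 \pmod 7$, which is a primitive root), lets $\sigma(\zeta_{7^n}) = \zeta_{7^n}^a$, and evaluates the identity at $(\zeta^{a^0}, \zeta^{a^1}, \dots, \zeta^{a^5})$ with $\zeta = \zeta_{7^n}$. Invariance of $F$, $G$, $H$ under cyclic permutation of the variables then forces the resulting values to lie in $\Q_{\infty,7}$.

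The key steps, in order, would be: (1) Produce the identity. As in the remark following Lemma~\ref{lem:Q5}, one searches by computer over products of $\Psi_m(u,v)$ with $u,v$ monomials in $x_1,\dots,x_6$ of bounded total degree, restricting to those invariant under the $6$-cycle $(x_1,\dots,x_6)$, and looks for a ternary relation $F - G = kH$ with $k=1$. (2) Verify that evaluating $F$, $G$, $H$ at $(\zeta, \zeta^a, \dots, \zeta^{a^5})$ gives nonzero elements: each factor $\Psi_m(\zeta^\alpha, \zeta^\beta) = \zeta^{\beta\varphi(m)}\Phi_m(\zeta^{\alpha-\beta})$ is nonzero so long as $\zeta^{\alpha - \beta}$ is not a primitive $m$-th root of unity, which holds for $n$ large since the relevant exponents are fixed $7$-adic integers. (3) Apply Lemma~\ref{lem:cycpol} to conclude $\varepsilon_n := F/H$ and $\delta_n := -G/H$ lie in $V_n \subseteq \OO(\Omega_n, S)^\times$, hence — being $\sigma$-invariant — in $\OO(\Q_{\infty,7}, S)^\times$, and $\varepsilon_n + \delta_n = 1$ by the identity. (4) Show the $\varepsilon_n$ are pairwise distinct for $n$ large, by writing $\varepsilon_n$ as a product $\prod_i (1 - \zeta_{7^n}^{b_i})^{c_i}$ (times a power of $\zeta_{7^n}$), identifying a factor $1 - \zeta_{7^n}^{b_1}$ with $b_1 \not\equiv 0 \pmod 7$ whose exponent $c_1$ is nonzero and whose exponent is not matched by any $1 - \zeta_{7^n}^{b}$ with $b \equiv b_1 \pmod{7^{n-1}}$ but $b \not\equiv b_1 \pmod{7^n}$, and invoking Lemma~\ref{lem:quo} to conclude $\varepsilon_n \notin \langle \pm\zeta_{7^n}, V_{n-1}\rangle$. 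One can alternatively use the valuation formula \eqref{eqn:valij}-style computation if the identity is of the $E(\alpha,\beta)+F(\alpha,\beta)=1$ type, as in Lemma~\ref{lem:puninf}.

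The main obstacle I expect is step (1): finding an explicit super-cyclotomic ternary identity in six (or three) variables that is simultaneously invariant under the relevant cyclic group and has $k=1$. The $\ell=5$ case used a $4$-cycle and degree-$\le 20$ monomials; for $\ell = 7$ the symmetry group is larger and the search space of invariant products grows, so one may need either a cleverer ansatz (e.g.\ building $F$, $G$, $H$ as products over orbits of monomials under the $6$-cycle, as was implicitly done for $\ell=5$) or to accept higher degree. A fallback, if no clean $6$-variable identity presents itself, is to exploit the intermediate field: $\Q_{\infty,7}$ contains $\Omega_{\infty,7}^+ \cap \Q_{\infty,7}$-type subfields, but since $7 \equiv 1 \pmod 3$ and $7 \equiv 3 \pmod 4$ the cubic subfield of $\Q(\zeta_7)$ over $\Q$ sits inside $\Q_{\infty,7}$, so one might instead look for an identity invariant only under the order-$3$ subgroup and work in the corresponding degree-$2$ extension of $\Q_{\infty,7}$ — but this would not directly give points over $\Q_{\infty,7}$ itself, so the six-variable approach is the one to push through. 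Once the identity is in hand, steps (2)–(4) are routine verifications of the same character as those already carried out for $\ell=5$.
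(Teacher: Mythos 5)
Your plan hinges entirely on step (1): finding a ternary identity $F-G=H$ in six variables, built from factors $\Psi_m(u,v)$, invariant under the $6$-cycle attached to a generator of the full order-$6$ group $\Gal(\Omega_{\infty,7}/\Q_{\infty,7})$. This is exactly the first thing the paper tries, and it reports that an extensive search failed to produce any such triple; so the route you propose to ``push through'' is, as far as is known, a dead end, and your proposal supplies no candidate identity and no argument that one exists. Steps (2)--(4) are indeed routine once an identity is in hand, but without (1) there is no proof.

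The irony is that the fallback you raise and then dismiss is where the actual argument lives. One does \emph{not} need invariance under the full order-$6$ group at the level of a polynomial identity: the paper takes the order-$3$ element $\sigma(\zeta)=\zeta^a$ with $a^2+a+1=0$, $a\equiv 2\pmod 7$, and three-variable polynomials
\[
F=(x_1x_2^2+x_3^3)(x_2x_3^2+x_1^3)(x_3x_1^2+x_2^3),\quad
H=(x_1^2x_2+x_3^3)(x_2^2x_3+x_1^3)(x_3^2x_1+x_2^3),
\]
with $G=(x_1-x_2)(x_2-x_3)(x_3-x_1)(x_1x_2-x_3^2)(x_2x_3-x_1^2)(x_3x_1-x_2^2)$ and $F-G=H$, all invariant under the $3$-cycle $(x_1,x_2,x_3)$; evaluation at $(\zeta,\zeta^a,\zeta^{a^2})$ then lands only in the fixed field of $\sigma$. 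The missing idea is how to also force invariance under complex conjugation $c$: replace $F,G,H$ by $F(x_1^2,x_2^2,x_3^2)/(x_1x_2x_3)^6$, etc., which rewrites every factor in the form $w+w^{-1}$ or $w-w^{-1}$ and hence is visibly fixed by $c$ after evaluation, while preserving the relation $F'-G'=H'$ and the $S$-unit property. Your objection that the order-$3$ approach ``would not directly give points over $\Q_{\infty,7}$ itself'' is thus surmountable, and overcoming it is precisely the content of the paper's proof; without this (or some other concrete identity), your write-up has a genuine gap.
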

\begin{proof}
In view of the proof of Lemma~\ref{lem:Q5}, it would be natural
to seek polynomials $F$, $G$, $H$ 
in variables $x_1,\dotsc,x_6$ satisfying
the following properties
\begin{itemize}
\item $F\pm G=H$;
\item $F$, $G$, $H$ are invariant under the $6$-cycle
		$(x_1,x_2,\dotsc,x_6)$;
\item each is a product of polynomials
	\[
		f(x_1,x_2,\dotsc,x_6)=\Psi_m(u,v)
	\]
with $u$, $v$ monomials in $x_1,\dotsc,x_6$.
\end{itemize}
Unfortunately, an extensive search has failed to produce
any such triple of polynomials. We therefore need to proceed
a little differently.

Let $a \in \Z_7$ 
be the element satisfying 
\[
	a^2+a+1=0, \qquad a \equiv 2 \pmod{7};
\]
such an element exists and is unique by Hensel's Lemma.
Let 
	$\sigma$, $c : \Omega_{\infty,7} \rightarrow \Omega_{\infty,7}$
	be the field automorphisms satisfying
\[
	\sigma(\zeta_{7^n}) \; = \; \zeta_{7^n}^a, \qquad
	c(\zeta_{7^n}) \; = \; \zeta_{7^n}^{-1}
\]
for $n \ge 1$. Then $\Q_{\infty,7}$ is the field fixed by  
the subgroup of $\Gal(\Omega_{\infty,7}/\Q)$ generated by $\sigma$ and $c$.
We work with polynomials in variables $x_1$, $x_2$, $x_3$. 
Let
\[
	\begin{gathered}
	F \; =\; (x_1 x_2^2+x_3^3)(x_2 x_3^2+x_1^3)(x_3 x_1^2+x_2^3)\\
	G \; = \; (x_1-x_2)(x_2-x_3)(x_3-x_1)(x_1 x_2-x_3^2) (x_2 x_3-x_1^2)(x_3 x_1-x_2^2)\\
	H \; =\; (x_1^2 x_2+x_3^3)(x_2^2 x_3+x_1^3)(x_3^2 x_1+x_2^3).\\
	\end{gathered}
\]
These satisfy the identity $F-G=H$. Moreover, they are invariant
under the $3$-cycle $(x_1,x_2,x_3)$ and all the factors are 
	of the form $\Psi_m(u,v)$ where $m=1$ or $2$,
	and where $u$, $v$ are suitable monomials in $x_1$, $x_2$, $x_3$.
	Evaluating any of $F$, $G$, $H$ at $(\zeta,\zeta^a,\zeta^{a^2})$
	yields an $S$-unit belonging to 
	$\Omega_{n,7}^{\langle \sigma \rangle}$.
Now we let 
	\[
		F^\prime=\frac{F(x_1^2,x_2^2,x_3^2)}{x_1^6 x_2^6 x_3^6},
		\qquad
		G^\prime=\frac{G(x_1^2,x_2^2,x_3^2)}{x_1^6 x_2^6 x_3^6},
		\qquad
		H^\prime=\frac{H(x_1^2,x_2^2,x_3^2)}{x_1^6 x_2^6 x_3^6}.
	\]
Observe that the rational functions $F^\prime$, $G^\prime$, $H^\prime$
satisfy $F^\prime-G^\prime=H^\prime$ and are moreover invariant
under the $3$-cycle $(x_1,x_2,x_3)$. Moreover, $F^\prime$, $G^\prime$,
$H^\prime$ evaluated at $(\zeta,\zeta^a,\zeta^{a^2})$
yield $S$-units belonging to $\Omega_{n,7}^{\langle \sigma \rangle}$.
We need to check that these in fact belong to
$\Q_{n-1,7}=\Omega_{n,7}^{\langle \sigma, c \rangle}$
and so we need to check that these expressions
are invariant under $c$. This follows immediately on observing
that $F^\prime$, $G^\prime$, $H^\prime$ may be rewritten
as 
\[
	\begin{gathered}
		F^\prime \; =\; 
		\left(\frac{x_1 x_2^2}{x_3^3}+\frac{x_3^3}{x_1 x_2^2}\right)
		\left(\frac{x_2 x_3^2}{x_1^3}+\frac{x_1^3}{x_2 x_3^2}\right)
		\left(\frac{x_3 x_1^2}{x_2^3}+\frac{x_2^3}{x_3 x_1^2}\right)\\
	G^\prime \; = \; 
		\left(\frac{x_1}{x_2}-\frac{x_2}{x_1} \right)
		\left(\frac{x_2}{x_3}-\frac{x_3}{x_2} \right)
		\left(\frac{x_3}{x_1}-\frac{x_1}{x_3} \right)
		\left(\frac{x_1 x_2}{x_3^2}-\frac{x_3^2}{x_1 x_2}\right) 
		\left(\frac{x_2 x_3}{x_1^2}-\frac{x_1^2}{x_2 x_3} \right)
		\left(\frac{x_3 x_1}{x_2^2} - \frac{x_2^2}{x_3 x_1} \right)\\
		H^\prime \; =\; 
		\left(\frac{x_1^2 x_2}{x_3^3}+\frac{x_3^3}{x_1^2 x_2}\right)
		\left(\frac{x_2^2 x_3}{x_1^3}+\frac{x_1^3}{x_2^2 x_3}\right)
		\left(\frac{x_3^2 x_1}{x_2^3}+\frac{x_2^3}{x_3^3 x_1}\right).\\
	\end{gathered}
\]
Thus $F^\prime$, $G^\prime$, $H^\prime$ evaluated at $(\zeta,\zeta^a,\zeta^{a^2})$ yield elements of $\OO(\Q_{\infty,7},S)^\times$.
We write
\[
	\varepsilon_n=\frac{F^\prime(\zeta,\zeta^a,\zeta^{a^2})}{H^\prime(\zeta,\zeta^a,\zeta^{a^2})}, \qquad
	\delta_n=-\frac{G^\prime(\zeta,\zeta^a,\zeta^{a^2})}{H^\prime(\zeta,\zeta^a,\zeta^{a^2})}.
\]
Then $\varepsilon_n$, $\delta_n$ belong to $\OO(\Q_{\infty,7},S)^\times$
and satisfy $\varepsilon_n+\delta_n=1$. 
In fact it is straightforward to check that
$\varepsilon_n \notin \langle \pm \zeta_{7^n}, V_{n-1} \rangle$,
from which it follows that $\varepsilon_n \ne \varepsilon_m$
for $n>m$. The details are similar to those of the proof of 
Lemma~\ref{lem:Q5} and we omit them.
\end{proof}

\section{Isogeny classes of elliptic curves over $\Q_{\infty,\ell}$}\label{sec:elliptic}

The purpose of this section is to prove Theorem~\ref{thm:finite}.
Since isogenous elliptic curves share the
same set of bad primes, the corresponding
theorem over number fields is an
immediate consequence of Shafarevich's theorem.
However, as we intend to show in the following
section, Shafarevich's theorem does not
generalize to elliptic curves over
$\Q_{\infty,\ell}$. 
We shall instead rely on a theorem of Kato
to control $\Q_{\infty,\ell}$-points on certain
modular Jacobians.

\bigskip

Our first lemma shows that there are only finitely many
primes that can divide the degree of a
cyclic isogeny of $E$.
\begin{lem}\label{lem:Serre1}
Let $\ell$ be a prime and let $E/\Q_{\infty,\ell}$ be an elliptic curve
without potential complex multiplication.
Then there is a constant $B$, depending on $E$,
such that for primes $p \ge B$, the elliptic curve
$E$ has no $p$-isogenies defined over $\Q_{\infty,\ell}$.
\end{lem}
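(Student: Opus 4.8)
The plan is to translate the existence of a large‑degree isogeny into a statement about the mod‑$p$ Galois representation of $E$ and then exploit that $\Q_{\infty,\ell}$ is a \emph{pro‑$\ell$} extension of any of its finite layers. Since $E$ is cut out by finitely many coefficients, it is already defined over some layer $K_0=\Q_{r,\ell}$, which is a number field, and $E/K_0$ still has no potential complex multiplication. For a prime $p\ne\ell$ write $\rho_p\colon\Gal(\overline{\Q}/K_0)\to\GL_2(\F_p)$ for the action on $E[p]$. The basic dictionary is that $E$ admits a $p$-isogeny defined over $\Q_{\infty,\ell}$ precisely when $\rho_p\bigl(\Gal(\overline{\Q}/\Q_{\infty,\ell})\bigr)$ stabilises a line of $\F_p^2$, i.e.\ is contained in a Borel subgroup of $\GL_2(\F_p)$. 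One cannot conclude directly from a uniform isogeny bound over a fixed number field (Mazur, Merel, \dots), since such an isogeny could only be defined over a very high layer $\Q_{s,\ell}$, of unbounded degree over $\Q$; circumventing this is the crux.

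The main step will be a short group‑theoretic argument. Put $G=\rho_p\bigl(\Gal(\overline{\Q}/K_0)\bigr)$ and $H=\rho_p\bigl(\Gal(\overline{\Q}/\Q_{\infty,\ell})\bigr)$. Because $\Q_{\infty,\ell}/K_0$ is Galois with $\Gal(\Q_{\infty,\ell}/K_0)$ an open subgroup of $\Gal(\Q_{\infty,\ell}/\Q)\cong\Z_\ell$, the subgroup $H$ is normal in $G$ and $G/H$ is a finite quotient of $\Z_\ell$, hence a cyclic $\ell$-group. Next I would invoke Serre's open image theorem for the non‑CM curve $E/K_0$: there is a constant $B_0=B_0(E)$ with $\mathrm{SL}_2(\F_p)\subseteq G$ for all primes $p\ge B_0$. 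Fix now a prime $p\ge B:=\max\{B_0,\ell+1,5\}$, so in particular $p\ne\ell$. Since $p\ge 5$, the group $\mathrm{SL}_2(\F_p)$ is perfect, so its image in the abelian quotient $G/H$ is trivial, forcing $\mathrm{SL}_2(\F_p)\subseteq H$. Finally, $\mathrm{SL}_2(\F_p)$ acts irreducibly on $\F_p^2$ — the unipotents $\bigl(\begin{smallmatrix}1&1\\0&1\end{smallmatrix}\bigr)$ and $\bigl(\begin{smallmatrix}1&0\\1&1\end{smallmatrix}\bigr)$ have no common eigenline — so no subgroup of $\GL_2(\F_p)$ containing it lies in a Borel. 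Hence $H$ fixes no line of $E[p]$, i.e.\ $E$ has no $p$-isogeny over $\Q_{\infty,\ell}$, which proves the lemma with this $B$.

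The difficulty here is conceptual rather than computational: the point is to recognise that the layers of $\Q_{\infty,\ell}$ have unbounded degree (so uniform isogeny bounds are inapplicable) and to replace them by the combination of the pro‑$\ell$‑ness of $\Q_{\infty,\ell}/K_0$, the perfectness of $\mathrm{SL}_2(\F_p)$, and the number‑field form of Serre's open image theorem — which is exactly where the no‑potential‑CM hypothesis enters. Once these ingredients are assembled the remaining verifications are routine.
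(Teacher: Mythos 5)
Your proof is correct and follows essentially the same route as the paper: descend $E$ to a finite layer $\Q_{r,\ell}$, invoke Serre's open image theorem there (using the no-potential-CM hypothesis), and exploit that $\Gal(\Q_{\infty,\ell}/\Q_{r,\ell})$ is a quotient of $\Z_\ell$ to show the image over $\Q_{\infty,\ell}$ cannot lie in a Borel. The only cosmetic difference is in the last group-theoretic step: the paper argues via solvability (a Borel is solvable, cyclic layers are solvable, but $\Gal(\Q_{n,\ell}(E[p])/\Q_{n,\ell})\cong\GL_2(\F_p)$ is not), whereas you use perfectness of $\mathrm{SL}_2(\F_p)$ inside the abelian quotient $G/H$ — the same idea in different clothing.
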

\begin{proof}
Let $n$ be the least positive integer such
that $E$ admits a model defined over $\Q_{n,\ell}$.
	By a famous theorem of Serre \cite{Serre72}, there is a constant
$B$, depending on $E$, such that for $p \ge B$ 
the mod $p$ representation
\[
	\overline{\rho}_{E,p} \; : \; \Gal(\overline{\Q}/\Q_{n,\ell})
	\rightarrow \GL_2(\F_p)
\]
is surjective. We may suppose that $B \ge 5$. 
Thus, for $p \ge B$, the Galois group
$\Gal(\Q_{n,\ell}(E[p])/\Q_{n,\ell})$ is isomorphic to $\GL_2(\F_p)$
which is non-solvable. 
We will show that $E$ has no $p$-isogeny defined over
$\Q_{\infty,\ell}$. Suppose otherwise. Then
such an isogeny is in fact defined over
$\Q_{m,\ell}$ for some $m \ge n$.
It follows that 
the extension $\Q_{m,\ell}(E[p])/\Q_{m,\ell}$ 
has Galois group
isomorphic to a subgroup of a Borel
subgroup of $\GL_2(\F_p)$, with is solvable.
As the extension $\Q_{m,\ell}/\Q_{n,\ell}$
is cyclic, we conclude that
$\Q_{m,\ell}(E[p])/\Q_{n,\ell}$
is solvable. 
However, this contains the non-solvable
subextension
$\Q_{n,\ell}(E[p])/\Q_{n,\ell}$, giving a contradiction.
\end{proof}

We shall make use of the following theorem of Kato 
\cite[Theorem 14.4]{Kato} building
on work of Rohrlich \cite{Rohrlich}.
\begin{thm}[Kato]
Let $\ell$ be a prime.
Let $A$ be an abelian variety defined over $\Q$
and admitting a surjective map $J_1(N) \rightarrow A$
for some $N \ge 1$. Then $A(\Q_{\infty,\ell})$ 
is finitely generated.
\end{thm}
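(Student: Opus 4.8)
The statement is Kato's theorem \cite[Theorem 14.4]{Kato}, and the plan is to follow the structure of its proof. The first step is a reduction. Finite generation of Mordell--Weil groups is invariant under $\Q$-isogeny and is inherited by abelian subvarieties and quotients, and $J_1(N)$ is isogenous over $\Q$ to a product of the modular abelian varieties $A_f$ attached to the weight-$2$ newforms $f$ of level dividing $N$; hence any quotient $A$ of $J_1(N)$ is isogenous over $\Q$ to a product of such $A_f$, and it suffices to treat $A=A_f$ for a single newform $f$. Write $\Gamma=\Gal(\Q_{\infty,\ell}/\Q)\cong\Z_\ell$, let $\Gamma_n=\Gal(\Q_{\infty,\ell}/\Q_{n,\ell})$, and set $\Lambda=\Z_\ell[[\Gamma]]$, which upon choosing a topological generator $\gamma$ of $\Gamma$ becomes the regular local ring $\Z_\ell[[T]]$, $T=\gamma-1$.

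The analytic heart of the plan is to control the Selmer group over the tower. Let $X=\mathrm{Sel}_{\ell^\infty}(A_f/\Q_{\infty,\ell})^\vee$ be the Pontryagin dual of the $\ell^\infty$-Selmer group; it is a finitely generated $\Lambda$-module, and at each finite layer one has $\mathrm{rank}\,A_f(\Q_{n,\ell})\le\mathrm{corank}_{\Z_\ell}\mathrm{Sel}_{\ell^\infty}(A_f/\Q_{n,\ell})$. The plan is to show that $X$ is a \emph{torsion} $\Lambda$-module. For this I would invoke Kato's Euler system: from Beilinson--Kato elements built out of Siegel units in the $K_2$ of modular curves, Kato constructs an Euler system for the $\ell$-adic Galois representation attached to $f$, and his explicit reciprocity law identifies the bottom class of this system, through the Bloch--Kato dual exponential, with the $\ell$-adic $L$-function $L_\ell(f)\in\Lambda\otimes\Q_\ell$. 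The Euler system method then shows that nontriviality of the zeta element --- equivalently $L_\ell(f)\ne 0$ --- forces $X$ to be $\Lambda$-torsion. This non-vanishing is precisely where Rohrlich's theorem \cite{Rohrlich} enters: the interpolation property of $L_\ell(f)$ exhibits the twisted special values $L(f\otimes\chi,1)$, for $\chi$ ranging over the finite-order characters of $\Gamma$, among its values, and Rohrlich proved that $L(f\otimes\chi,1)\ne 0$ for all but finitely many such $\chi$. Hence $X$ is finitely generated and torsion over $\Lambda$.

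It then remains to descend from this Iwasawa-theoretic statement to finite generation of $A_f(\Q_{\infty,\ell})$ itself. Here I would run the standard control argument --- Mazur's control theorem when $A_f$ has good ordinary or multiplicative reduction at $\ell$, and the signed ($\pm$-)Selmer refinements of Kobayashi, Pollack and Perrin-Riou in the supersingular case, as carried out in \cite[\S14]{Kato} --- comparing $\mathrm{Sel}_{\ell^\infty}(A_f/\Q_{n,\ell})$ with $\mathrm{Sel}_{\ell^\infty}(A_f/\Q_{\infty,\ell})^{\Gamma_n}$, which up to bounded error is $(X/\omega_n X)^\vee$ with $\omega_n=(1+T)^{\ell^n}-1$. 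Since $X$ is finitely generated and torsion over $\Lambda$, the $\Z_\ell$-coranks of $X/\omega_n X$ are bounded, uniformly in $n$, by the Iwasawa $\lambda$-invariant of $X$; combining this with the (comparatively elementary) finiteness of $A_f(\Q_{\infty,\ell})_{\mathrm{tors}}$ --- for instance via Imai's theorem on torsion of abelian varieties over cyclotomic $\Z_\ell$-extensions --- one deduces, by the descent of \cite[\S14]{Kato}, that the increasing sequence of groups $A_f(\Q_{n,\ell})$ stabilizes. Hence $A_f(\Q_{\infty,\ell})=A_f(\Q_{n_0,\ell})$ for some $n_0$, which is finitely generated by the Mordell--Weil theorem over the number field $\Q_{n_0,\ell}$.

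The main obstacle, unsurprisingly, is the construction of Kato's Euler system together with the explicit reciprocity law computing its image in terms of $L_\ell(f)$: this is the technical heart of \cite{Kato}, resting on $p$-adic Hodge theory and a delicate analysis of the $K$-theory and \'etale cohomology of modular curves. Rohrlich's non-vanishing theorem \cite{Rohrlich}, though classical in spirit, is equally indispensable, being exactly what makes the resulting bound on $X$ non-vacuous over the whole tower $\Q_{\infty,\ell}$. For the purposes of the present paper one of course simply quotes \cite[Theorem 14.4]{Kato}, which packages all of this.
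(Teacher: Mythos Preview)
The paper does not prove this theorem: it is stated as a result of Kato \cite[Theorem~14.4]{Kato}, building on Rohrlich \cite{Rohrlich}, and is used as a black box in the proof of Lemma~\ref{lem:Serre2}. Your proposal goes well beyond this, giving a reasonable high-level outline of the ideas behind Kato's proof (reduction to newform quotients $A_f$, the Euler system of Beilinson--Kato elements, the explicit reciprocity law relating it to the $p$-adic $L$-function, Rohrlich's non-vanishing to ensure $\Lambda$-torsion of the dual Selmer group, and a control argument to descend to finite generation). You yourself note in the final sentence that for the purposes of the present paper one simply quotes the result, and that is exactly what the paper does.

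One small caution on your sketch: the invocation of Kobayashi's signed Selmer groups in the supersingular case is not how Kato's own \S14 proceeds; his argument there is more uniform and does not split into ordinary/supersingular cases via $\pm$-Selmer groups. This does not affect correctness of the conclusion, but if you intend the sketch to track \cite{Kato} faithfully you should adjust that step.
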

\begin{lem}\label{lem:Serre2}
Let $p$, $\ell$ be primes. Let $E$ be an elliptic curve
defined over $\Q_{\infty,\ell}$ without
potential complex multiplication. Then, for $m$ sufficiently
large, $E$ has no $p^m$-isogenies defined over $\Q_{\infty,\ell}$.
\end{lem}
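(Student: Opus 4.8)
The plan is to reduce the statement about $p^m$-isogenies to a statement about rational points on the modular curves $X_0(p^m)$, and then apply Kato's theorem via the fact that $J_0(p^m)$ is a quotient of $J_1(p^m)$. More precisely, suppose for contradiction that for infinitely many $m$ the curve $E$ admits a cyclic $p^m$-isogeny defined over $\Q_{\infty,\ell}$. Such an isogeny corresponds to a $\Q_{\infty,\ell}$-rational point on $X_0(p^m)$; since $E$ has no potential CM, this point is not a CM point, and in particular it is not a cusp (for $m$ large), so it gives a noncuspidal non-CM point on $X_0(p^m)(\Q_{\infty,\ell})$.

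The key steps, in order: first, observe that by Lemma~\ref{lem:Serre1} only finitely many primes $p$ can occur as the degree of a cyclic isogeny of $E$, so it suffices to treat a single fixed prime $p$ and rule out arbitrarily large powers. Second, fix $m$ large enough that $X_0(p^m)$ has genus $\ge 2$ (which happens for all but finitely many $m$). Third, note that $X_0(p^m)$ is defined over $\Q$, and its Jacobian $J_0(p^m)$ receives a surjection from $J_1(p^m)$, hence by Kato's theorem $J_0(p^m)(\Q_{\infty,\ell})$ is finitely generated. Fourth, invoke the deduction described in the introduction of the excerpt (following Mazur's conjecture, but here it is a \emph{theorem} in this case because Kato gives finite generation): for a curve $X/\Q$ of genus $\ge 2$ with $J(\Q_{\infty,\ell})$ finitely generated, one has $X(\Q_{\infty,\ell}) = X(\Q_{r,\ell})$ for some finite layer $r$, via the Abel--Jacobi embedding, and $X(\Q_{r,\ell})$ is finite by Faltings. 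Thus $X_0(p^m)(\Q_{\infty,\ell})$ is finite for each large $m$.

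Finiteness alone is not quite enough, since for each $m$ the finite set could be nonempty; I need to rule out non-cuspidal non-CM points for \emph{all} sufficiently large $m$. Here is where the degeneracy maps come in: there are natural maps $X_0(p^{m+1}) \to X_0(p^m)$ defined over $\Q$, so a $\Q_{\infty,\ell}$-point on $X_0(p^{m+1})$ pushes down to one on $X_0(p^m)$, corresponding to the fact that a $p^{m+1}$-isogeny contains a $p^m$-isogeny. So if $E$ had $p^m$-isogenies for infinitely many $m$, then (since a $p^m$-isogeny is built from a compatible tower) one obtains a point of $X_0(p^m)(\Q_{\infty,\ell})$ for \emph{every} $m$ in an unbounded set, all lying over a fixed non-CM point; but then by a uniformity argument — or more simply, because all these points live in the layers $\Q_{r,\ell}$ and the Abel--Jacobi images sit inside the finitely generated group $J_0(p^m)(\Q_{\infty,\ell})$ — one derives a contradiction with Faltings applied at a single layer, or with the boundedness of torsion/non-CM points. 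The cleanest route: a point of $X_0(p^m)(\Q_{\infty,\ell})$ for large $m$ corresponds to a subgroup scheme of $E$ cyclic of order $p^m$; the mod-$p$ reduction of the associated Galois representation then has image in a Borel, and by Lemma~\ref{lem:Serre1} combined with the argument of Lemma~\ref{lem:Serre2}'s companion results one shows the representation is reducible, forcing potential CM or contradicting that $E$ is defined over a finite layer.

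I expect the main obstacle to be making rigorous the passage from ``finite for each $m$'' to ``empty for all large $m$'': one must either use the compatibility of isogenies under the degeneracy maps $X_0(p^{m+1}) \to X_0(p^m)$ to produce a point at every level from points at infinitely many levels, or argue directly that a $\Q_{\infty,\ell}$-point of $X_0(p^m)$ with $m$ large descends to some fixed finite layer $\Q_{r,\ell}$ with $r$ \emph{independent of $m$} (using that the Mordell--Weil rank and torsion of $J_0(p^m)(\Q_{\infty,\ell})$ stabilize at a bounded layer), and then invoke a uniform boundedness statement — such as the one following from the resolution of the strong uniform boundedness of isogenies over the fixed number field $\Q_{r,\ell}$, or simply Faltings plus the observation that $E/\Q_{r,\ell}$ can have cyclic $p^m$-isogenies only for bounded $m$ once $E$ is non-CM. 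Handling the cusps and CM points (finitely many, all accounted for) is routine by comparison.
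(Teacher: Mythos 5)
Your setup coincides with the paper's: Kato applied to $J_0$ (as a quotient of $J_1$), then the Abel--Jacobi embedding to get $X_0(\Q_{\infty,\ell})=X_0(\Q_{n,\ell})$, then Faltings for finiteness. But you yourself flag the decisive step -- passing from ``$X_0(p^m)(\Q_{\infty,\ell})$ is finite for each $m$'' to ``no cyclic $p^m$-isogenies for $m$ large'' -- and none of your suggested ways of closing it works. The ``cleanest route'' via Lemma~\ref{lem:Serre1} and ``the argument of Lemma~\ref{lem:Serre2}'s companion results'' is circular (you are proving Lemma~\ref{lem:Serre2}), and in any case Lemma~\ref{lem:Serre1} only controls \emph{large} primes $p$, not high powers of a fixed small $p$; note the paper's remark that Serre's irreducibility of the $p$-adic Tate module settles the case $\ell\ne p$ but not $\ell=p$, which is exactly the case your sketch cannot reach. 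The alternative ``descend to a fixed layer $\Q_{r,\ell}$ with $r$ independent of $m$'' also fails: the field of definition of a cyclic subgroup of order $p^m$ sits inside $\Q_{n,\ell}(E[p^m])\cap\Q_{\infty,\ell}$ and may grow with $m$, and the finitely generated group whose rank you want to ``stabilize'' is $J_0(p^m)(\Q_{\infty,\ell})$, a different group for each $m$, so there is no uniformity in $m$ to exploit. Finally, the degeneracy-map observation only produces points at every level below $m$, which is consistent with each $X_0(p^j)(\Q_{\infty,\ell})$ being finite and nonempty; by itself it yields no contradiction.

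The missing idea in the paper is a pigeonhole at a single fixed level. Let $r$ be least with $X=X_0(p^r)$ of genus $\ge 2$, set $k=\#X(\Q_{\infty,\ell})$ (finite by the argument you both give), and $s=kr$. If $E$ had a cyclic isogeny $\psi:E\to E'$ of degree $p^s$ over $\Q_{\infty,\ell}$, factor it as $E=E_0\to E_1\to\cdots\to E_k=E'$ with each step cyclic of degree $p^r$ and defined over $\Q_{\infty,\ell}$. Since $E$ has no potential CM, the $E_i$ are pairwise non-isomorphic over $\overline{\Q}$ (an isomorphism $E_i\cong E_j$ would give a nontrivial cyclic endomorphism), so the $k$ steps give $k+1$ curves supporting distinct points of $X(\Q_{\infty,\ell})$, contradicting $\#X(\Q_{\infty,\ell})=k$. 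This converts finiteness of the point set of one fixed modular curve into an explicit bound on the exponent, with no uniformity in $m$ and no appeal to residual representations; any cyclic $p^m$-isogeny with $m\ge s$ contains a cyclic $p^s$-sub-isogeny, so the lemma follows. You would need to add this (or an equivalent device) to make your proposal a proof.
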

\begin{proof}
Let $r$ be the least positive integer such that
the modular curve
$X=X_0(p^r)$ has genus at least $2$,
and write $J=J_0(p^r)$ for the corresponding
modular Jacobian.
It follows from Kato's theorem that 
$J(\Q_{\infty,\ell})$ is finitely generated,
and therefore that $J(\Q_{\infty,\ell})=J(\Q_{n,\ell})$
for some $n \ge 1$. Consider the Abel-Jacobi map
\[
	X \hookrightarrow J, \qquad P \mapsto [P-\infty]
\]
where $\infty \in X(\Q)$ denotes the infinity cusp.
It follows from this embedding that $X(\Q_{\infty,\ell})=X(\Q_{n,\ell})$.
By Faltings' theorem, this set is finite.

Let $k=\# X(\Q_{\infty,\ell})$ and let 
$s=kr$. To prove the lemma we in fact
show that $E$ has no
cyclic isogenies of degree $p^s$ defined
over $\Q_{\infty,\ell}$. Suppose otherwise,
and let $\psi : E \rightarrow E^\prime$ be a
cyclic isogeny of degree $p^s$
defined over $\Q_{\infty,\ell}$. Then, 
we may factor $\psi$ into a sequence 
of cyclic isogenies defined over $\Q_{\infty,\ell}$
\[
	E=E_0 \; \xrightarrow{\psi_1} 
	\; E_1 \; \xrightarrow{\psi_2} \; E_2 \;
	\cdots \xrightarrow{\psi_k} \; E_k=E^\prime
\]
where $\psi_i$ is degree $p^r$.
Note that $E_i$ and $E_j$ are non-isomorphic over $\overline{\Q}$
for $i \ne j$; indeed they are related by a cyclic isogeny
and $E$ does not have potential complex multiplication.
Thus the elliptic curves $E_0,E_1,\dotsc,E_k$
support distinct $\Q_{\infty,\ell}$-points on $X=X_0(p^r)$.
This contradicts the fact that $\# X(\Q_{\infty,\ell})=k$.
\end{proof}

\noindent \textbf{Remark.}
A famous theorem of Serre
\cite[Section 2.1]{SerreAbelian}
asserts that the $p$-adic Tate module
of a non-CM elliptic curve defined over a number field
is irreducible. It is in fact possible to deduce Lemma~\ref{lem:Serre2}
from Serre's theorem for $\ell \ne p$, but we have been
unable to do this for $\ell=p$.

\begin{proof}[Proof of Theorem~\ref{thm:finite}]
Let $E^\prime$ belong to the $\Q_{\infty,\ell}$-isogeny
class of $E$. Let $\psi : E \rightarrow E^\prime$
be an isogeny defined over $\Q_{\infty,\ell}$.
This has kernel of the form $\Z/a \times \Z/ab$
where $a$, $b$ are positive integers, and so it can be
	factored into a composition
\[
	E \rightarrow E/E[a] \cong E \rightarrow E^\prime
\]
where the final morphism is cyclic of degree $b$. Thus 
to prove the proposition, it is enough to show that $E$
has finitely many cyclic
isogenies defined over $\Q_{\infty,\ell}$.
The degree of any such isogeny is divisible
by primes $p < B$ where $B$ is as in Lemma~\ref{lem:Serre1}.
Also, for any $p<B$, we know the exponent of $p$
in the degree of a cyclic isogeny $E \rightarrow E^\prime$
is bounded by Lemma~\ref{lem:Serre2}. 
Thus there are finitely many cyclic isogenies
of $E$ defined over $\Q_{\infty,\ell}$.
\end{proof}

\section{From $S$-unit equations to elliptic curves}\label{sec:antishafarevich}
The aim of this section is to prove Theorem~\ref{thm:Shafarevich}.
We start by recalling a few facts about Legendre elliptic curves
(Proposition III.1.7 of \cite{SilvermanAEC}
and its proof).
Let $K$ be a field of characteristic $\ne 2$ and let $\lambda \in (\PP^1 \setminus \{0,1,\infty\})(K)$.
Associated to $\lambda$ is the Legendre elliptic curve
\[
	E_\lambda \; : \; Y^2 = X(X-1)(X-\lambda).
\]
This model respectively has discriminant and $j$-invariant
\begin{equation}\label{eqn:jinv}
	\Delta=16 \lambda^2(1-\lambda)^2, \qquad
	j=\frac{64 (\lambda^2-\lambda+1)^3}{\lambda^2 (1-\lambda)^2}.
\end{equation}
Moreover, for $\lambda$, $\mu \in (\PP^1 \setminus \{0,1,\infty\})(K)$,
the Legendre elliptic curves $E_\lambda$ and $E_\mu$ are isomorphic over $K$
(or over $\overline{K}$) if and only if
\[
	\mu \; \in \; \left\{ 
	\lambda, \, \frac{1}{\lambda}, \, 1-\lambda, \, 
	\frac{1}{1-\lambda}, \, \frac{\lambda}{\lambda-1}, \,
	\frac{\lambda-1}{\lambda}
	\right\}.
\]
Now let $K$ be a number field and $S$ a finite set of non-archimedean places.
We let $S^\prime$ be the set of non-archimedean places
which are either in $S$ or above $2$. We let
$\lambda \in (\PP^1 \setminus \{0,1,\infty\})(\OO(K,S))$.
Then $\lambda$, $1-\lambda \in \OO(K,S)^\times$.
It follows from the expression
for the discriminant that $E_\lambda$ has good reduction away for $S^\prime$.

\subsection*{Proof of Theorem~\ref{thm:Shafarevich}}
Let $\ell=2$, $3$, $5$ or $7$. 
Let $S$ be given by \eqref{eqn:S}
and let
$S^\prime=S \cup \{\upsilon_2\}$
as in the statement of Theorem~\ref{thm:Shafarevich}.
In proving Theorem~\ref{thm:Siegel}
we constructed, for each positive integer $n$,
elements $\varepsilon_n$, $\delta_n=1-\varepsilon_n$, belonging
$\Q_{\infty,\ell} \cap V_n \subseteq \OO(\Q_{\infty,\ell},S)^\times$,
and moreover verified, for $n \ge 2$, that
$\varepsilon_n \notin \langle \zeta_{\ell^n},V_{n-1} \rangle$.
We let
\[
	E_n \; : \; Y^2=X(X-1)(X-\varepsilon_n).
\]
Then $E_n$ is defined over $\Q_{\infty,\ell}$ and has good reduction
away from $S^\prime$. We claim, for $n>m$,
that $E_n$ and $E_m$ are not isomorphic, even over $\overline{\Q}$.
To see this, suppose $E_n$ and $E_m$ are isomorphic. Then
$\varepsilon_n$ equals one of $\varepsilon_m^{\pm 1}$,
$\delta_m^{\pm 1}$, $(-\varepsilon_m \delta_m)^{\pm 1}$.
This gives a contradiction as all of these belong
to $\langle \pm \zeta_{\ell^n},V_{n-1} \rangle$.
This proves the claim.

It remains to show that the $E_n$ form infinitely many isogeny
classes over $\Q_{\infty,\ell}$. However, this immediately
follows from Theorem~\ref{thm:finite} and the following
lemma. \qed

\begin{lem}
For $n$ sufficiently large, $E_n$ does not have potential
complex multiplication.
\end{lem}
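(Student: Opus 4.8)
The plan is to argue by contradiction. Suppose $E_n$ has potential complex multiplication for infinitely many $n$; I will show that then $j(E_n)$ takes only finitely many values, contradicting the fact --- already established in the proof of Theorem~\ref{thm:Shafarevich} --- that $E_n$ and $E_m$ are not $\overline{\Q}$-isomorphic for $m < n$ with $n$ large, equivalently that the $j(E_n)$ are pairwise distinct for $n$ large.

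The essential structural input is that $\Gal(\Q_{\infty,\ell}/\Q)\cong\Z_\ell$, so every subfield of $\Q_{\infty,\ell}$ of finite degree over $\Q$ is one of the layers $\Q_{k,\ell}$, and in particular is \emph{cyclic} over $\Q$ of $\ell$-power degree. Since $j(E_n)$ is a rational function of $\varepsilon_n\in\Q_{\infty,\ell}$ (see \eqref{eqn:jinv}), it lies in $\Q_{\infty,\ell}$, so $\Q(j(E_n))/\Q$ is cyclic of $\ell$-power degree. The steps are then:
\begin{enumerate}[(1)]
\item If $E_n$ has potential CM, then $j(E_n)$ is a singular modulus: an algebraic integer whose minimal polynomial over $\Q$ (the class polynomial) is irreducible of degree $h(\mathcal{O}_n)$, where $\mathcal{O}_n$ is an imaginary quadratic order with fraction field $F_n$, and $H_{\mathcal{O}_n}:=F_n(j(E_n))$ is the ring class field of $\mathcal{O}_n$, with $\Gal(H_{\mathcal{O}_n}/F_n)\cong\Cl(\mathcal{O}_n)$ and $[H_{\mathcal{O}_n}:\Q]=2h(\mathcal{O}_n)$.
\item Since $\Q(j(E_n))/\Q$ is Galois it contains all $\Q$-conjugates of $j(E_n)$, so $H_{\mathcal{O}_n}=F_n\cdot\Q(j(E_n))$; comparing degrees ($2h(\mathcal{O}_n)\ne h(\mathcal{O}_n)$) forces $F_n\cap\Q(j(E_n))=\Q$, hence $\Cl(\mathcal{O}_n)\cong\Gal(H_{\mathcal{O}_n}/F_n)\cong\Gal(\Q(j(E_n))/\Q)$ is cyclic of $\ell$-power order.
\item Complex conjugation fixes $\Q(j(E_n))\subset\Q_{\infty,\ell}\subset\R$ pointwise, so it is a nontrivial element of the normal order-$2$ subgroup $\Gal(H_{\mathcal{O}_n}/\Q(j(E_n)))$ of $\Gal(H_{\mathcal{O}_n}/\Q)$, hence is central; as it acts on $\Cl(\mathcal{O}_n)$ by inversion, $\Cl(\mathcal{O}_n)$ has exponent at most $2$.
\item A cyclic group of $\ell$-power order and exponent $\le 2$ has order $1$ if $\ell$ is odd and order $\le 2$ if $\ell=2$, so $h(\mathcal{O}_n)\le 2$ in all cases. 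By the classical (effective) resolution of the class number one and class number two problems for imaginary quadratic orders, $\mathcal{O}_n$ ranges over a finite set, hence $j(E_n)$ ranges over a finite set of singular moduli --- the desired contradiction.
\end{enumerate}

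I do not anticipate a genuine obstacle here: the argument only assembles standard facts of complex multiplication theory (irreducibility of the class polynomial, $[H_{\mathcal{O}}:\Q]=2h(\mathcal{O})$, the inversion action of complex conjugation on $\Cl(\mathcal{O})$) together with the structure of $\Gal(\Q_{\infty,\ell}/\Q)$. The one point that needs care is the passage from ``$\Q(j(E_n))/\Q$ is cyclic'' to ``$\Cl(\mathcal{O}_n)$ has exponent $\le 2$'', and for this the cleanest route uses that $\Q_{\infty,\ell}$ is \emph{totally real}, so that complex conjugation fixes $\Q(j(E_n))$. As an aside, for $\ell=5$ and $\ell=7$ one can bypass complex multiplication entirely: the explicit formulas for $\varepsilon_n$ together with Lemma~\ref{lem:cycval1} yield $\upsilon_\ell(j(E_n))<0$, so $E_n$ has potentially multiplicative reduction at $\upsilon_\ell$ and hence admits no potential good reduction there, ruling out potential CM; but for $\ell=2$ and $\ell=3$ the value $j(E_n)$ is in fact an algebraic integer, and the global argument above appears to be needed.
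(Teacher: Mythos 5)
Your argument is correct, and its CM-theoretic core is essentially the paper's: both exploit that $\Q(j(E_n))\subset\Q_{\infty,\ell}$ is totally real, Galois and cyclic of $\ell$-power degree over $\Q$, and deduce that the class group of the CM order is simultaneously cyclic of $\ell$-power order and of exponent at most $2$ (the paper cites Shimura for the exponent-$2$ statement coming from total reality, whereas you reprove it directly by noting that complex conjugation lies in the normal order-$2$ subgroup $\Gal(H_{\mathcal{O}_n}/\Q(j))$, hence is central, yet acts on $\Cl(\mathcal{O}_n)$ by inversion --- a pleasant elementary substitute). Where you genuinely diverge is the finish. The paper concludes only that $j(E_n)\in\Q$ (or $\Q(\sqrt{2})$ when $\ell=2$), uses $[\Q(\varepsilon_n):\Q(j(E_n))]\le 6$ to place $\varepsilon_n$ in a fixed number field of degree at most $12$, and then invokes Siegel's theorem on the $S$-unit equation together with the distinctness of the $\varepsilon_n$. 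You instead conclude $h(\mathcal{O}_n)\le 2$, invoke the finiteness of imaginary quadratic orders with class number at most $2$ so that $j(E_n)$ ranges over a finite set of singular moduli, and contradict the pairwise distinctness of the $j(E_n)$, i.e.\ the $\overline{\Q}$-non-isomorphy established just before the lemma (no circularity, since that claim does not use the lemma). Both finishes are sound: the paper's stays within the $S$-unit framework already running through the paper and needs no class-number input, while yours pins $j(E_n)$ to an explicit finite list (and effectivity is not even needed --- Heilbronn's finiteness suffices); your aside that for $\ell=5,7$ one has $\upsilon_\ell(j(E_n))<0$, so potential CM is excluded purely locally, is also a correct and nice additional observation.
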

\begin{proof}
Suppose $E_n$ has potential complex multiplication by an order $R$
in an imaginary quadratic field $K$.
Write $j=j(E_n)$.
By standard CM theory \cite[Theorem 5.7]{Shimura}, 
we know that  $\Gal(K(j)/K) \cong \Pic(R)$
and $[\Q(j) : \Q]=[K(j):K]$.
Since in our case $\Q(j)/\Q$ is Galois,
$\Gal(\Q(j)/\Q) \cong \Gal(K(j)/K) \cong \Pic(R)$.
However, $\Q(j) \subset \Q_{\infty,\ell}$ is totally real. 
It follows \cite[page 124]{Shimura}
that $\Pic(R)$ is an elementary abelian $2$-group.
Since $\Q(j) \subset \Q_{\infty,\ell}$,
the Galois group of $\Q(j)/\Q$ is cyclic of order $\ell^n$
	for some $n$.
Thus, $j \in \Q$ if $\ell \ne 2$,
and $j \in \Q_{1,2}=\Q(\sqrt{2})$ if $\ell =2$.
However, from the expression for $j$ in \eqref{eqn:jinv}
we know that $[\Q(\varepsilon_n) : \Q(j)] \le 6$. 
Thus $\varepsilon_n$ belongs to a subfield
of $\Q_{\infty,\ell}$ of degree at most $12$.
The lemma follows since, by Siegel's theorem, the $S$-unit equation
has only finitely many solutions in any number field.
\end{proof}

\section{Hyperelliptic curves over $\Q_{\infty,\ell}$
with few bad primes}\label{sec:hyp}
Let $\ell$ be an odd prime.
Let $g \ge 2$ be an  integer satisfying
\begin{equation}\label{eqn:genus}
\begin{cases}
 \text{$g \equiv (\ell-3)/4\, $ or $\, -1  \pmod{(\ell-1)/2}$}  &  \text{ if $\ell \equiv 3 \pmod{4}$}\\
\text{$g \equiv -1  \pmod{(\ell-1)/4}$} & \text{ if $\ell \equiv 1 \pmod{4}$}. 
\end{cases}
\end{equation}
Then there is a positive integer $k$ such that
\begin{equation}\label{eqn:gk}
k \cdot \left(\frac{\ell-1}{2}\right) \; = \; 
\begin{cases}
 \text{$2g+1$ or $2g+2$} & \text{if $\ell \equiv 3 \pmod{4}$}\\
 2g+2 & \text{if $\ell \equiv 1 \pmod{4}$}.
\end{cases}
\end{equation}
Let $n \ge 2$ be a positive integer satisfying 
\begin{equation}\label{eqn:k}
 \ell^{n-1} \; \ge \; k.
\end{equation}
In this section we construct
a hyperelliptic $D_n$ curve of genus $g$ defined over $\Q_{n-1,\ell}$
with good reduction away from the primes above $2$, $\ell$.

Write
\[
\cZ_n=\{ \zeta \in \Omega_{n,\ell} \quad : \quad \zeta^{\ell^n}=1, \quad \zeta^{\ell^i} \ne 1 \textrm{ if } i < n \} 
\]
for the set of primitive $\ell^n$-th roots of $1$. Write
\[
\cZ_n^+ \; = \; \{ \zeta+\zeta^{-1} \; : \; \zeta \in \cZ_n\} \; \subset \; \Omega_{n,\ell}^+.
\]
We note that any element of $\cZ_n^+$ generates $\Omega_{n,\ell}^+$.
\begin{lem}
$\displaystyle \# \cZ_n^+ = 
\baseprime^{n-1} (\baseprime - 1)/2
$.
\end{lem}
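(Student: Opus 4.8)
The plan is to exhibit the map $\pi \colon \cZ_n \to \cZ_n^+$, $\zeta \mapsto \zeta + \zeta^{-1}$, as a two-to-one surjection, from which the count follows since $\#\cZ_n = \varphi(\ell^n) = \ell^{n-1}(\ell-1)$.

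First I would note that $\pi$ is surjective by the very definition of $\cZ_n^+$. Next, for a fixed $c \in \cZ_n^+$, any $\zeta \in \pi^{-1}(c)$ satisfies $\zeta^2 - c\zeta + 1 = 0$, so the fibre $\pi^{-1}(c)$ is contained in the set of roots of the quadratic $X^2 - cX + 1$, hence has at most two elements; and if $\zeta$ is one root then $\zeta^{-1}$ is the other (their product is the constant term $1$). Thus $\pi^{-1}(\pi(\zeta)) = \{\zeta, \zeta^{-1}\}$ for every $\zeta \in \cZ_n$. Finally, this fibre has exactly two elements because $\zeta \ne \zeta^{-1}$: otherwise $\zeta^2 = 1$, forcing $\zeta$ to have order dividing $2$, which is impossible since $\zeta$ has order $\ell^n \ge \ell \ge 3$ as $\ell$ is odd. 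Therefore every fibre of $\pi$ has size exactly $2$, giving $\#\cZ_n^+ = \tfrac{1}{2}\#\cZ_n = \ell^{n-1}(\ell-1)/2$.

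There is no real obstacle here; the only point requiring a moment's care is checking that the two elements $\zeta$ and $\zeta^{-1}$ of a fibre are genuinely distinct, which uses that $\ell$ is odd (so that $-1 \notin \cZ_n$). One could alternatively phrase this via the degree-$2$ extension $\Omega_{n,\ell}/\Omega_{n,\ell}^+$: complex conjugation acts freely on $\cZ_n$ with orbits of size $2$, and $\cZ_n^+$ is precisely the orbit set, but the elementary counting above is the most direct route.
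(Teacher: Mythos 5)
Your proof is correct and is essentially the argument in the paper: both count the fibres of $\zeta \mapsto \zeta+\zeta^{-1}$ on $\cZ_n$, the paper establishing that each fibre is $\{\alpha,\alpha^{-1}\}$ via the identity $(\alpha+\alpha^{-1})-(\beta+\beta^{-1})=\alpha^{-1}(1-\alpha\beta)(1-\alpha\beta^{-1})$, while you use the equivalent observation that the fibre over $c$ consists of the roots of $X^2-cX+1$. Your explicit check that $\zeta\ne\zeta^{-1}$ (left implicit in the paper) is a harmless addition, so there is nothing to correct.
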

\begin{proof}
We note that $\# \cZ_n =
\varphi(\baseprime^\level) = \baseprime^{\level - 1} (\baseprime - 1)
$. Suppose $\alpha$, $\beta \in \cZ_n$. Then
\begin{equation}\label{eqn:alphabeta}
(\alpha+\alpha^{-1})-(\beta+\beta^{-1}) \; = \; \alpha^{-1} \cdot (1-\alpha \beta) \cdot (1-\alpha \beta^{-1}).
\end{equation}
Thus $\alpha+\alpha^{-1}=\beta+\beta^{-1}$ if and only if $\alpha=\beta$ or $\alpha=\beta^{-1}$.
The lemma follows.
\end{proof}
Write
\[
G_n=\Gal(\Omega_{n,\ell}^+/\Q_{n-1,\ell}), \qquad
H_n=\Gal(\Omega_{n,\ell}^+/\Omega_{n-1,\ell}^+).
\]
We note that these are both cyclic subgroups
of $\Gal(\Omega_{n,\ell}^+/\Q)$ having orders
\[
	\# G_n = (\ell-1)/2, \qquad \#H_n=\ell.
\]
\begin{lem}\label{lem:etadef}
Fix $\zeta \in \cZ_n$.
Let
\begin{equation}\label{eqn:etadef}
	\eta_i \; =  \; \zeta^{1+\ell^{n-1}(i-1)} + \zeta^{-1-\ell^{n-1}(i-1)},
	\qquad 1 \le i \le \ell.
\end{equation}
Then $\eta_1,\dotsc,\eta_\ell \in \cZ_n^+$
form a single orbit under the action of $H_n$,
but have pairwise disjoint orbits under the action of $G_n$.
\end{lem}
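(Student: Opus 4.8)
The plan is to use the abelian Galois theory of $\Omega_{n,\ell}^+/\Q$ — inside which $G_n$ and $H_n$ both sit as subgroups — together with the fact, noted above, that every element of $\cZ_n^+$ is a primitive element for $\Omega_{n,\ell}^+$ over $\Q$. The first step, that $\eta_i\in\cZ_n^+$, is immediate: since $n\ge 2$ the exponent $1+\ell^{n-1}(i-1)$ is $\equiv 1\pmod\ell$, hence prime to $\ell$, so $\zeta^{1+\ell^{n-1}(i-1)}$ is again a primitive $\ell^n$-th root of unity and $\eta_i=\zeta^{1+\ell^{n-1}(i-1)}+\zeta^{-1-\ell^{n-1}(i-1)}\in\cZ_n^+$.

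Next I would identify $H_n$ concretely. For $0\le t\le\ell-1$ let $\tau_t$ be the automorphism of $\Omega_{n,\ell}$ with $\tau_t(\zeta)=\zeta^{1+\ell^{n-1}t}$. Each $\tau_t$ commutes with complex conjugation, hence restricts to $\Omega_{n,\ell}^+$, and $\tau_t(\zeta^\ell)=\zeta^{\ell+\ell^n t}=\zeta^\ell$, so $\tau_t$ fixes $\zeta^\ell+\zeta^{-\ell}$, a generator of $\Omega_{n-1,\ell}^+$; therefore $\tau_t\in H_n$. A short computation (using $2(n-1)\ge n$) gives $\tau_1^{\,t}=\tau_t$, and $\tau_1$ acts nontrivially on $\Omega_{n,\ell}^+$ since $\tau_1(\eta_1)=\eta_2\ne\eta_1$; as $\#H_n=\ell$ is prime, $\tau_1$ generates $H_n$, so $H_n=\{\tau_0,\dots,\tau_{\ell-1}\}$. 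Because $\tau_{i-1}(\eta_1)=\eta_i$, the set $\{\eta_1,\dots,\eta_\ell\}$ is precisely the $H_n$-orbit of $\eta_1$; and since $\eta_1$ generates $\Omega_{n,\ell}^+$ it has trivial stabilizer in $\Gal(\Omega_{n,\ell}^+/\Q)$, so this orbit has $\#H_n=\ell$ elements and in particular $\eta_1,\dots,\eta_\ell$ are pairwise distinct. This settles the first assertion.

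For the second assertion, suppose $\sigma\in G_n$ with $\sigma(\eta_i)=\eta_j$. Write $\eta_i=\tau(\eta_1)$ and $\eta_j=\tau'(\eta_1)$ with $\tau,\tau'\in H_n$. Since $\Gal(\Omega_{n,\ell}^+/\Q)$ is abelian, the automorphism $g:=\sigma\,\tau\,(\tau')^{-1}$ fixes $\eta_1$, hence is the identity on $\Omega_{n,\ell}^+=\Q(\eta_1)$; therefore $\sigma=\tau'\tau^{-1}\in H_n$. But $\#G_n=(\ell-1)/2$ and $\#H_n=\ell$ are coprime, so $G_n\cap H_n=\{1\}$, forcing $\sigma=\mathrm{id}$ and hence $\eta_i=\eta_j$, i.e.\ $i=j$. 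Contrapositively, for $i\ne j$ no element of $G_n$ carries $\eta_i$ to $\eta_j$, so the $G_n$-orbits of $\eta_1,\dots,\eta_\ell$ are pairwise disjoint.

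The argument is essentially formal once $H_n$ has been described explicitly, so I do not anticipate a genuine obstacle; the only points needing a little care are checking that the $\tau_t$ really lie in $H_n$ and generate it, and confirming that $\eta_1,\dots,\eta_\ell$ are distinct — the latter also follows directly from~\eqref{eqn:alphabeta}, which yields $\eta_i-\eta_j=\zeta^{-1-\ell^{n-1}(i-1)}\bigl(1-\zeta^{2+\ell^{n-1}(i+j-2)}\bigr)\bigl(1-\zeta^{\ell^{n-1}(i-j)}\bigr)$, and this is nonzero when $i\ne j$ since neither exponent is divisible by $\ell^n$ (the first is $\equiv 2\not\equiv 0\pmod\ell$, and $\ell\nmid i-j$).
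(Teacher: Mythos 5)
Your proposal is correct and follows essentially the same route as the paper: identify the substitution $\zeta\mapsto\zeta^{1+\ell^{n-1}}$ as (restricting to) a generator $\tau$ of $H_n$ with $\eta_i=\tau^{i-1}(\eta_1)$, then use that $\eta_1$ generates $\Omega_{n,\ell}^+$ together with the abelianness of $\Gal(\Omega_{n,\ell}^+/\Q)$ and $G_n\cap H_n=\{1\}$ to force $i=j$. The only difference is that you spell out some details the paper leaves implicit (that $\tau_t\in H_n$, $\tau_1^t=\tau_t$, distinctness of the $\eta_i$ via \eqref{eqn:alphabeta}, and the coprimality argument for $G_n\cap H_n=\{1\}$), which is fine.
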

\begin{proof}
Let $\kappa \in \Gal(\Omega_{n,\ell}/\Q)$
be given by $\kappa(\zeta)=\zeta^{1+\ell^{n-1}}$.
We note that $\kappa$ has order $\ell$ and fixes
$\Omega_{n-1,\ell}$. We denote
the restriction of $\kappa$ to $\Omega_{n,\ell}^+$
by $\tau$; this is a cyclic generator of $H_n$.
Note that
\[
\eta_i=\tau^{i-1} (\zeta+\zeta^{-1}), \qquad 1 \le i \le \ell.
\]
Let $\sigma_1$, $\sigma_2 \in G_n$. 
Let $1 \le i < j \le \ell$ and suppose 
$\sigma_1 (\eta_i) =\sigma_2 (\eta_j)$.
Thus $\sigma_1 \tau^{i-1} (\eta_1)=\sigma_2 \tau^{j-1} (\eta_1)$,
so $\tau^{1-j} \sigma_2^{-1} \sigma_1 \tau^{i-1}$ fixes $\eta_1$.
As $\eta_1$ generates $\Omega_{n,\ell}^+$, we have $\tau^{1-j} \sigma_2^{-1} \sigma_1 \tau^{i-1}=1$
is the identity element in $\Gal(\Omega_{n,\ell}^+/\Q)$. However, 
$\Gal(\Omega_{n,\ell}^+/\Q)$
is abelian, so
\[
\tau^{i-j}=\sigma_1^{-1} \sigma_2 \in G_n \cap H_n=\{1\}.
\]
Since $1 \le i \le j \le \ell$ and $\tau$ has order $\ell$ we have $i=j$.
\end{proof}
The Galois group $G_n$ 
acts faithfully on $\cZ_n^+$. This action has $\ell^{n-1}$ orbits.
Assumption \eqref{eqn:k} ensures that the number of orbits is at least $k$. 
If $k>\ell$, then we \textbf{extend} the list $\eta_1,\dotsc,\eta_\ell \in \cZ_n^+$
to $\eta_1,\dotsc,\eta_k \in \cZ_n^+$, so that the $\eta_i$ continue to have
disjoint orbits under the action of $G_n$; if $\ell=3$ the choice
of $\eta_4$ will be important later, and we choose 
$\eta_4=\zeta^2+\zeta^{-2}$.
Consider the curve 
\begin{equation}\label{eqn:Dn}
D_n \; : \; Y^2=\prod_{j=1}^{k} \prod_{\sigma \in G_n} (X-\eta_j^\sigma).
\end{equation}
\begin{lem} \label{lem:hyperelliptic}
The curve $D_n$ is hyperelliptic of genus $g$,
is defined over $\Q_{n-1,\ell}$,
and has good reduction away from the primes above $2$
and $\ell$.
\end{lem}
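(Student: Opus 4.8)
The plan is to verify the three assertions of the lemma in turn; the only one requiring genuine work is the good-reduction statement, and I will reduce it to showing that the discriminant of the polynomial defining $D_n$ is a unit away from the primes above $\ell$.

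I first deal with the genus and the field of definition. Each $\eta_j$ lies in $\cZ_n^+$ and hence generates $\Omega_{n,\ell}^+$ over $\Q$, so a fortiori over $\Q_{n-1,\ell}$; thus its stabiliser in $G_n=\Gal(\Omega_{n,\ell}^+/\Q_{n-1,\ell})$ is trivial, the orbit $\{\eta_j^\sigma:\sigma\in G_n\}$ has exactly $\#G_n=(\ell-1)/2$ elements, and the monic polynomial $\prod_{\sigma\in G_n}(X-\eta_j^\sigma)$, being $G_n$-stable, has coefficients in $\Q_{n-1,\ell}$. Since the $k$ orbits are pairwise disjoint by construction, the monic polynomial $f(X)=\prod_{j=1}^{k}\prod_{\sigma\in G_n}(X-\eta_j^\sigma)\in\Q_{n-1,\ell}[X]$ is separable of degree $k(\ell-1)/2$, which by \eqref{eqn:gk} equals $2g+1$ or $2g+2$. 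Hence $D_n:Y^2=f(X)$ is a hyperelliptic curve of genus $g$ defined over $\Q_{n-1,\ell}$, and this holds however the list $\eta_1,\dots,\eta_\ell$ was extended to $\eta_1,\dots,\eta_k$.

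For good reduction, I will use that, since $f$ is monic of degree $2g+1$ or $2g+2$ and separable, the model $Y^2=f(X)$ together with its chart at infinity is smooth and proper over $\OO_\fp$ for every prime $\fp\nmid 2$ at which $\operatorname{disc}(f)$ is a unit; so it suffices to show that $\operatorname{disc}(f)$ is a unit at every prime of $\Q_{n-1,\ell}$ not lying above $\ell$, which I check after base change to $\Omega_{n,\ell}$. Every root of $f$ lies in $\cZ_n^+$, hence has the form $\beta+\beta^{-1}$ with $\beta\in\cZ_n$ a primitive $\ell^n$-th root of $1$, and for two distinct roots $\alpha+\alpha^{-1}$ and $\beta+\beta^{-1}$ the identity \eqref{eqn:alphabeta} gives
\[
  (\alpha+\alpha^{-1})-(\beta+\beta^{-1})=\alpha^{-1}(1-\alpha\beta)(1-\alpha\beta^{-1}).
\]
Distinctness of the two roots forces $\alpha\ne\beta^{\pm1}$, so $\alpha\beta$ and $\alpha\beta^{-1}$ are non-trivial $\ell^n$-th roots of $1$; writing such a root as $\zeta_{\ell^n}^s$ with $\ell^n\nmid s$, Lemma~\ref{lem:cycval1} shows that $(1-\zeta_{\ell^n}^s)\,\OO(\Omega_{n,\ell})=\lambda_n^{\ell^{\ord_\ell(s)}}$, which is supported only at the prime $\lambda_n\mid\ell$, while $\alpha^{-1}$ is a root of unity. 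Hence every difference of two distinct roots of $f$ is a unit of $\OO(\Omega_{n,\ell},\{\upsilon_\ell\})$, and as $\operatorname{disc}(f)$ is a product of squares of such differences, so is $\operatorname{disc}(f)$; descending to $\Q_{n-1,\ell}$, it has valuation zero at every prime not above $\ell$, so $D_n$ has good reduction away from the primes above $2$ and $\ell$.

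The main obstacle is the discriminant computation just carried out: the point is to recognise, using \eqref{eqn:alphabeta}, that differences of distinct roots of $f$ are units times cyclotomic numbers $1-\zeta_{\ell^n}^s$ whose ideal factorisation is concentrated above $\ell$ by Lemma~\ref{lem:cycval1} --- with the separability from the first step being exactly what guarantees these numbers are nonzero. The passage from ``unit discriminant away from $2\ell$'' to ``good reduction away from $2\ell$'' is, by contrast, a standard property of hyperelliptic models $Y^2=f(X)$ with $f$ monic and separable.
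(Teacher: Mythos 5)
Your proof is correct and follows essentially the same route as the paper: separability from the disjointness of the $G_n$-orbits, descent of the hyperelliptic polynomial to $\Q_{n-1,\ell}$ via $G_n$-stability of its roots, and the good-reduction claim via the identity \eqref{eqn:alphabeta} together with Lemma~\ref{lem:cycval1}, showing each difference of distinct roots, and hence the discriminant, is a $\{\upsilon_\ell\}$-unit. The only difference is that you spell out explicitly the standard fact that a monic separable model with discriminant a unit away from $2$ is smooth there, which the paper leaves implicit.
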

\begin{proof}
Our assumption on the orbits ensures that the polynomial on the right hand-side
	of \eqref{eqn:Dn}
is separable. By \eqref{eqn:gk}, the degree of the polynomial is either $2g+1$ or $2g+2$.
Thus $D_n$ is a hyperelliptic curve of genus $g$.
A priori, $D_n$ is defined over $\Omega_{n,\ell}^+$. However, 
the roots of the hyperelliptic
polynomial are permuted by the action 
	of $G_n=\Gal(\Omega_{n,\ell}^+/\Q_{n-1,\ell})$
and so the polynomial belongs to $\Q_{n-1,\ell}[X]$.
Hence $D_n$ is defined over $\Q_{n-1,\ell}$.

Let $u_1,\dotsc,u_{d}$ be the roots of the hyperelliptic polynomial.
Then the discriminant of hyperelliptic polynomial is 
\[
\prod_{1 \le i < j \le d} (u_i-u_j)^2.
\]
However, $u_i$, $u_j$ are distinct elements of $\cZ_n^+$. Thus 
there are $\alpha$, $\beta \in \cZ_n$
with $\alpha \ne \beta$, $\beta^{-1}$ such that $u_i=\alpha+\alpha^{-1}$, $u_j=\beta+\beta^{-1}$.
	From the identity \eqref{eqn:alphabeta},
\[
u_i-u_j 
	\; = \;
	\alpha^{-1}(1-\alpha \beta^{-1})(1-\alpha \beta).
\]
Since $\alpha\beta$ and $\alpha \beta^{-1}$ are non-trivial $\ell$-power roots of $1$,
we see that $u_i-u_j$ is a $\{\upsilon_\ell\}$-unit, and hence  the discriminant
of the hyperelliptic polynomial of $D_n$ is a $\{\upsilon_\ell\}$-unit. 
\end{proof}

Given four pairwise 
distinct elements $z_1$, $z_2$, $z_3$, $z_4$ of a field $K$, we shall 
employ the notation $(z_1,z_2 \, ;\, z_3,z_4)$ to denote the
\textbf{cross ratio}
\[
	(z_1,z_2 \, ;\, z_3,z_4) \; = \; \frac{(z_1-z_3)(z_2-z_4)}{(z_1-z_4)(z_2-z_3)}.
\]
We extend the cross ratio to four distinct elements $z_1,z_2,z_3,z_4$ 
of $\PP^1(K)$ in the usual way. We let $\GL_2(K)$ act on $\PP^1(K)$
via fractional linear transformations
\[
	\gamma(z)=\frac{az+b}{cz+d}, \qquad 
	\gamma=\begin{pmatrix} a & b\\ c & d\end{pmatrix}.
\]
It is well-known and easy to check that these fractional
linear transformations leave the cross ratio unchanged:
\[
	(\gamma(z_1),\gamma(z_2) \, ;\, \gamma(z_3),\gamma(z_4))
	\; = \; (z_1,z_2 \, ;\, z_3,z_4).
\]
\begin{lem}\label{lem:cross}
Let $\overline{K}$ be an algebraically closed field
of characteristic $0$. Let
\[
		D \; : \; Y^2=\prod_{i=1}^{d} (X-a_i), 
		\qquad D^\prime \; : \; Y^2=\prod_{i=1}^{d} (X-b_i),
\]
be genus $g$ curves defined over $\overline{K}$ where the polynomials
on the right are separable. If $D$, $D^\prime$
are isomorphic then there is some permutation
$\mu \in S_{d}$ such that for all 
quadruples of pairwise distinct indices
$1 \le r,s,t,u \le d$
\[
		(a_{r} , a_{s} \, ;\,  a_{t}, a_{u} ) \;
		= \;
		(b_{\mu(r)},b_{\mu(s)}\, ;\, b_{\mu(t)}, b_{\mu(u)}).
\]
\end{lem}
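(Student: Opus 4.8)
The plan is to use the classical fact that a hyperelliptic curve of genus $g \ge 2$ determines its branch locus in $\PP^1$ up to the action of $\PGL_2(\overline{K})$. More precisely, for a genus $g$ curve $D : Y^2 = f(X)$ with $f$ separable of degree $d \in \{2g+1, 2g+2\}$, the branch points of the degree-$2$ map $D \to \PP^1$ are exactly the $d$ roots of $f$ (together with $\infty$ if $d = 2g+1$), i.e. a set of $2g+2$ points of $\PP^1(\overline{K})$ when we homogenize appropriately; and since $g \ge 2$ this map is the unique $g^1_2$, hence canonical, so any isomorphism $D \xrightarrow{\sim} D'$ descends to an automorphism of $\PP^1$ carrying the one branch locus onto the other. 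This last statement is Proposition~III.7.something-style material, but more directly it follows because the hyperelliptic involution is central in $\Aut(D)$ (as $g\ge 2$), so it is preserved by any isomorphism, and therefore the isomorphism is compatible with the quotient maps to $\PP^1$.

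Concretely, I would argue as follows. Suppose $\phi : D \to D'$ is an isomorphism over $\overline{K}$. Since $g \ge 2$, each of $D$, $D'$ has a unique hyperelliptic involution $\iota$, $\iota'$, and $\phi \iota \phi^{-1} = \iota'$ because $\iota$ is the unique central element of order $2$ fixing the right number of points (equivalently, $D/\iota \cong \PP^1$ is forced by $g \ge 2$). Hence $\phi$ induces an isomorphism $\overline{\phi} : \PP^1 = D/\iota \to D'/\iota' = \PP^1$, i.e. $\overline\phi = \gamma$ for some $\gamma \in \PGL_2(\overline K)$, and $\gamma$ carries the branch locus of $D \to \PP^1$ to that of $D' \to \PP^1$. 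Writing the branch locus of $D$ as $\{a_1, \dots, a_d\}$ (union $\{\infty\}$ if $d$ is odd) and that of $D'$ as $\{b_1, \dots, b_d\}$ (union $\{\infty\}$ if $d$ is odd), we obtain a bijection between these two $(2g+2)$-element subsets of $\PP^1(\overline K)$ realized by $\gamma$; restricting attention to the finite roots indexed by $1, \dots, d$ and composing the bijection on indices gives a permutation $\mu \in S_d$ with $\gamma(a_i) = b_{\mu(i)}$ for all $i$. Finally, since fractional linear transformations preserve the cross ratio, for any four pairwise distinct indices $r, s, t, u$ we get
\[
	(a_r, a_s \,;\, a_t, a_u) = (\gamma(a_r), \gamma(a_s) \,;\, \gamma(a_t), \gamma(a_u)) = (b_{\mu(r)}, b_{\mu(s)} \,;\, b_{\mu(t)}, b_{\mu(u)}),
\]
which is the claim.

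The main obstacle — really the only nontrivial point — is the justification that an isomorphism of hyperelliptic curves of genus $\ge 2$ descends to $\PGL_2$ on the base $\PP^1$, i.e. that the branch locus is an isomorphism invariant up to $\PGL_2$. I would handle this via uniqueness of the hyperelliptic involution: for $g \ge 2$ the linear system $|K_D|$ is the unique $g^{r}_{d}$ computing the gonality $2$, so the double cover $D \to \PP^1$ is canonical and any isomorphism is equivariant for it. A mild bookkeeping subtlety is the role of $\infty$ when $d = 2g+1$: in that case $\infty$ is a genuine branch point and $\gamma$ may or may not send $\infty$ to $\infty$, but this does not affect the conclusion, since the cross-ratio identity is only asserted for indices among $1, \dots, d$ and the permutation $\mu$ is obtained from the induced bijection on the full branch locus by discarding whatever happens at $\infty$ (one just needs that $\gamma$ maps the $d$ finite branch points of $D$ bijectively onto the $d$ finite branch points of $D'$, which holds whenever $\gamma$ fixes $\infty$, and otherwise one first post-composes with a translation—but in fact for the cross-ratio conclusion the precise matching is irrelevant, as any $\gamma \in \PGL_2(\overline K)$ carrying $\{a_i\} \cup \{\infty\}^{?}$ to $\{b_i\}\cup\{\infty\}^{?}$ preserves all cross ratios). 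I would phrase the write-up to avoid dwelling on this: fix the bijection of branch loci induced by $\gamma$, restrict to the indices $1,\dots,d$ to define $\mu$ (when $d$ is even this is automatic; when $d$ is odd one checks $\gamma$ can be chosen to fix $\infty$, or simply notes the cross ratio of any four finite points is preserved regardless), and invoke invariance of the cross ratio.
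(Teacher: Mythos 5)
Your overall route is essentially the paper's: the paper quotes the standard normal form for isomorphisms of hyperelliptic curves (every isomorphism is fractional linear in $X$, with $Y \mapsto eY/(cX+d)^{g+1}$), observes that the induced fractional linear transformation $\gamma$ must take roots to roots, and concludes by invariance of the cross ratio. You instead derive the descent to a fractional linear transformation from the uniqueness and centrality of the hyperelliptic involution for $g \ge 2$; that is a correct, self-contained substitute for the citation, and in the even-degree case $d=2g+2$ your argument is complete, since then $\infty$ is not a branch point of either model and $\gamma$ necessarily matches the two sets of finite roots.

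The gap is your treatment of the odd case $d=2g+1$, where $\infty$ is a branch point of both models: both fixes you sketch fail. First, $\gamma$ cannot in general be chosen to fix $\infty$. Second, although $\gamma$ preserves cross ratios, this does not yield the stated conclusion when $\gamma(a_{i_0})=\infty$ for some $i_0$: the asserted identities for all quadruples under a single $\mu$ are equivalent (the cross ratio being injective in its fourth entry) to the existence of a fractional linear transformation carrying each $a_i$ to $b_{\mu(i)}$, i.e.\ to projective equivalence of the two sets of \emph{finite} roots, which is strictly stronger than projective equivalence of the full branch loci. Indeed the statement fails as written for odd $d$: the genus $2$ curves $D:Y^2=X(X-1)(X-2)(X-3)(X-4)$ and $D':Y^2=X(X-1)(X-\tfrac12)(X-\tfrac13)(X-\tfrac14)$ are isomorphic over $\overline{\Q}$ via $(X,Y)\mapsto\bigl(1/X,\;Y/(\sqrt{24}\,X^3)\bigr)$, yet $\{0,\tfrac12,\tfrac13,\tfrac14\}$ is a harmonic quadruple (cross ratio $2$, orbit $\{2,\tfrac12,-1\}$), while every quadruple from $\{0,1,2,3,4\}$ has cross ratio in the orbit of $4$, of $3$, or of $9$; hence no $\mu\in S_5$ as in the statement exists. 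To be fair, the paper's own proof has the same blind spot, since it tacitly assumes each $\pi(a_i,0)$ is an affine point of $D'$, i.e.\ $ca_i+d\neq 0$. The statement should either be restricted to $d=2g+2$ or weakened so that the matched points are allowed to lie in $\{b_1,\dotsc,b_d,\infty\}$ (equivalently, one compares the full branch loci); the weaker version still suffices for the application in Lemma~\ref{lem:noniso}, because a cross ratio involving $\infty$ degenerates to a ratio of differences of roots, which again lies in $V_m$.
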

\begin{proof}
We shall make use of the following standard
description
(e.g.\ \cite[Proposition 6.11]{Bakergenus2}) 
of isomorphisms of hyperelliptic curves:
every isomorphism $\pi \; : \; D \rightarrow D^\prime$
is of the form
\[
		\pi(X,Y) \; = \;
		\left(\frac{aX+b}{cX+d} , \frac{eY}{(cX+d)^{g+1}} \right)
\]
for some
\[
		\gamma=\begin{pmatrix} a & b\\ c & d\end{pmatrix} \in \GL_2(\overline{K}),
		\qquad e \in \overline{K}^\times.
\]
Observe that $\pi(a_i,0)$ has $Y$-coordinate $0$;
thus
\[
		\{ \gamma(a_1),\dotsc,\gamma(a_{d})\}
		\; = \; \{ b_1,\dotsc,b_{d}\}.
\]
Hence there is a permutation $\mu \in S_{d}$ such that
$\gamma(a_i)=b_{\mu(i)}$. The lemma follows
from the invariance of the cross ratio under
the action of $\GL_2(\overline{K})$.
\end{proof}

\begin{lem}\label{lem:count}
Let $\ell \ge 11$ be prime. Then there is some $a \in \Z_{\ell}^\times$
of order $\ell-1$ such that
        \begin{multline}\label{eqn:notequiv}
        1+a^2 \; \not\equiv \;   0, \; \pm (1-a^2),\, \pm (a+a^3),\,
        \pm (a-a^3),\\
         \pm (1+a^3),\, \pm (1-a^3),\, \pm (a+a^2),\,
        \pm (a-a^2)  \pmod{\ell}.
\end{multline}
\end{lem}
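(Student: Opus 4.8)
The plan is to produce a primitive root $\bar a$ of $\F_\ell$ whose residue avoids the fifteen forbidden values on the right of \eqref{eqn:notequiv}, and then take $a\in\Z_\ell^\times$ to be its Teichm\"uller representative. Since the group of $(\ell-1)$-st roots of unity in $\Z_\ell^\times$ maps isomorphically onto $\F_\ell^\times$ under reduction modulo $\ell$, such an $a$ has order $\ell-1$, and as \eqref{eqn:notequiv} involves only congruences modulo $\ell$, this $a$ satisfies all the required conditions. So the problem reduces entirely to a statement about $\F_\ell$.

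First I would rewrite each forbidden congruence $1+a^2\equiv v(a)\pmod\ell$ in the form $P_v(a)\equiv0\pmod\ell$ for an explicit $P_v\in\Z[X]$ of degree $\le3$. Nine of these fifteen polynomials are degenerate, in the sense that over any field of characteristic $\ne2$ every root lies in $\{0,1,-1\}$ or satisfies $a^2=-1$: for instance $1+a^2\equiv a+a^3$ becomes $(a-1)(a^2+1)\equiv0$, the pair $1+a^2\equiv\pm(1-a^2)$ becomes $2a^2\equiv0$ and $2\equiv0$, and $1+a^2\equiv1+a^3$ becomes $a^2(a-1)\equiv0$; the other degenerate cases are similar. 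Since $\ell\ge11$, a primitive root of $\F_\ell$ has order $\ell-1\ge10$, hence is none of $0$, $\pm1$, or a square root of $-1$, so all nine of these congruences hold automatically. The remaining six congruences reduce to four cubic equations with leading coefficient $1$ and two quadratic equations with leading coefficient $2$; as $\ell\ge11$, none of these six polynomials vanishes modulo $\ell$, so together they have at most $4\cdot3+2\cdot2=16$ roots in $\F_\ell$. Consequently at most $16$ primitive roots of $\F_\ell$ fail \eqref{eqn:notequiv}, and the lemma follows whenever $\varphi(\ell-1)>16$.

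It remains to treat the primes with $\varphi(\ell-1)\le16$. Since $\varphi(m)\le16$ forces $m\le60$ (the largest such $m$ is $60$, as one checks directly), there are only finitely many such $\ell$; explicitly, they are
\[
\ell\in\{11,13,17,19,23,29,31,37,41,43,61\},
\]
since these are precisely the primes $\ell\ge11$ for which $\ell-1$ lies in the finite set $\{m:\varphi(m)\le16\}$. For each of these I would simply exhibit, after a short search, an explicit primitive root satisfying \eqref{eqn:notequiv}; for example $a\equiv6\pmod{11}$ works (one verifies $1+6^2\equiv4$ avoids all fifteen forbidden residues).

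The main obstacle is really the bookkeeping in the middle step: one must correctly identify the nine degenerate congruences, confirm that the four surviving cubics genuinely have degree $3$ modulo $\ell$, and then carry out the explicit verification for each prime on the finite list — all routine but error-prone. A cruder variant that skips the degeneracy analysis, bounding the number of excluded residues by $15\cdot3$, also works, at the cost of a substantially longer finite check.
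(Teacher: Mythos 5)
Your proposal is correct and follows essentially the same route as the paper: bound the number of residues $a \bmod \ell$ violating \eqref{eqn:notequiv} by counting roots of the associated polynomials of degree $\le 3$, note that the elements of order $\ell-1$ in $\Z_\ell^\times$ are exactly the $\varphi(\ell-1)$ Hensel (Teichm\"uller) lifts of primitive roots of $\F_\ell$, and settle the remaining small primes by direct search. Your version only sharpens the constants — the degeneracy analysis gives at most $16$ bad residues instead of the paper's crude bound of $37$, and the exact determination of the $m$ with $\varphi(m)\le 16$ replaces the paper's appeal to Shapiro's bound $\varphi(n)>n^{\log 2/\log 3}$ — which shrinks the residual finite verification from all primes $11\le \ell\le 317$ to your list of eleven primes, of which you have explicitly checked only $\ell=11$; the remaining ten require the same kind of routine finite check that the paper delegates to a computer.
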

\begin{proof}
Making use of
the fact that a polynomial of degree $n$ has
at most $n$ roots, we see that the number
of $a \in \F_\ell$ that \textbf{do not satisfy} \eqref{eqn:notequiv}
is (very crudely) bounded by $37$.
An element $a \in \Z_{\ell}^\times$ of order $\ell-1$
is the unique Hensel lift of an element
$a \in \F_\ell^\times$
of order $\ell-1$. There are precisely $\varphi(\ell-1)$
elements of order $\ell-1$ in $\F_\ell^\times$.
A theorem of Shapiro \cite[page 23]{Shapiro},
asserts that $\varphi(n) > n^{\log{2}/\log{3}}$ for $n \ge 30$.
We note that if $\ell \ge 317$ then
$\varphi(\ell-1) \ge 316^{\log{2}/\log{3}} \approx 37.8$,
and so the lemma holds for $\ell  \ge 317$.
For the range $11 \le \ell \le 317$ we checked
the lemma by brute force computer enumeration.
\end{proof}

\begin{lem}\label{lem:noniso}
Let $n>m$ be sufficiently large.
Then $D_n$ and $D_m$ are non-isomorphic, even over $\overline{\Q}$.
\end{lem}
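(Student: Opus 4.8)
The plan is to combine Lemma~\ref{lem:cross} with the $\upsilon_\ell$-adic valuations furnished by Lemma~\ref{lem:cycval1} and the identity \eqref{eqn:alphabeta}. The starting observation: the roots of the hyperelliptic polynomial of $D_n$ are the $\eta_j^\sigma$ ($1\le j\le k$, $\sigma\in G_n$), they lie in $\Omega_{n,\ell}^+$, and any one of them generates $\Omega_{n,\ell}^+$ over $\Q$; similarly the roots of $D_m$ generate $\Omega_{m,\ell}^+$, and $\Omega_{m,\ell}^+\subseteq\Omega_{n-1,\ell}^+$ since $m\le n-1$. If $D_n\cong D_m$ over $\overline{\Q}$, then by Lemma~\ref{lem:cross} every cross ratio of four roots of $D_n$ equals a cross ratio of four roots of $D_m$, hence lies in $\Omega_{m,\ell}^+\subseteq\Omega_{n-1,\ell}^+$. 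So it suffices to produce four roots of $D_n$ whose cross ratio does not lie in $\Omega_{n-1,\ell}^+$.

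For the principal case, assume $\ell\ge 5$ and $k\ge 2$ (so $D_n$ has roots in at least two distinct $G_n$-orbits). Choose $\sigma\in G_n$, $\sigma\ne 1$, and write $\sigma(\zeta_{\ell^n})=\zeta_{\ell^n}^c$ where $c$ is a Teichm\"uller unit with $c\not\equiv\pm1\pmod{\ell}$ (such $c$ exists because $G_n$ has order $(\ell-1)/2\ge 2$). Then $\eta_1,\eta_2,\eta_1^\sigma,\eta_2^\sigma$ are four distinct roots of $D_n$. Writing $\zeta=\zeta_{\ell^n}$, \eqref{eqn:alphabeta} expresses each difference as $\zeta^{-a}(1-\zeta^{a+b})(1-\zeta^{a-b})$; Lemma~\ref{lem:cycval1} then gives $\upsilon_\ell(\eta_1-\eta_2)=\upsilon_\ell(\eta_1^\sigma-\eta_2^\sigma)=\frac{1+\ell^{n-1}}{\ell^{n-1}(\ell-1)}$ (the exponents $1$ and $1+\ell^{n-1}$ differ by $\ell^{n-1}$), whereas $\upsilon_\ell(\eta_1-\eta_2^\sigma)=\upsilon_\ell(\eta_1^\sigma-\eta_2)=\frac{2}{\ell^{n-1}(\ell-1)}$ (all relevant exponent sums and differences are prime to $\ell$, using $c\not\equiv\pm1$). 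Hence the cross ratio $\lambda=(\eta_1,\eta_1^\sigma;\eta_2,\eta_2^\sigma)$ has $\upsilon_\ell(\lambda)=\frac{2(\ell^{n-1}-1)}{\ell^{n-1}(\ell-1)}$, whose denominator in lowest terms has $\ell$-part exactly $\ell^{n-1}$. Since $\ell$ is totally ramified in $\Omega_{n-1,\ell}^+$ with ramification index only $\varphi(\ell^{n-1})/2=\ell^{n-2}(\ell-1)/2$, no element of $\Omega_{n-1,\ell}^+$ has such a valuation; so $\lambda\notin\Omega_{n-1,\ell}^+$, contradiction.

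The remaining cases are $\ell=3$ (where $G_n=1$ and the roots are simply $\eta_1,\dots,\eta_k\in\cZ_n^+$) and $k=1$ (where all roots lie in a single $G_n$-orbit, which forces $\ell\ge 11$). Here every cross ratio of four roots is a $\upsilon_\ell$-unit, so the valuation argument collapses and one must instead show that a suitable cross ratio $\lambda$ is moved by the generator $\tau$ of $\Gal(\Omega_{n,\ell}^+/\Omega_{n-1,\ell}^+)$ (of order $\ell$, $\tau(\zeta)=\zeta^{1+\ell^{n-1}}$), using that $\lambda\in\Omega_{n-1,\ell}^+\iff\tau(\lambda)=\lambda$. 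For $\ell=3$ take $\lambda=(\eta_1,\eta_2;\eta_3,\eta_4)$ with $\eta_4=\zeta^2+\zeta^{-2}$ — this is precisely the reason for that choice. Repeated use of \eqref{eqn:alphabeta} together with $(1-\omega^2)/(1-\omega)=-\omega^2$ for $\omega=\zeta^{3^{n-1}}$ a primitive cube root of $1$ reduces $\tau(\lambda)/\lambda$ to
\[
\frac{\tau(\lambda)}{\lambda}\;=\;\frac{(1-\zeta^{2+3^{n-1}})(1-\zeta)^2(1-\zeta^2)}{(1-\zeta^{1+3^{n-1}})(1-\zeta^{2+2\cdot3^{n-1}})^2(1-\zeta^{1-3^{n-1}})},
\]
and under the embedding $\zeta\mapsto e^{2\pi i/3^n}$ the right-hand side has absolute value $O(3^{-3n})$ (the numerator carries two factors $1-\zeta,\,1-\zeta^2$ of size $O(3^{-n})$, the remaining factors being bounded away from $0$ and $\infty$), hence is $\ne 1$ for $n$ large; since $\Omega_{n-1,3}^+$ is totally real, $\tau(\lambda)=\lambda$ would force this ratio to be $1$, so $\lambda\notin\Omega_{n-1,3}^+$. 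The case $k=1$ is handled in the same spirit, writing $\tau(\lambda)/\lambda$ as a product of factors $\frac{1-\zeta^{eq}}{1-\zeta^e}$ (with $q=1+\ell^{n-1}$) and checking it is not $1$. I expect these last two cases — carrying out the explicit cyclotomic identities and the size estimate — to be the main obstacle, the valuation argument being essentially formal; note too that "sufficiently large" is genuinely needed, e.g. for $\ell=3$ the cross ratio above is $\tau$-fixed when $n=2$.
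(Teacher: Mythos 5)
Your reduction via Lemma~\ref{lem:cross} is the same as the paper's, and two of your three cases do work. For $k\ge 2$, $\ell\ge 5$ your valuation argument is correct and is a genuinely different mechanism from the paper's: the cross ratio $(\eta_1,\eta_1^\sigma;\eta_2,\eta_2^\sigma)$ has $\upsilon_\ell$-valuation $\frac{2(\ell^{n-1}-1)}{\ell^{n-1}(\ell-1)}$, which indeed lies outside the value group $\frac{2}{\ell^{n-2}(\ell-1)}\Z$ of $\Omega_{n-1,\ell}^+\supseteq\Omega_{m,\ell}^+$, whereas the paper shows instead that the cross ratio lies in $V_n$ but not in $\langle\pm\zeta_{\ell^n},V_{n-1}\rangle$ via Lemma~\ref{lem:quo2}; your route is more elementary (and needs no "sufficiently large $n$" in this case). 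Your $\ell=3$ computation also checks out: the displayed formula for $\tau(\lambda)/\lambda$ is correct up to the harmless substitution $1-\zeta^{-s}=-\zeta^{-s}(1-\zeta^s)$, and the archimedean estimate does force $\tau(\lambda)\ne\lambda$ for $n$ large.

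The genuine gap is the case $k=1$ (i.e.\ $2g+1=(\ell-1)/2$, so $\ell\ge 11$), which you leave as "handled in the same spirit" and yourself flag as the main obstacle; note this is exactly the case needed for Theorem~\ref{thm:Shafarevich3}. Two concrete problems. First, you impose no condition on $a$ beyond having order $\ell-1$; without a constraint such as \eqref{eqn:notequiv} the eight exponents $1\pm a^2$, $a\pm a^3$, $1\pm a^3$, $a\pm a^2$ can collide modulo $\ell$ (or be $\equiv 0$), and then factors of $\tau(\lambda)/\lambda$ can cancel identically -- the paper needs Lemma~\ref{lem:count} precisely at this point. Second, the archimedean strategy that saved you for $\ell=3$ relied on the exponents $1,2,3$ being literally small, so that three numerator factors are $O(3^{-n})$ while the remaining factors stay bounded away from $0$ and $\infty$; for $k=1$ the relevant exponents are the residues modulo $\ell^n$ of $1\pm a^2$, etc., where $a$ is a Teichm\"uller unit, and these residues are of size comparable to $\ell^n$ and vary erratically with $n$, so no factor is forced to be small or large and there is no evident uniform reason why the product of the eight ratios avoids $1$ for all large $n$. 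The paper sidesteps this entirely by an algebraic argument: cross ratios of roots of $D_m$ lie in $V_m\subseteq V_{n-1}$, and Lemma~\ref{lem:quo2}, applied with $a$ chosen by Lemma~\ref{lem:count}, shows the chosen cross ratio of roots of $D_n$ is not in $\langle\pm\zeta_{\ell^n},V_{n-1}\rangle$. You would need either to import that argument (your $\tau$-fixedness criterion detects only the top layer $\Omega_{n-1,\ell}^+$, so the $V_{n-1}$-machinery is the natural replacement) or to supply a genuinely new estimate; as written, the $k=1$ case is unproved.
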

\begin{proof}
Note that all roots of the hyperelliptic polynomial for $D_n$
in \eqref{eqn:Dn} belong to $\cZ_n^+$. It follows
from \eqref{eqn:alphabeta} that the cross ratio of
any four of them belongs to $V_n$.
Suppose $D_n$ and $D_m$ are isomorphic.
Let $u_1,u_2,u_3,u_4$ be any distinct roots of the hyperelliptic
polynomial for $D_n$ given in \eqref{eqn:Dn}. Then,
by Lemma~\ref{lem:cross},
\[
        (u_1,u_2 \, ; \, u_3, u_4) \; \in V_m \subseteq V_{n-1}.
\]
We shall obtain a contradiction through a careful
choice of the four roots $u_1,\dotsc,u_4$.

We first suppose that $k \ge 2$ and $\ell \ge 5$.
Let $\zeta=\zeta_{\ell^n}$
and $b=1+\ell^{n-1}$. Then, by Lemma~\ref{lem:etadef},
$\eta_1=\zeta+\zeta^{-1}$ and $\eta_2=\zeta^b+\zeta^{-b}$.
Let $a \in \Z_{\ell}^\times$ have order $\ell-1$.
Let $\kappa \in \Gal(\Omega_{n,\ell}/\Q_{n-1,\ell})$
be given by $\kappa(\zeta)=\zeta^a$.
Then $\kappa$ is a cyclic generator
for $\Gal(\Omega_{n,\ell}/\Q_{n-1,\ell})$.
We shall denote the restriction of $\kappa$ to
$\Omega_{n,\ell}^+$ by $\mu$. Then $\mu$ is a cyclic generator
for $G_n=\Gal(\Omega_{n,\ell}^+/\Q_{n-1,\ell})$
having order $(\ell-1)/2$. We shall take
\[
    \begin{aligned}
        u_1 &= \eta_1 = \zeta + \zeta^{-1}, & \quad u_2 &= \mu(\eta_1) = \zeta^a + \zeta^{-a}, \\
        \quad u_3 &= \eta_2 = \zeta^b + \zeta^{-b}, & \quad u_4 &= \mu(\eta_2) = \zeta^{ab} + \zeta^{-ab}.
    \end{aligned}
\]
We compute the cross ratio
with the help of identity \eqref{eqn:alphabeta},
finding
\begin{equation*}
        (u_1, u_2 \, ; \, u_3, u_4) \; = \; 
\frac{(1-\zeta^{1+b})(1-\zeta^{1-b})(1-\zeta^{a+ab})(1-\zeta^{a-ab})}{(1-\zeta^{1+ab})(1-\zeta^{1-ab})(1-\zeta^{a+b})(1-\zeta^{a-b})}.
\end{equation*}
As $b \equiv 1 \pmod{\baseprime}$, and clearly $a \not \equiv \pm 1
\pmod{\baseprime}$, it is easy to check that $1+b$ is the only one out
of the eight exponents of $\zeta$ above that is 
$\pm 2 \pmod{\baseprime}$.
Therefore by Lemma \ref{lem:quo2}, the cross ratio is not an element of
$\langle \pm \zeta_{\baseprime^n}, V_{n-1} \rangle$ for $n$
sufficiently large, giving a contradiction
for the case $k \ge 2$ and $\ell \ge 5$.

Next we suppose that $k=1$. It follows from \eqref{eqn:gk}
that $\ell \ge 11$.
We choose $a \in \Z_\ell^\times$ as in Lemma~\ref{lem:count},
and, as above, take $\mu$ to be the corresponding
generator of $G_n$ of order $(\ell-1)/2 \ge 5$.
We take
\[
        u_i=\mu^{i-1}(\eta_1)=\zeta^{a^{i-1}}+\zeta^{-a^{i-1}},
        \qquad 1 \le i \le 4;
\]
observe that these are four roots of the hyperelliptic polynomial
of $D_n$ given in \eqref{eqn:Dn}.
The assumption that $\ell \ge 11$ ensures
that $a$ has order $\ge 10$ and so
$u_1,u_2,u_3,u_4$ are indeed pairwise distinct.
We compute the cross ratio
with the help of identity \eqref{eqn:alphabeta},
finding
\[
        (u_1,u_2 \, ; \, u_3,u_4) \; = \;
        \frac{(1-\zeta^{1+a^2})(1-\zeta^{1-a^2})(1-\zeta^{a+a^3})(1-\zeta^{a-a^3})}{(1-\zeta^{1+a^3})(1-\zeta^{1-a^3})(1-\zeta^{a+a^2})(1-\zeta^{a-a^2})}.
\]
Using Lemma~\ref{lem:quo} and our choice of $a$ given by Lemma~\ref{lem:count}
we conclude that this cross ratio does not belong
to $\langle \pm \zeta_{\ell^n},V_{n-1}\rangle$
for $n$ sufficiently large. This gives 
a contradiction for the case $k=1$.

Finally, we consider $\baseprime = 3$. 
It follows from \eqref{eqn:gk} that $k \ge 5$. 
Recall our choices of $\eta_1$, $\eta_2$, $\eta_3$
in Lemma~\ref{lem:etadef}, and our choice of $\eta_4=\zeta^2+\zeta^{-2}$
in the particular case $\ell=3$.
We choose the four roots $u_i=\eta_i$ for $i=1,\dotsc,4$,
and obtain,
\[
        (u_1,u_2 \, ; \, u_3,u_4) \; = \;
	\frac{(1-\zeta^{2+2 \times 3^{n-1}})(1-\zeta^{-2 \times 3^{n-1}})(1-\zeta^{3+3^{n-1}})(1-\zeta^{-1+3^{n-1}})}{(1-\zeta^3)(1-\zeta^{-1})(1-\zeta^2)(1-\zeta^{-3^{n-1}})}.
\]
As before, with the help of Lemma~\ref{lem:quo2},
we easily verify that the cross ratio is not an element of 
$\langle \pm \zeta_{\baseprime^n}, V_{n-1} \rangle$ for $n$ sufficiently large. This completes the proof. 
\end{proof}

\subsection*{Proof of Theorem~\ref{thm:Shafarevich2}}  
If $\baseprime = 3$ or $5$ then
\eqref{eqn:genus} does not impose any restriction on the genus.
 Therefore we obtain, as above,
for every genus $g \ge 2$, infinitely many $\overline{\Q}$-isomorphism
classes of genus $g$ hyperelliptic curves, defined
over $\Q_{\infty,\ell}$, with good reduction
away from $\{\upsilon_2,\upsilon_\ell\}$.

It remains to deal with $\ell=7$, $11$ and $13$.
Here, \eqref{eqn:genus} imposes the restriction
\[
	g \equiv \begin{cases}
		\text{$1$ or $2 \bmod{3}$} & \text{if $\ell=7$}\\
		\text{$2$ or $4 \bmod{5}$} & \text{if $\ell=11$}\\
		\text{$2 \bmod{3}$} & \text{if $\ell=13$}.
	\end{cases}
\]
We very briefly sketch how to remove the restriction.
Instead of $D_n$ defined as in \eqref{eqn:Dn},
we consider the more general
\[
	D_n \; : \; Y^2=h(X) \cdot \prod_{j=1}^{k} \prod_{\sigma \in G_n} (X-\eta_j^\sigma)
\]
where 
\begin{itemize}
\item $h$ is a monic divisor of $X(X-1)(X+1)$; 
\item $k$ and $h$
are chosen to obtain the desired genus; 
\item $\eta_j \in \cZ_n^+$ are chosen as before. 
\end{itemize}
These
$D_n$ are clearly defined over $\Q_{n-1,\ell}$.
To check that they have good reduction away from
$S^\prime=\{\upsilon_2,\upsilon_\ell\}$,  we need
to verify that
the difference of any two distinct roots $u$, $v$
of the hyperelliptic polynomial belongs to
$\OO(\Omega_n,S^\prime)^\times$. 
The proof of Lemma~\ref{lem:hyperelliptic}
shows this if $u$, $v \in \cZ_n^+$.
For the remaining possible differences 
it is enough to note that
\[
	\alpha+\alpha^{-1}=\alpha^{-1} \Phi_4(\alpha), \qquad	\alpha+\alpha^{-1} +1 = \alpha^{-1} \Phi_3(\alpha),
	\qquad 
	\alpha+\alpha^{-1}-1=\alpha^{-1} \Phi_6(\alpha)
\]
which are all units by Lemma~\ref{lem:cycpol}.
We omit the remaining details.
\qed

\section{Isogeny classes of hyperelliptic curves over $\Q_{\infty,\ell}$}\label{sec:isog}
A beautiful theorem of Kummer asserts that the index of the cyclotomic units
$C_n$ in the full unit group $\OO(\Omega_{n,\ell})^\times$ equals the class number $h_n^+$ of $\Omega_{n,\ell}^+$.
In this section, with the help of Kummer's theorem, we prove for certain primes $\ell$ the existence of infinitely many
isogeny classes of hyperelliptic Jacobians over $\Q_{\infty,\ell}$ with good reduction away from $\ell$.
We first prove a few elementary lemmas.
\begin{lem} \label{lem:fieldsquareroots}
 Let $K$ be a field of characteristic not $2$, and let $L = K(\sqrt{\alpha_1},
\dots, \sqrt{\alpha_r})$ where $\alpha_i \in K^\times$. Then for any $x
\in K$ such that $\sqrt{x} \in L$, we have 
\begin{equation*}
    x \; =\; \alpha_1^{e_1} \cdots \alpha_r^{e_r} q^2
\end{equation*}
for some integers $e_i \in \mathbb{Z}$ and $q \in K$.
\end{lem}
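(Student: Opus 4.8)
The plan is to proceed by induction on $r$, the number of adjoined square roots. The base case $r=0$ is trivial: if $\sqrt{x} \in K$ then $x = q^2$ with $q = \sqrt{x} \in K$. For the inductive step, write $L = L'(\sqrt{\alpha_r})$ where $L' = K(\sqrt{\alpha_1}, \dots, \sqrt{\alpha_{r-1}})$. There are two cases depending on whether $[L:L']$ equals $1$ or $2$. If $\sqrt{\alpha_r} \in L'$ already, then $L = L'$ and we simply apply the inductive hypothesis to $x \in K \subseteq L'$, absorbing nothing extra. So assume $[L:L'] = 2$, so that $\{1, \sqrt{\alpha_r}\}$ is an $L'$-basis of $L$.

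Now suppose $\sqrt{x} \in L$ for some $x \in K^\times$ (the case $x = 0$ being trivial). Write $\sqrt{x} = u + v\sqrt{\alpha_r}$ with $u, v \in L'$. Squaring gives $x = u^2 + v^2 \alpha_r + 2uv\sqrt{\alpha_r}$, and comparing coefficients in the basis $\{1, \sqrt{\alpha_r}\}$ yields $2uv = 0$ and $x = u^2 + v^2\alpha_r$. Since $\Char K \neq 2$, either $u = 0$ or $v = 0$. If $v = 0$, then $\sqrt{x} = u \in L'$, so by the inductive hypothesis $x = \alpha_1^{e_1} \cdots \alpha_{r-1}^{e_{r-1}} q^2$ with $q \in K$, and we are done (taking $e_r = 0$). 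If $u = 0$, then $x = v^2 \alpha_r$, so $\alpha_r = x/v^2$, hence $\sqrt{\alpha_r \cdot x} = \sqrt{\alpha_r} \cdot \sqrt{x}$ — more directly, $\alpha_r x = (vx)^2/x \cdot \ldots$ — cleaner: from $x = v^2\alpha_r$ we get $\alpha_r x = (v\alpha_r)^2$, wait let me just say $x\alpha_r = v^2\alpha_r^2 = (v\alpha_r)^2$, so $x\alpha_r$ is a square in $L'$, i.e. $\sqrt{x\alpha_r} \in L'$. Applying the inductive hypothesis to the element $x\alpha_r \in K^\times$ gives $x\alpha_r = \alpha_1^{e_1} \cdots \alpha_{r-1}^{e_{r-1}} q^2$ for some $q \in K$, whence $x = \alpha_1^{e_1} \cdots \alpha_{r-1}^{e_{r-1}} \alpha_r^{-1} q^2$, which has the required form with exponent $-1$ on $\alpha_r$.

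I do not anticipate a serious obstacle here; the argument is the standard descent for multiquadratic extensions. The only point requiring mild care is the bookkeeping in the case $u = 0$: one must reduce to applying the inductive hypothesis to $x\alpha_r$ rather than $x$, and then divide back by $\alpha_r$. One should also note at the outset that it suffices to treat $x \in K^\times$, since $x = 0$ is immediate, and that the integers $e_i$ are only determined modulo $2$ (and $q$ only up to adjusting by the relevant $\alpha_i$), so no uniqueness is claimed or needed.
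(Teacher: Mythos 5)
Your proposal is correct and follows essentially the same route as the paper: induction on $r$, with the key step that writing $\sqrt{x}=u+v\sqrt{\alpha_r}$ over $L'$ and squaring forces $uv=0$, so that either $\sqrt{x}\in L'$ or $\sqrt{x\alpha_r}\in L'$, after which the inductive hypothesis applied to $x$ or $x\alpha_r\in K^\times$ finishes the argument (the paper records this as $\sqrt{x\cdot\alpha_r^{e}}\in L'$ for some $e\in\{0,1\}$ and takes $e_r=-e$). Your explicit handling of the degenerate case $\sqrt{\alpha_r}\in L'$ is a minor cosmetic difference, not a different method.
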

\begin{proof}
Let $M$ be a field of characteristic not $2$, and let $d \in M$ be a non-square. Let $x \in M$ and suppose $\sqrt{x} \in M(\sqrt{d})$.
Then $\sqrt{x} = y + z \sqrt{d}$ for some $y$, $z \in M$. Squaring, we deduce that $yz=0$. 
Thus $x=y^2$ or $x=dz^2$.

We now prove the lemma by induction on $r$.  The above establishes the case $r = 1$.  
Let $r \geq 2$, and let $x \in K$ satisfy $\sqrt{x} \in L$.  
Letting $M=K(\sqrt{\alpha_1},\dotsc,\sqrt{\alpha_{r-1}})$
we see that $x \in M$ and $\sqrt{x} \in M(\sqrt{\alpha_r})$.
Thus, by the above, $\sqrt{x} \in M$ or $\sqrt{x \alpha_r} \in M$.
In other words, 
\[
	\sqrt{x \cdot \alpha_r^e} \; \in\;  M=K(\sqrt{\alpha_1},\dotsc,\sqrt{\alpha_{r-1}})
\]
for some $e \in \{0,1\}$.
By the inductive hypothesis, there are $e_1,\dotsc,e_{r-1} \in \Z$ and $q \in K$ such that
\[
		x \cdot \alpha_r^e \; = \; \alpha_1^{e_1} \cdots \alpha_{r-1}^{e_{r-1}} q^2.
\]
The proof is complete on taking $e_r=-e$.
\end{proof}

\begin{lem} \label{lem:classnumber}
Let $\baseprime$ be an odd prime. Let $q \in \Omega_{\infty, \baseprime}$ satisfy
$q^2 \in V_n$. If the class number $h_n^+$ of $\Omega_{\level, \baseprime}^+$ is odd, then $q \in V_n$.
\end{lem}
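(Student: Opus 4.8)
The plan is to pin $q$ down inside $\Omega_{n,\ell}$, then to identify the finite quotient $\OO(\Omega_{n,\ell},S)^\times/V_n$ (with $S=\{\upsilon_\ell\}$) with $\OO(\Omega_{n,\ell})^\times/C_n$, whose order is $h_n^+$ by Kummer's theorem, and finally to use the parity hypothesis to kill the class of $q$.

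First I would check that $q\in\OO(\Omega_{n,\ell},S)^\times$. Since $q^2\in V_n\subseteq\Omega_{n,\ell}^\times$ while $q\in\Omega_{\infty,\ell}$, the field $\Omega_{n,\ell}(q)$ is a subextension of $\Omega_{\infty,\ell}/\Omega_{n,\ell}$ of degree at most $2$; but $\ell$ is odd, so this extension is pro-$\ell$ and admits no quadratic subextension, forcing $q\in\Omega_{n,\ell}$. Moreover, for any finite place $\upsilon\neq\upsilon_\ell$ of $\Omega_{n,\ell}$ we have $2\upsilon(q)=\upsilon(q^2)=0$ (as $q^2\in V_n$ is an $S$-unit), hence $\upsilon(q)=0$; so indeed $q\in\OO(\Omega_{n,\ell},S)^\times$.

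Next I would compute the index $[\OO(\Omega_{n,\ell},S)^\times:V_n]$. The valuation $\ord_{\lambda_n}$ restricts to a \emph{surjection} $V_n\to\Z$, since $1-\zeta_{\ell^n}\in V_n$ and $\lambda_n=(1-\zeta_{\ell^n})$ is principal. Consequently, given any $u\in\OO(\Omega_{n,\ell},S)^\times$, choosing $v\in V_n$ with $\ord_{\lambda_n}(v)=\ord_{\lambda_n}(u)$ makes $u/v$ a unit at every finite place, i.e.\ $u/v\in\OO(\Omega_{n,\ell})^\times$. Thus $\OO(\Omega_{n,\ell},S)^\times=V_n\cdot\OO(\Omega_{n,\ell})^\times$, and the second isomorphism theorem gives
\[
	\OO(\Omega_{n,\ell},S)^\times/V_n\;\cong\;\OO(\Omega_{n,\ell})^\times/\bigl(V_n\cap\OO(\Omega_{n,\ell})^\times\bigr)\;=\;\OO(\Omega_{n,\ell})^\times/C_n,
\]
which by Kummer's theorem is finite of order $h_n^+$.

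Finally, writing $A=\OO(\Omega_{n,\ell},S)^\times/V_n$, the image $\bar q$ of $q$ in $A$ satisfies $\bar q^{\,2}=1$; since $|A|=h_n^+$ is odd, $A$ has no element of order $2$, so $\bar q=1$ and $q\in V_n$. The one place requiring care is the index computation: once we exploit that $\lambda_n$ is principal it becomes a short quotient argument, and the remaining steps are formal, so I do not anticipate a serious obstacle.
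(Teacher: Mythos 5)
Your proposal is correct and follows essentially the same route as the paper: descend $q$ to $\Omega_{n,\ell}$ using that $\Omega_{\infty,\ell}/\Omega_{n,\ell}$ is pro-$\ell$, identify $\OO(\Omega_{n,\ell},\{\upsilon_\ell\})^\times/V_n$ with $\OO(\Omega_{n,\ell})^\times/C_n$ of order $h_n^+$ via Kummer's theorem, and conclude by oddness of the index. The only cosmetic difference is that you obtain the index identification from $\OO(\Omega_{n,\ell},\{\upsilon_\ell\})^\times=V_n\cdot\OO(\Omega_{n,\ell})^\times$ and the second isomorphism theorem, whereas the paper applies the snake lemma to the two valuation exact sequences; both hinge on the surjectivity of $\ord_{\lambda_n}$ on $V_n$.
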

\begin{proof}
Let $q \in \Omega_{\infty,\ell}$
satisfy $q^2 \in V_n \subset \Omega_{n,\ell}$.
As the extension $\Omega_{\infty,\ell}/\Omega_{n,\ell}$
is pro-$\ell$, we conclude that $q \in \Omega_{n,\ell}$.
However, $V_n \subseteq \OO(\Omega_{n,\ell},\{\upsilon_\ell\})^\times$,
where, as usual, $\upsilon_\ell$ denotes the prime above $\ell$.
Thus $q \in \OO(\Omega_{n,\ell},\{\upsilon_\ell\})^\times$.
We claim that
\[
	[\OO(\Omega_{n,\ell},\{\upsilon_\ell\})^\times : V_n] \; = \; h_n^\plus.
\]
The lemma follows immediately from the claim.
To prove the claim, consider the commutative diagram with exact rows
\[
	\begin{tikzcd}
		1 \arrow{r}   & C_n \arrow{r}{} \arrow[hookrightarrow]{d} & V_n \arrow{r}{\kappa} \arrow[hookrightarrow]{d} & \Z \arrow{r}{}
		\arrow{d} & 1\\
		1 \arrow{r}{} & \OO(\Omega_{n,\ell})^\times \arrow{r}{} & \OO(\Omega_{n,\ell},\{\upsilon_\ell\})^\times \arrow{r}{\kappa} & \Z \arrow{r}{} & 1
	\end{tikzcd}
\]
where $\kappa(\alpha)=\ord_{(1-\zeta)}(\alpha)$.
By the snake lemma, 
\[
	\OO(\Omega_{n,\ell},\{\upsilon_\ell\})^\times/V_n \; \cong
	\;
	\OO(\Omega_{n,\ell})^\times / C_n.
\]
Write $C_n^+ = C_n \cap \Omega_{n,\ell}^+$.
The aforementioned theorem of Kummer asserts that
\[
	[\OO(\Omega_{n,\ell})^\times : C_n] \; = \;
	[\OO(\Omega_{n,\ell}^\plus)^\times : C_n^\plus] \; = \; h_n^\plus;
\]
see, for example, \cite[Exercise 8.5]{Washington}
for the first equality, and \cite[Theorem 8.2]{Washington}
for the second. This proves the claim.
\end{proof}

\begin{lem} \label{lem:4torsion}
Let $K$ be a field of characteristic $\ne 2$. Let $f \in K[X]$ be a monic
separable polynomial of odd degree $d \ge 5$. Write $f= \prod_{i=1}^d (X-\alpha_i)$
with $\alpha_i \in \overline{K}$.
Let $C/K$ be a hyperelliptic curve given by $Y^2 = f(X)$ with Jacobian $J$. Then
\[
	K(J[2])=K(\alpha_1,\dotsc,\alpha_d),
	\qquad
	K(J[4])=K(J[2])\left(\big\{ \sqrt{\alpha_i - \alpha_j} \big\}_{1 \leq i,j \leq d} \right).
\]
\end{lem}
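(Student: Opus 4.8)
The plan is to prove the two equalities in turn, the first being elementary and the second resting on the explicit $2$-descent map for the multiplication-by-$2$ isogeny. For $K(J[2])$: since $d = \deg f$ is odd, $\infty$ is a Weierstrass point, and $J[2]$ is generated by the classes $e_i = [(\alpha_i,0)-\infty]$, $1 \le i \le d$, subject to the single relation $\sum_{i=1}^{d} e_i = 0$. These classes are visibly rational over $K(\alpha_1,\dots,\alpha_d)$, so $K(J[2]) \subseteq K(\alpha_1,\dots,\alpha_d)$; conversely, any $\sigma \in \Gal(\overline{K}/K)$ fixing $J[2]$ elementwise fixes each $e_i$, and since distinct points of a curve of positive genus are never linearly equivalent, $\sigma(e_i) = [(\sigma\alpha_i,0)-\infty] = e_i$ forces $\sigma\alpha_i = \alpha_i$. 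Hence $\sigma$ fixes $K(\alpha_1,\dots,\alpha_d)$, and the two fields agree.

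For the second equality, set $L = K(J[2])$, so that $J[2]$ is a constant $\Gal(\overline{L}/L)$-module. I would first record the structural fact that $L(J[4])/L$ is an abelian extension of exponent dividing $2$: each $\sigma \in \Gal(\overline{L}/L)$ acts trivially on $J[2]$ and on $J[4]/J[2]\cong J[2]$, hence is of the form $x \mapsto x + \phi_\sigma(x)$ with $\phi_\sigma : J[4]\to J[2]$ vanishing on $J[2]$, whence $\sigma^2 = \mathrm{id}$ and the $\sigma$'s commute. Concretely, for $e\in J[2]$ the fibre $[2]^{-1}(e)\subset J[4]$ is a torsor under $J[2]\subseteq J(L)$, so its field of definition is $L(Q)$ for any $Q$ with $2Q = e$, and $\Gal(\overline{L}/L(Q)) = \ker(\delta_e)$, where $\delta_e : \Gal(\overline{L}/L)\to J[2]$, $\sigma\mapsto \sigma Q - Q$, is the connecting homomorphism of $0\to J[2]\to J \xrightarrow{2} J\to 0$ at $e$ (a genuine homomorphism, independent of $Q$, because $J[2]$ is constant). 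Since $J[4] = \coprod_{e\in J[2]}[2]^{-1}(e)$ and $\delta$ is additive in $e$, it follows that $L(J[4])$ is the compositum of the fixed fields of $\ker(\delta_{e_i})$, $i = 1,\dots,d$, so it suffices to compute each $\delta_{e_i}$.

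Here I would invoke the classical explicit $2$-descent map for odd-degree hyperelliptic curves (due to Cassels, and worked out by Schaefer, Poonen--Schaefer and Stoll): using the Weil pairing to identify $H^1(L,J[2])$ with a subgroup of $A_L^\times/(A_L^\times)^2$, where $A_L = L[x]/(f(x))$, the class $\delta_{e_i}$ is represented by the function $x-\alpha_i$ evaluated at the Weierstrass points, with its zero at $(\alpha_i,0)$ replaced by $(x-\alpha_i)$ divided by the square of a local uniformiser, i.e.\ by $f'(\alpha_i) = \prod_{j\ne i}(\alpha_i-\alpha_j)$ modulo squares. Since $L \supseteq K(\alpha_1,\dots,\alpha_d)$, the algebra $A_L$ splits as $L^d$, so $\delta_{e_i}$ is the tuple with $j$-th entry $\alpha_j - \alpha_i$ for $j\ne i$ and $i$-th entry $f'(\alpha_i)$, and the fixed field of $\ker(\delta_{e_i})$ is
\[
	L\left(\left\{\sqrt{\alpha_j - \alpha_i} : j\ne i\right\},\ \sqrt{f'(\alpha_i)}\right).
\]
Taking the compositum over $i$ gives $L(J[4]) = L\left(\{\sqrt{\alpha_i-\alpha_j}\}_{1\le i,j\le d},\ \{\sqrt{f'(\alpha_i)}\}_i\right)$, and the extra square roots are superfluous: the field $L(\{\sqrt{\alpha_i-\alpha_j}\}_{1\le i,j\le d})$ contains $\zeta_4 = \sqrt{\alpha_i-\alpha_j}\,\sqrt{\alpha_j-\alpha_i}/(\alpha_i-\alpha_j)$, hence contains the product $\prod_{j\ne i}\sqrt{\alpha_i-\alpha_j}$, which is a square root of $f'(\alpha_i)$. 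This yields $L(J[4]) = L(\{\sqrt{\alpha_i-\alpha_j}\}_{1\le i,j\le d})$, as claimed.

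I expect the main obstacle to be the third step — correctly importing the $2$-descent formula for $\delta_{e_i}$, in particular verifying that the Weil-pairing self-duality of $J[2]$ turns the connecting map into the map ``evaluate $x-\alpha_i$ at the Weierstrass points'' and that the entry at the offending point $(\alpha_i,0)$ is $f'(\alpha_i)$ modulo squares; everything else is formal group theory and Kummer theory. An alternative that avoids the descent literature is to compute a halving $Q$ of each $e_i$ directly in Mumford coordinates and to check, using the doubling formulas, that $Q$ can be taken with coordinates in $L(\{\sqrt{\alpha_j-\alpha_i}\}_j)$, the various sign choices producing each $\sqrt{\alpha_j-\alpha_i}$; but organising this uniformly in $g$ amounts to the same computation.
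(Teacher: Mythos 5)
Your proposal is correct and takes essentially the same route as the paper: both parts rest on the Weierstrass $2$-torsion generators $[(\alpha_i,0)-\infty]$ and on the explicit $2$-descent ($x-T$, i.e.\ $X-\Theta$) map of Schaefer--Stoll, whose value on $[(\alpha_i,0)-\infty]$ is the tuple with entries $\alpha_i-\alpha_j$ for $j\neq i$ and $f'(\alpha_i)=\prod_{j\neq i}(\alpha_i-\alpha_j)$ in the $i$-th slot. The only difference is bookkeeping: the paper characterises $K(J[4])$ as the smallest extension $M$ of $K(J[2])$ over which these images become trivial in $\prod_{i=1}^d M^{\times}/(M^{\times})^2$ (using injectivity of the descent map over every such $M$), whereas you compute the connecting homomorphisms $\delta_{e_i}$ and the fixed fields of their kernels over $K(J[2])$ once and for all — the same descent input, with your explicit remark that the $\sqrt{f'(\alpha_i)}$ are redundant filling in what the paper dismisses as ``plainly''.
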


\begin{proof}
Write $\infty$ for the point at infinity on the given model for $C$.
The expression given for $K(J[2])$ is well-known; it may be seen by observing 
(see, for example \cite{Schaefer2descent}) that the 
	classes of the classes of degree $0$ divisors $[(\alpha_i,0)-\infty]$ with $i=1,\dotsc,d$ generate $J[2]$.

Yelton \cite[Theorem 1.2.2]{Yelton} gives a high-powered proof of the given expression
for $K(J[4])$. For the convenience
of the reader we give a more elementary argument. Let $L=K(J[2])$. The theory
of $2$-descent on hyperelliptic Jacobians furnishes, for any field $M \supseteq L$, an
	injective homomorphism
	\cite{Schaefer2descent}, \cite{Stoll2descent}
\[
	J(M)/2J(M) \; \hookrightarrow \; \prod_{i=1}^{d} M^*/(M^*)^2
\]
known as the $X-\Theta$-map. This in particular sends the $2$-torsion point $[(\alpha_i,0)-\infty]$
to 
\[
	\left((\alpha_i-\alpha_1) \, , \dotsc \, ,\, (\alpha_{i}-\alpha_{i-1}) \, ,\,
	\prod_{j \ne i} (\alpha_i-\alpha_j) \, , \,
	(\alpha_{i}-\alpha_{i+1}) \, , \, \dotsc \, , \, (\alpha_i-\alpha_d)\right).
\]
	The field $K(J[4])$ is the smallest extension of $M$ of $L$
	such that all the images of the $2$-torsion generators $[(\alpha_i,0)-\infty]$
	are trivial in $\prod_{i=1}^d M^*/(M^*)^2$. This is plainly the extension
\[
 	M=L
	\left(\big\{ \sqrt{\alpha_i - \alpha_j} \big\}_{1 \leq i,j \leq d} \right).
\]
\end{proof}

\begin{lem}\label{lem:prim}
	Let $p$ be a prime for which $2$ is a primitive root (i.e.\ $2$ is a generator for $\F_p^\times$). 
Let $G$ be a cyclic group of order $p$,
	and let $V$ be an $\F_2[G]$-module with $\dim_{\F_2}(V)=p-1$. Suppose that the
action of $G$ on $V\setminus \{0\}$ is free. Then $V$ is irreducible.
\end{lem}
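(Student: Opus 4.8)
The plan is to exploit the structure of the group algebra $\F_2[G]$. Write $G=\langle \sigma\rangle$ and identify $\F_2[G]=\F_2[X]/(X^p-1)$ via $\sigma\leftrightarrow X$; then an $\F_2[G]$-module structure on $V$ is the same data as a single $\F_2$-linear operator $\sigma\colon V\to V$ with $\sigma^p=1$. Over $\F_2$ we have $X^p-1=(X-1)\Phi_p(X)$, and the hypothesis that $2$ is a primitive root modulo $p$ is exactly the statement that $\Phi_p(X)$ is irreducible over $\F_2$: the irreducible factors of $\Phi_p$ over $\F_2$ all have degree equal to the multiplicative order of $2$ modulo $p$ (since the splitting field $\F_{2^d}$ contains a primitive $p$-th root of unity precisely when $d$ is a multiple of that order), and this order is $p-1$ by assumption. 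This is classical; I would cite a standard reference on cyclotomic polynomials over finite fields rather than reprove it.

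Next I would pin down the minimal polynomial $m(X)\in\F_2[X]$ of $\sigma$ acting on $V$. It divides $X^p-1=(X-1)\Phi_p(X)$, so by irreducibility of $\Phi_p$ it is one of $X-1$, $\Phi_p$, or $(X-1)\Phi_p$. The first is impossible: if $m=X-1$ then $\sigma$ acts trivially, so the nonidentity element $\sigma$ fixes every nonzero vector of $V$ (and $V\ne 0$ since $\dim_{\F_2}V=p-1\ge 2$), contradicting freeness of the $G$-action on $V\setminus\{0\}$. The third is impossible too: $\deg\big((X-1)\Phi_p\big)=p$, whereas the minimal polynomial of an operator on a $(p-1)$-dimensional space divides its characteristic polynomial and so has degree at most $p-1$. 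Hence $m=\Phi_p$.

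It follows that the $\F_2[X]$-action on $V$ factors through the field $F:=\F_2[X]/(\Phi_p(X))\cong\F_{2^{p-1}}$, so $V$ is an $F$-vector space, and comparing $\F_2$-dimensions gives $\dim_F V=1$. The $\F_2[G]$-submodules of $V$ are exactly its $F$-subspaces (the $G$-action generates the whole $F$-action over $\F_2$), and a $1$-dimensional $F$-space has no submodules besides $0$ and itself; therefore $V$ is irreducible. I expect no genuine obstacle here: the only inputs are the classical irreducibility of $\Phi_p$ over $\F_2$ and the routine identification of $G$-submodules with $F$-subspaces, and the freeness hypothesis is used only to discard the trivial-action case. (An alternative packaging: apply Maschke's theorem, valid since $p$ is odd, to decompose $V$ into simple $\F_2[G]$-modules — which are $\F_2$ of dimension $1$ and $F$ of dimension $p-1$ — and then the dimension constraint forces $V\cong\F_2^{\,p-1}$ or $V\cong F$, with the former excluded by freeness.)
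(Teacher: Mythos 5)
Your argument is correct, but it is a genuinely different route from the one in the paper. The paper's proof is a pure counting argument: for any $\F_2[G]$-submodule $W$ of dimension $d$, freeness of the $G$-action on $V\setminus\{0\}$ forces $W\setminus\{0\}$ to be a union of orbits of size exactly $p$, so $p \mid 2^d-1$; since $2$ has order $p-1$ modulo $p$ this gives $(p-1)\mid d$, whence $d=0$ or $d=p-1$. Your proof instead invokes the structure of the group algebra: the hypothesis on $2$ is recast as irreducibility of $\Phi_p$ over $\F_2$, the minimal polynomial of a generator $\sigma$ is pinned down to $\Phi_p$ (trivial action excluded by freeness, degree $p$ excluded by $\dim V=p-1$), and then $V$ becomes a one-dimensional vector space over $F=\F_2[X]/(\Phi_p)\cong\F_{2^{p-1}}$, whose $F$-subspaces are exactly the $\F_2[G]$-submodules. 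The trade-offs: the paper's argument is shorter and entirely self-contained, needing nothing beyond the orbit-stabilizer count and the order of $2$ modulo $p$, and it uses the full strength of freeness on every submodule; your argument needs the (classical, citable) fact about factorization of cyclotomic polynomials over finite fields, but it uses freeness only to exclude the trivial action -- so it actually establishes the slightly stronger statement that mere nontriviality (equivalently faithfulness, as $\lvert G\rvert=p$ is prime) of the action already forces $V\cong \F_{2^{p-1}}$, and it identifies the module explicitly, with freeness then recovered as a byproduct. Your Maschke-based repackaging is also valid since $p$ is odd.
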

\begin{proof}
Let $W$ be a $\F_2[G]$-submodule of $V$, and write $d=\dim_{\F_2}(W)$.
Since the action of $G$ on $V\setminus \{0\}$ is free,
	the set $W\setminus \{0\}$ consists of $G$-orbits,
	all having size $p$. However,
	$\# (W \setminus \{0\})=2^{d}-1$,
and so $p \mid (2^d-1)$. By assumption, $2$ is a primitive root modulo $p$,
therefore $(p-1) \mid d$. Since $W$ is an $\F_2$-subspace of $V$ which has dimension $p-1$,
we see that $W=0$ or $W=V$.
\end{proof}

\begin{lem} \label{lem:evenisog}
Let $\ell=2p+1$, where $\ell$ and $p$ are odd primes. Suppose $2$ is a primitive root modulo $p$.
Let $g=(\ell-3)/4$. Let $n \ge 2$ and let
$D_n/\Q_{n-1,\ell}$ be the hyperelliptic curve defined in Section~\ref{sec:hyp}.
Let $A/\Q_{\infty,\ell}$ be an abelian variety and let
$\phi : J(D_n) \rightarrow A$ be an isogeny defined over $\Q_{\infty,\ell}$.
Then $\phi=2^r \phi_{\mathrm{odd}}$ where $\phi_{\mathrm{odd}} : J(D_n) \rightarrow A$
is an isogeny of odd degree.
\end{lem}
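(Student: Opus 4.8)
The plan is to show that the only prime that can appear in the degree of an isogeny $\phi: J(D_n)\to A$ is $2$, together with odd primes, and more precisely that the $2$-part of any such isogeny is forced to be multiplication by a power of $2$. Concretely, it suffices to prove that $J(D_n)$ admits no nontrivial $G_{\Q_{\infty,\ell}}$-stable subgroup scheme of order $2$ (equivalently, that $J(D_n)[2]$ is irreducible as a $\Gal(\overline{\Q}/\Q_{\infty,\ell})$-module), since then any isogeny whose kernel has nontrivial $2$-part must contain all of $J(D_n)[2]$, and factoring out $J(D_n)[2]\cong J(D_n)$ reduces the $2$-adic valuation of the degree by $2g$ (here $\dim J(D_n)=g$). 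Iterating, one peels off a power of $2$, i.e.\ $\phi=2^r\phi_{\mathrm{odd}}$ with $\deg\phi_{\mathrm{odd}}$ odd. (One should be a little careful to organize the induction so that the ``odd degree'' factor genuinely has odd degree; this is the standard argument that an isogeny factors through multiplication by $2$ whenever its kernel contains the full $2$-torsion.)

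The main work is therefore the irreducibility of $V:=J(D_n)[2]$ as an $\F_2$-representation of $\Gal(\overline{\Q}/\Q_{\infty,\ell})$. Here is where the hypotheses $\ell=2p+1$, $p$ prime, $2$ a primitive root mod $p$, and $g=(\ell-3)/4$ enter. First I would compute $\dim_{\F_2}V$: the hyperelliptic polynomial of $D_n$ has degree $2g+1=\ell-2=2p-1$ (odd, by \eqref{eqn:gk} with $k=1$, $\ell\equiv3\pmod4$), so by Lemma~\ref{lem:4torsion} the Galois action on the $2g=2p-2=p-1$... wait --- more precisely $J[2]$ has $\F_2$-dimension $2g$, but the relevant module here is the set of roots $\{\eta_j^\sigma\}$ permuted by Galois. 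By Lemma~\ref{lem:4torsion}, $\Q_{n-1,\ell}(J[2])$ is the splitting field of the hyperelliptic polynomial, whose roots form a single orbit of size $(\ell-1)/2=p$ under $G_n=\Gal(\Omega_{n,\ell}^+/\Q_{n-1,\ell})$ (the case $k=1$ of Section~\ref{sec:hyp}), and this action is free by Lemma~\ref{lem:etadef}. The $\F_2$-span of the corresponding $2$-torsion generators $[(\alpha_i,0)-\infty]$ inside $J[2]$ is, after quotienting by the single relation $\sum_i[(\alpha_i,0)-\infty]=0$ (odd degree, so no extra relations from the model at infinity), a $G_n$-module of dimension $(\ell-1)/2-1=p-1$ on which the cyclic group $\langle\mu\rangle$ of order $p$ (a generator of $G_n$ of order $p$) acts with $V\setminus\{0\}$ a union of free orbits. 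Now apply Lemma~\ref{lem:prim}: since $2$ is a primitive root mod $p$, $V$ is irreducible as an $\F_2[\langle\mu\rangle]$-module, hence a fortiori irreducible as a $\Gal(\overline{\Q}/\Q_{\infty,\ell})$-module.

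The step I expect to be the main obstacle is the bookkeeping connecting Lemma~\ref{lem:prim}'s abstract statement to $J(D_n)[2]$: one must check that the permutation module on the $p$ roots modulo the all-ones relation really is the module $V$ of dimension $p-1$ with free $\langle\mu\rangle$-action (freeness of the action on $V\setminus\{0\}$, not merely on the basis, needs the orbit-structure argument of Lemma~\ref{lem:prim}'s proof itself to be invoked at the level of $J[2]$, or a direct check that no nonzero fixed vector exists), and that the order-$p$ element $\mu\in G_n$ indeed generates a subgroup acting this way --- this uses that $(\ell-1)/2=p$ is prime, so $G_n$ itself is cyclic of prime order $p$ and every nonidentity element generates it. Once $V$ is irreducible over the smaller group $\langle\mu\rangle\subseteq G_n$, irreducibility over the full Galois group is automatic, and the factorization $\phi=2^r\phi_{\mathrm{odd}}$ follows formally. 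I would also note that $D_n$ is genuinely defined over $\Q_{n-1,\ell}\subset\Q_{\infty,\ell}$ and has the stated good reduction by Lemma~\ref{lem:hyperelliptic}, so that the isogeny statement over $\Q_{\infty,\ell}$ makes sense.
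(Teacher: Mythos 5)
Your proposal is correct and follows essentially the same route as the paper: reduce everything to the irreducibility of $J(D_n)[2]$ as a module for the cyclic group of order $p$ via Lemma~\ref{lem:prim}, with the needed freeness on $V\setminus\{0\}$ coming from the absence of nonzero Galois-fixed $2$-torsion (the paper gets this from the irreducibility over $\Q_{\infty,\ell}$ of the degree-$p$ hyperelliptic polynomial, while you propose the equivalent direct check in the permutation module modulo the all-ones relation), and then iteratively peel off a factor of $2$ whenever the kernel meets $J[2]$ nontrivially. The only blemishes are cosmetic: the hyperelliptic polynomial has degree $2g+1=(\ell-1)/2=p$, not $\ell-2$ (your later count of $p$ roots and dimension $p-1$ is the correct one), and \lq\lq no stable subgroup of order $2$\rq\rq\ is a priori weaker than the full irreducibility that you actually prove and that the argument requires.
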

We remark if $\ell$ and $p$ are primes with $\ell =2p+1$ then
$p$ is called a Sophie-Germain prime, and $\ell$ is called
as safe prime.
\begin{proof}[Proof of Lemma~\ref{lem:evenisog}]
Note that, in the notation of Section~\ref{sec:hyp},
$k=1$, and the hyperelliptic polynomial for $D_n$ has odd degree $2g+1=(\ell-1)/2=p$,
and consists of a single orbit under action of $G_n=\Gal(\Omega_n^+/\Q_{n-1,\ell})$:
\[
		D_n \; : \; y^2 \; = \; \prod_{\sigma \in G_n} (X-\eta_1^\sigma),
	\qquad \eta_1=\zeta_{\ell^n}+\zeta_{\ell^n}^{-1}.
\]
In particular, the hyperelliptic polynomial is irreducible
over $\Q_{\infty,\ell}$. 
It follows from this (e.g. \cite[Lemma 4.3]{Stoll2descent})
that $J(\Q_{\infty,\ell})[2]=0$,
where $J$ denotes $J(D_n)$ for convenience.
We note, by Lemma~\ref{lem:4torsion},
that $\Q_{\infty,\ell}(J[2])=\Q_{\infty,\ell}(\eta_1)=\Omega_{\infty,\ell}^+$.
We consider the action of $G_{\infty}:=\Gal(\Omega_{\infty,\ell}^+/\Q_{\infty,\ell})$
on $J[2]$. The group $G_\infty$
is cyclic of order $(\ell-1)/2=p$. Any element fixed by this action
belongs to $J(\Q_{\infty,\ell})[2]=0$. Thus $G_{\infty}$ acts freely on $V\setminus \{0\}$,
where $V:=J[2]$.

Now let $\phi : J \rightarrow A$ be an isogeny defined over $\Q_{\infty,\ell}$.
Then $W:=\ker(\phi) \cap J[2]$ is a subgroup of $V$ stable under the
action of $G_\infty$, and therefore an $\F_2[G_\infty]$-submodule of the $\F_2[G_\infty]$-module $V$.
Observe that $\dim_{\F_2}(V)=2g=p-1$.
By hypothesis, $2$ is a primitive root modulo $p$.
We apply Lemma~\ref{lem:prim} to deduce that $W=0$ or $W=V$.
Therefore, either $\phi$ already has odd degree,
or $J[2] \subseteq \ker(\phi)$.
In the latter case, observe that
$\phi=2 \phi^\prime$ where $\phi^\prime : J \rightarrow A$
is an isogeny defined over $\Q_{\infty,\ell}$ of degree $\deg(\phi)/2^{2g}$. As $\phi$ has finite degree,
by repeating the argument we eventually arrive at $\phi=2^r \phi_{\mathrm{odd}}$.
\end{proof}


\begin{lem} \label{lem:oddisog}
Let $\ell=2p+1$, where $\ell$ and $p$ are odd primes. Suppose $2$ is a primitive root modulo $p$.
Suppose that the class number $h_n^+$ of $\Omega_{n,\ell}^+$ is odd for all $n$.
Let $g=(\ell-3)/4$. For $n \ge 2$ let
$D_n/\Q_{n-1,\ell}$ be the genus $g$ hyperelliptic curve defined in Section~\ref{sec:hyp}.
Let $n > m$ be sufficiently large.
Then there are no isogenies $J(D_n) \to J(D_m)$ defined over $\Q_{\infty,\ell}$.
\end{lem}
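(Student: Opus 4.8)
The plan is to mimic the proof of Lemma~\ref{lem:noniso}, but with the cross-ratio (an isomorphism invariant) replaced by the $4$-torsion field of the Jacobian (an isogeny invariant), and with Kummer's theorem used to handle the square-class ambiguity that necessarily arises. Suppose for contradiction that $\phi\colon J(D_n)\to J(D_m)$ is an isogeny defined over $\Q_{\infty,\ell}$. By Lemma~\ref{lem:evenisog}, applied with $A=J(D_m)$, we may write $\phi=2^r\phi_{\mathrm{odd}}$ with $\phi_{\mathrm{odd}}\colon J(D_n)\to J(D_m)$ an isogeny of odd degree defined over $\Q_{\infty,\ell}$; hence it suffices to derive a contradiction from the existence of an odd-degree isogeny $\psi\colon J(D_n)\to J(D_m)$ over $\Q_{\infty,\ell}$. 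As $\ker\psi$ has odd order, $\psi$ restricts to an isomorphism $J(D_n)[4]\cong J(D_m)[4]$ of $\Gal(\overline{\Q}/\Q_{\infty,\ell})$-modules, so that
\[
	\Q_{\infty,\ell}\bigl(J(D_n)[4]\bigr)\;=\;\Q_{\infty,\ell}\bigl(J(D_m)[4]\bigr).
\]
We shall show this equality of fields is impossible once $n$ is large.

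The next step is to describe both fields explicitly via Lemma~\ref{lem:4torsion}. Since $k=1$ here, the hyperelliptic polynomial of $D_n$ is the monic separable polynomial $f_n(X)=\prod_{\sigma\in G_n}(X-\eta_1^\sigma)$ of odd degree $p=(\ell-1)/2\ge 5$, whose roots form a single $G_n$-orbit $u_i^{(n)}=\zeta_{\ell^n}^{a^{i-1}}+\zeta_{\ell^n}^{-a^{i-1}}$, $1\le i\le p$, where $a\in\Z_\ell^\times$ is a fixed element of order $\ell-1$. Because any element of $\cZ_n^+$ generates $\Omega_{n,\ell}^+$ over $\Q$, we obtain $\Q_{\infty,\ell}(J(D_n)[2])=\Q_{\infty,\ell}(u_1^{(n)},\dots,u_p^{(n)})=\Q_{\infty,\ell}\cdot\Omega_{n,\ell}^+=\Omega_{\infty,\ell}^+$; the last equality holds because $\Omega_{N,\ell}^+=\Q_{N-1,\ell}\cdot\Omega_{n,\ell}^+$ for all $N\ge n$ (the subextensions $\Omega_{n,\ell}^+/\Q_{n-1,\ell}$ and $\Q_{N-1,\ell}/\Q_{n-1,\ell}$ being linearly disjoint, of degree prime to $\ell$ and a power of $\ell$ respectively). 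Lemma~\ref{lem:4torsion} then yields
\[
	\Q_{\infty,\ell}\bigl(J(D_n)[4]\bigr)\;=\;\Omega_{\infty,\ell}^+\bigl(\{\sqrt{u_i^{(n)}-u_j^{(n)}}\,\}_{1\le i,j\le p}\bigr),
\]
and likewise for $D_m$ with roots $u_i^{(m)}=\zeta_{\ell^m}^{a^{i-1}}+\zeta_{\ell^m}^{-a^{i-1}}$. By the identity \eqref{eqn:alphabeta} every such difference lies in $V_n$ (respectively $V_m$); in particular $u_1^{(n)}-u_2^{(n)}=\zeta_{\ell^n}^{-1}(1-\zeta_{\ell^n}^{1+a})(1-\zeta_{\ell^n}^{1-a})$.

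The crucial step, which I expect to be the main obstacle, is to convert the field equality into a concrete relation in $V_n$ that Corollary~\ref{cor:quo3} forbids. Since $\sqrt{u_1^{(n)}-u_2^{(n)}}$ lies in $\Q_{\infty,\ell}(J(D_n)[4])=\Q_{\infty,\ell}(J(D_m)[4])=\Omega_{\infty,\ell}^+\bigl(\{\sqrt{u_i^{(m)}-u_j^{(m)}}\,\}\bigr)$, Lemma~\ref{lem:fieldsquareroots}, applied over $K=\Omega_{\infty,\ell}^+$ with the $\alpha$'s taken to be the differences $u_i^{(m)}-u_j^{(m)}$, produces integers $e_{ij}$ and an element $q\in\Omega_{\infty,\ell}^+$ with
\[
	u_1^{(n)}-u_2^{(n)}\;=\;\Bigl(\,\prod_{i\ne j}\bigl(u_i^{(m)}-u_j^{(m)}\bigr)^{e_{ij}}\Bigr)\,q^2.
\]
Here $q^2$ equals $u_1^{(n)}-u_2^{(n)}\in V_n$ divided by $\prod_{i\ne j}(u_i^{(m)}-u_j^{(m)})^{e_{ij}}\in V_m\subseteq V_{n-1}\subseteq V_n$, so $q^2\in V_n$; the hypothesis that $h_n^+$ is odd then lets us invoke Lemma~\ref{lem:classnumber} to conclude $q\in V_n$.

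Finally, the displayed relation shows that, after absorbing the factor $\zeta_{\ell^n}^{-1}$,
\[
	(1-\zeta_{\ell^n}^{1+a})(1-\zeta_{\ell^n}^{1-a})\;\in\;\langle\,\pm\zeta_{\ell^n},\,V_{n-1},\,V_n^2\,\rangle,
\]
since $\prod_{i\ne j}(u_i^{(m)}-u_j^{(m)})^{e_{ij}}\in V_{n-1}$ and $q^2\in V_n^2$. On the other hand, Corollary~\ref{cor:quo3} applied with $r=2$, $a_1=1+a$, $a_2=1-a$ and $c_1=c_2=1$ — legitimate because $c_1$ is odd, $1+a\not\equiv 0\pmod\ell$ (as $a$ has order $\ell-1\ge 10$ in $\Z_\ell^\times$, so its reduction mod $\ell$ is a primitive root, whereas $-1$ has order $2$), and $1+a\not\equiv\pm(1-a)\pmod{\ell^n}$ for $\ell$ odd — guarantees that this membership fails for all sufficiently large $n$. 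This contradiction, with a bound on $n$ that does not depend on $m$, establishes the lemma.
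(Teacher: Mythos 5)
Your proposal is correct and follows essentially the same route as the paper's own proof: reduce to an odd-degree isogeny via Lemma~\ref{lem:evenisog}, deduce equality of the $4$-torsion fields, describe them by Lemma~\ref{lem:4torsion}, extract a relation with Lemma~\ref{lem:fieldsquareroots}, promote $q$ to $V_n$ by Lemma~\ref{lem:classnumber} using the oddness of $h_n^+$, and contradict Corollary~\ref{cor:quo3} with the exponents $1\pm a$. The verifications you supply (that $q^2\in V_n$, that $1+a\not\equiv 0\pmod{\ell}$, and that $1+a\not\equiv\pm(1-a)\pmod{\ell^n}$) are exactly the checks the paper's argument relies on.
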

The assumption that $h_n^+$ is odd for all $n$ may seem at first sight very
restrictive. However, it is conjectured \cite{BPR} that $h_{n+1}^+=h_n^+$ for all
but finitely many pairs $(\ell,n)$. Moreover, Washington \cite{Washington2} has shown
that $\ord_p(h_n)$ remains bounded as $n \rightarrow \infty$, for any fixed prime $p$.
\begin{proof}[Proof of Lemma~\ref{lem:oddisog}]
Write $J_n$ for $J(D_n)$.
Suppose there is an isogeny $\phi : J_n \rightarrow J_m$ defined over $\Q_{\infty,\ell}$.
By Lemma~\ref{lem:evenisog} we may suppose that $\phi$ has odd degree,
and so $\ker(\phi) \cap J_n[4]=0$.
Thus $\phi$ restricted to $J_n[4]$ induces an isomorphism
of $\Gal(\overline{\Q}/\Q_{\infty,\ell})$-modules
$J_n[4] \cong J_m[4]$. In particular,
$\Q_{\infty,\ell}(J_n[4])=\Q_{\infty,\ell}(J_m[4])$.
As in the proof of Lemma~\ref{lem:evenisog} we have
$\Q_{\infty,\ell}(J_n[2])=\Q_{\infty,\ell}(J_m[2])=\Omega_{\infty,\ell}^+$.
Thus, by Lemma \ref{lem:4torsion}, the
equality 
$\Q_{\infty,\ell}(J_n[4])=\Q_{\infty,\ell}(J_m[4])$
may be rewritten as 
\begin{equation*}
    \Omega_{\infty,\baseprime}^+ \Big(\big\{ \sqrt{\realgen_{n, i} - \realgen_{n, j} } \big\}_{1 \leq i, j \leq (\baseprime-1)/2 } \Big) 
	\; =\;  \Omega_{\infty,\baseprime}^+ \Big(\big\{ \sqrt{\realgen_{m, i} - \realgen_{m, j} } \big\}_{1 \leq i, j \leq (\baseprime-1)/2} \Big)
\end{equation*}
where $\realgen_{r, i} := \mu_r^{i-1} (\zeta_{\baseprime^r} + \zeta_{\baseprime^r}^{-1})$ where $\mu_r$ is a cyclic generator of $G_r$.
This, in particular, implies that
\begin{equation*}
    \sqrt{\realgen_{n, 2} - \realgen_{n, 1}} \; \in \; \Omega_{\infty,\baseprime}^+ \Big(\big\{ \sqrt{\realgen_{m, i} - \realgen_{m, j} } \big\}_{1 \leq i, j \leq (\baseprime-1)/2} \Big)
\end{equation*}
We apply Lemma \ref{lem:fieldsquareroots} to obtain
\begin{align*}
    \realgen_{n, 2} - \realgen_{n, 1}
    =
    \pm
    &\prod_{1 \leq i< j \leq \frac{\baseprime - 1}{2}}
    (\realgen_{m, i} - \realgen_{m, j})
    ^{e_{i,j}}  \cdot q^2
\end{align*}
for some integers $e_{i,j} \in \mathbb{Z}$ and $q \in \Omega_{\infty, \baseprime}^+$.   
By Lemma \ref{lem:classnumber}, we have $q \in V_n$.  
The generator $\mu_n$ of $G_n$ is given by $\mu_n(\zeta_{\ell^n}+\zeta_{\ell^n}^{-1})=\zeta_{\ell^n}^a+\zeta_{\ell^n}^{-a}$
where $a \in \Z_\ell^\times$ has order $(\ell-1)$.
Note
\begin{equation*}
    \realgen_{n, 2} - \realgen_{n, 1} \; = \;
	\zeta_{\baseprime^n}^a + \zeta_{\baseprime^n}^{-a} - \zeta_{\baseprime^n} - \zeta_{\baseprime^n}^{-1} 
	\; = \; \zeta_{\baseprime^n}^{-a} (1 - \zeta_{\baseprime^n}^{a+1})(1 - \zeta_{\baseprime^n}^{a-1}).
\end{equation*}
Thus,
\begin{equation*}
    (1 - \zeta_{\baseprime^n}^{a+1}) (1 - \zeta_{\baseprime^n}^{a-1} )  \; \in \; \langle \pm \zeta_{\baseprime^n}, V_m, V_n^2 \rangle .
\end{equation*}
However, $(a+1) \not \equiv \pm (a-1) \pmod{\ell}$.
Now Corollary~\ref{cor:quo3} gives a contradiction.
\end{proof}


\subsection*{Proof of Theorem~\ref{thm:Shafarevich3}}
Let $\ell \ge 11$. Let
\[
	g \; =\; \lfloor (\ell-3)/4 \rfloor \; = \;
	\begin{cases}
		(\ell-3)/4 & \text{$\ell \equiv 3 \pmod{4}$}\\
		(\ell-5)/4 & \text{$\ell \equiv 1 \pmod{4}$}.
	\end{cases}
\]
Thus $g$ satisfies \eqref{eqn:genus}. Let $D_n$ be as in Section~\ref{sec:hyp}.
By Lemma~\ref{lem:hyperelliptic}, the hyperelliptic curve $D_n/\Q_{n-1,\ell}$ has genus $g$,
and good reduction away from $\{\upsilon_2,\upsilon_\ell\}$. Moreover, by Lemma~\ref{lem:noniso},
we have $D_n$ and $D_m$ are non-isomorphic, even over $\overline{\Q}$, for $n>m$ sufficiently
large.

Now suppose
\begin{enumerate}[(i)]
\item $\ell=2p+1$ where $p$ is also an odd prime;
\item $2$ as a primitive root modulo $p$.
\end{enumerate}
It then follows from Lemma~\ref{lem:oddisog} that $J(D_n)$
and $J(D_m)$ are non-isogenous over $\Q_{\infty,\ell}$ provided
$h_n^+$ is odd for all $n$, where $h_n^+$ denotes the 
class number of $\Omega_{n,\ell}^+$. Write $h_n$ for the class number
of $\Omega_{n,\ell}$. 
It is known thanks to the work of Estes \cite{Estes} that $h_1$
is odd for all primes $\ell$ satisfying (i) and (ii) (a simplified
proof of this result is given Stevenhagen \cite[Corollary 2.3]{Stevenhagen}).
Moreover,
Ichimura and Nakajima \cite{IchimuraNakajima} show,
for primes $\ell \le 509$, that the ratio
$h_n/h_1$ is odd for all $n$.
The primes $11 \le \ell \le 509$ satisfying both (i) and (ii) are
$11$, $23$, $59$, $107$, $167$, $263$, $347$, $359$.
Thus for these primes $h_n$ is odd for all $n$.
As $h_n^+ \mid h_n$ (see for example \cite[Theorem 4.10]{Washington}), we know for these primes that $h_n^+$ is odd
for all $n$.
This completes the proof. \qed

\bigskip

\noindent \textbf{Remark.}
\begin{itemize}
\item A key step in our proof of Theorem~\ref{thm:Shafarevich3}
is showing that $J(D_n)[2]$ is irreducible as an $\F_2[G_\infty]$-module
whenever $\ell=2p+1$ where $p$ is a prime having $2$ as a primitive
root. It can be shown for all other $\ell$ that the $\F_2[G_\infty]$-module
$J(D_n)[2]$ is in fact reducible. 
\item Another key step is the argument in the proof of Lemma~\ref{lem:oddisog}
showing that for $n>m$ sufficiently large,
the Jacobians $J(D_n)$ and $J(D_m)$ are not related via odd degree
		isogenies defined over $\Q_{\infty,\ell}$.
This step
can be made to work, with very minor modifications
to the argument, for all $\ell \ge 11$, and all 
choices of genus $g$ given in \eqref{eqn:genus}.
\end{itemize}

\bibliographystyle{abbrv}
\bibliography{samir}

\end{document}